\theoremstyle{plain}
\newtheorem{proposition}{Proposition}[section]
\newtheorem{theorem}[proposition]{Theorem}
\newtheorem{lemma}[proposition]{Lemma}
\newtheorem{corollary}[proposition]{Corollary}
\theoremstyle{definition}
\newtheorem{example}[proposition]{Example}
\newtheorem{definition}[proposition]{Definition}
\newtheorem{observation}[proposition]{Observation}
\theoremstyle{remark}
\newtheorem{remark}[proposition]{Remark}
\newtheorem{notation}[proposition]{Notation}
\numberwithin{equation}{section}
\DeclareMathOperator{\diam}{diam}
\DeclareMathOperator{\Cay}{Cay}
\DeclareMathOperator{\id}{id}
\newcommand{\vertiii}[1]{{\left\vert\kern-0.25ex\left\vert\kern-0.25ex\left\vert #1 
    \right\vert\kern-0.25ex\right\vert\kern-0.25ex\right\vert}}
\begin{document}

\title{Anosov representations over closed subflows}
\author{Tianqi Wang}
\address{National University of Singapore}
\curraddr{}
\email{twang@u.nus.edu}
\thanks{Wang was partially supported by the NUS-MOE grant R-146-000-270-133}

\date{}

\dedicatory{}

\begin{abstract}
We introduce a generalization of the notion of Anosov representations by restricting to invariant closed geodesic subflows. Examples of such representations include many non-discrete representations with good geometric properties, such as primitive-stable representations. We give several equivalent characterizations of this type of representations and prove some properties analogous to the classical Anosov representations, such as stability, the Cartan property and regularity properties of the limit maps.
\end{abstract}

\maketitle

\section{Introduction}
When $G$ is a semisimple Lie group of rank $1$, we say that a representation $\rho$ of a finitely generated group $\Gamma$ into $G$ is \emph{convex-cocompact} if it acts properly discontinuously and cocompactly on some nonempty closed convex subset of the symmetric space of $G$. Equivalently, $\rho$ is a convex-cocompact representation if and only if the orbit map from $\Gamma$ to the symmetric space of $G$, induced by $\rho$, is a quasi-isometric embedding. Convex-cocompact representations are stable, which means the set of such representations is open in $\mathrm{Hom}(\Gamma,G)$. The notion of Anosov representations of a finitely generated group $\Gamma$ into a semisimple Lie group, introduced by Labourie \cite{Lab}, generalizes this notion to higher rank Lie groups. Anosov representations are also stable and induce quasi-isometric embeddings of $\Gamma$ to the symmetric spaces of the Lie groups. Since then, the theory of Anosov representations was developed further by many other researchers, such as Guichard--Wienhard \cite{GW}, Kapovich--Leeb--Porti \cite{KLP1}, \cite{KLP2}, \cite{KLP3}, Gu\'eritaud--Guichard--Kassel--Wienhard \cite{GGKW}, and Bochi--Potrie--Sambarino \cite{BPS}.\\

There are many extensions of the notion of Anosov representations, for example,  Kapovich and Leeb's work on relatively Morse and relatively asymptotically embedded representations \cite{KL} and relatively dominated representations defined by Zhu \cite{Zhu}. A common feature of these notions is that all these types of representations are discrete with finite kernels.\\

On the other hand, there are many representations which are not discrete, but still have good geometric properties. For example, Minsky \cite{Min} introduced the concept of \emph{primitive-stable representations}. They are representations of the free group $F_n$ on $n$ generators into $\mathrm{PSL}(2,\mathbb{C})$, where the induced orbit maps from the Cayley graphs to $\mathbb{H}^3$ restricted to primitive geodesics, are quasi-isometric embedding. We view the set $\mathcal{PS}(F_n,\mathrm{PSL}(2,\mathbb{C}))$ of the conjugacy classes of primitive-stable representations as a subset of the character variety $\chi(F_n, \mathrm{PSL}(2,\mathbb{C}))$. There is a natural $\mathrm{Out}(F_n)$-action on $\chi(F_n, \mathrm{PSL}(2,\mathbb{C}))$ by pre-composition since the set of primitive elements in $F_n$ is $\mathrm{Out}(F_n)$-invariant. Minsky \cite{Min} proved that $\mathcal{PS}(F_n,\mathrm{PSL}(2,\mathbb{C}))$ is open in $\chi(F_n, \mathrm{PSL}(2,\mathbb{C}))$, strictly larger than the set of the conjugacy classes of convex-cocompact representations, and $\mathrm{Out}(F_n)$ acts on $\mathcal{PS}(F_n,\mathrm{PSL}(2,\mathbb{C}))$ properly discontinuously.\\

Guéritaud--Guichard--Kassel--Wienhard \cite{GGKW} generalized the notion of primitive-stable  representations of free groups into higher a rank semisimple Lie groups $G$ (see Definition \ref{defps}). In this generalized case, a recent work by Kim--Kim \cite{KK} showed that the set of primitive-stable representations is open in $\chi(F_n, G)$, and the $\mathrm{Out}(F_n)$ action by pre-composition is properly discontinuous. The proof is a generalization of Minsky's proof, together with an application of a local to global theorem for Morse quasi-geodesics from Kapovich--Leeb--Porti \cite{KLP0}.\\

The notion of \emph{directed Anosov representations} introduced by Kim--Tan--Zhang \cite{KTZ} is also an example of non-discrete, but geometrically well-behaved representations. They used such representations to construct some new examples of primitive-stable representations. Informally, a directed Anosov representation of a hyperbolic group $\Gamma$ into a semisimple Lie group $G$ requires that it has the dominated property along some geodesic rays in $\Gamma$ (see Definition \ref{defda}).\\

Another type of examples are \emph{pleated surfaces},  which were first introduced by Thurston \cite{Thu}. Following from Bonahon \cite{Bonahon}, a pleated surface can be understood as a certain way to bend the universal covering of a hyperbolic surface $S$ in $\mathbb{H}^3$, along a maximal geodesic lamination $\lambda$, that is, a maximal collection of pairwise non-intersecting simple closed geodesics (see Section \ref{pleatedsurfaceeg}). Then a pleated surface gives a representation of the fundamental group $\pi_1(S)$ into $\mathrm{PSL}(2,\mathbb{C})$. Such representations are not necessarily discrete, but they are geometrically meaningful, because they contain a lot of information of the pleated surfaces.\\

The aim of this paper is to establish a common framework to study such non-discrete representations using tools from the theory of Anosov representations.\\

We assume that $\Gamma$ is a hyperbolic group with a finite, symmetric generating set $S$. If $\Gamma$ acts properly discontinuously and cocompactly on a topological space $X$ and commutes with a flow $\Phi$, then every linear representation of $\Gamma$ gives rise to a flat vector bundle over $X/\Gamma$ with a linear lift of $\Phi$ and one can define a corresponding dominated splitting condition. Anosov representations correspond to the case when $X$ is the geodesic flow of $\Gamma$. In this paper, we consider the more general case when $X$ is a closed, $\Gamma$-invariant subset of the geodesic flow.\\

More concretely, Gromov \cite{Gromov} (see also Mathéus \cite{Matheus}, Champetier \cite{Cham} and Mineyev \cite{Mineyev}) defined a properly discontinuous and cocompact $\Gamma$-action on $\partial^{(2)}\Gamma\times \mathbb{R}$, which commutes with the obvious $\mathbb{R}$-action on $\partial^{(2)}\Gamma\times \mathbb{R}$. One can then define  $U\Gamma=(\partial^{(2)}\Gamma\times \mathbb{R} )/\Gamma $, and the $\mathbb{R}$-action descends to a flow $\phi$ on $U\Gamma$. Let $P$ be a closed, $\Gamma$-invariant subset of $\partial^{(2)}\Gamma$. The geodesic subflow space in $U\Gamma$ associated to $P$ is given by $S_P= (P\times \mathbb{R})/\Gamma$, which is compact and $\phi$-invariant.\\

Now, let $\rho$ be a representation of $\Gamma$ into $\mathrm{GL}(d,\mathbb{R})$. The geodesic flow $\phi$ on $U\Gamma$ lifts to a parallel flow $\psi$ on the flat $\mathbb{R}$-vector bundle $E_\rho=U\Gamma\times_\rho \mathbb{R}^d$. We say that $\rho$ is \emph{$k$-Anosov over $S_P$} if there exist constants $C,\lambda>0$, such that $$\dfrac{\sigma_{d-k+1}(\psi^t_q)}{\sigma_{d-k}(\psi^t_q)}\leqslant Ce^{-\lambda t},$$ for any $q \in{S_P}$ and $t\in \mathbb{R}_+$, where $\sigma_i(\cdot)$ denotes the $i$-th singular value with respect to some (hence any) given Riemannian metric on $E_\rho$. In the case when $P=\partial^{(2)}\Gamma$, then $S_P= U\Gamma$, and we recover the classical definition of a $k$-Anosov representation as in \cite{BPS}.\\

We say that $E_\rho|_{S_P}=E^s_{\rho}\oplus E_\rho^u$ is a \emph{$k$-dominated splitting over $S_P$}, if $E_\rho^s$ and $E_\rho^u$ are $\psi$-invariant subbundles of $E_\rho|_{S_P}$, $\mathrm{rank}(E_\rho^s)=k$, and there exist constants $C,\lambda>0$, such that $$\frac{\Vert \psi_q^t(v)\Vert }{\Vert \psi_q^t(w)\Vert }\leqslant Ce^{-\lambda t},$$ for any $q\in S_P, t\in \mathbb{R}_{+}$ and $v\in E^s_{\rho,q}$, $w\in E^u_{\rho,q}$, with $\Vert v\Vert =\Vert w\Vert =1$.\\

On the other hand, let $\Cay(\Gamma,S)$ be the Cayley graph of $\Gamma$ with the generating set $S$. For a quasi-geodesic $l:\mathbb{R}\rightarrow \Cay(\Gamma, S)$, $l(+\infty)$ (respectively, $l(-\infty)$) denotes the forward (respectively, backward) endpoint of $l$. For constants $\lambda\geqslant 1$, $\epsilon\geqslant 0$ and $b\geqslant 0$, we consider the sets $$Q_{P,(\lambda,\epsilon,b)}=\{ l:\mathbb{R}\rightarrow \Cay(\Gamma,S)\ |\ l\ \text{is a}\ (\lambda,\epsilon)\text{-quasi-geodesic with}\ $$ $$ d(l(0),\id)\leqslant b\ \text{and}\ (l(+\infty),l(-\infty))\in P \}$$ and $\Gamma^+_{P,(\lambda,\epsilon,b)}=\{ l(t)\ |\ l\in Q_{P,(\lambda,\epsilon,b)}, t\in \mathbb{R}_+\ \text{with}\ l(t)\in \Gamma \}$. We also write $\Gamma^+_P=\Gamma^+_{P,(1,0,0)}$. We say that $\rho$ is \emph{$k$-dominated on $\Gamma_P^+$} if there exist constants $C,\lambda>0$, such that  $$\dfrac{\sigma_{k+1}(\rho(\gamma))}{\sigma_k(\rho(\gamma))}\leqslant Ce^{-\lambda|\gamma|},$$ for any $\gamma\in \Gamma_P^+$, where $\sigma_k(\cdot)$ denotes the $k$-th singular value with respect to the standard norm on $\mathbb{R}^d$. If $P=\partial^{(2)}\Gamma$, then $\Gamma_P^{+}=\Gamma$, and this specializes to the definition of $k$-dominated representations in \cite{BPS}.\\

Moreover, we generalize the definition of strongly dynamics preserving property in our case, which was introduced by Canary--Zhang--Zimmer \cite{CZZ} in the study of classical Anosov representations for geometrically finite Fuchsian groups. We denote $$\Pi_1(P)=\{ x\in\partial \Gamma\ |\ \exists y\in \partial \Gamma, \  (x,y)\in P\}\ \text{and}\  \Pi_2(P)=\{ y\in\partial \Gamma\ |\ \exists x\in \partial \Gamma, \ (x,y)\in P\}.$$ Let $\xi^k:\Pi_1(P)\rightarrow \mathrm{Gr}_k(\mathbb{R}^d)$ and $\xi_{d-k}:\Pi_2(P)\rightarrow \mathrm{Gr}_{d-k}(\mathbb{R}^d)$ be two $\rho$-equivariant maps such that $(\xi^k,\xi_{d-k}):P\rightarrow \mathrm{Gr}_k(\mathbb{R}^d)\times  \mathrm{Gr}_{d-k}(\mathbb{R}^d)$ is continuous and transverse. We say $(\xi^k,\xi_{d-k})$ is \emph{strongly dynamics preserving of index $k$ on $P$} if for any given constants $\lambda\geqslant 1$ and $\epsilon,b\geqslant 0$, points $x,y\in \partial\Gamma$, and a sequence $\{\gamma_n\}$ in $\Gamma^+_{P,(\lambda,\epsilon,b)}$, with $\gamma_n\rightarrow x$ and $\gamma_n^{-1}\rightarrow y$ as $n\rightarrow +\infty$, we have  $$\lim\limits_{n\rightarrow +\infty}\rho(\gamma_n)V=\xi^k(x)$$ for any $V\in \mathrm{Gr}_k(\mathbb{R}^d)$ transverse to $\xi_{d-k}(y)$.\\

Then we have the equivalence below.

\begin{theorem}\label{MT1}
    Let $\Gamma$ be a hyperbolic group and let $P$ be a closed, $\Gamma$-invariant subset of $\partial^{(2)}\Gamma$. Let $\rho$ be a representation of $\Gamma$ into $\mathrm{GL}(d,\mathbb{R})$.
    Then the following are equivalent.
    \begin{itemize}
        \item[(1).] $\rho$ is $k$-Anosov over $S_P$;
        \item[(2).] $E_\rho$ admits a $k$-dominated splitting over $S_P$;
        \item[(3).] $\rho$ is $k$-dominated on $\Gamma_P^+$;
        \item[(4).] There exists a unique pair of limit maps $\xi^k: \Pi_1(P)\rightarrow \mathrm{Gr}_k(\mathbb{R}^d)$ and $\xi_{d-k}:  \Pi_2(P)\rightarrow \mathrm{Gr}_{d-k}(\mathbb{R}^d)$, such that $(\xi^k,\xi_{d-k}): P\rightarrow \mathrm{Gr}_k(\mathbb{R}^d)\times \mathrm{Gr}_{d-k}(\mathbb{R}^d)$ is continuous, transverse and strongly dynamics preserving.
    \end{itemize}
    Moreover, the set of $k$-Anosov representations over $S_P$ is open in $\mathrm{Hom}(\Gamma,\mathrm{GL}(d,\mathbb{R}))$, i.e., $k$-Anosov representations over $S_P$ are stable.
\end{theorem}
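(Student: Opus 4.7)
The plan is to establish the chain of equivalences $(1) \Leftrightarrow (2) \Leftrightarrow (3) \Leftrightarrow (4)$ by adapting the strategy of Bochi--Potrie--Sambarino to the compact, $\phi$-invariant subflow $S_P$, and then deduce stability from the openness of dominated splittings. The equivalence $(1) \Leftrightarrow (2)$ is a purely dynamical statement about the continuous linear cocycle $\psi$ over the compact flow $(S_P, \phi)$: a Bochi--Gourmelon type argument shows that exponential decay of $\sigma_{d-k+1}/\sigma_{d-k}$ along orbits is equivalent to the existence of a continuous $\psi$-invariant dominated splitting of the correct index. Compactness of $S_P$ lets the classical argument go through verbatim. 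For $(2) \Leftrightarrow (3)$, the idea is that parallel transport of $\psi^t$ along an orbit of $\phi$ in $S_P$ from a lift of the identity to a lift of $\gamma$ differs from $\rho(\gamma^{-1})$ by a bounded multiplicative factor; combining this with a shadowing argument---every $\gamma \in \Gamma_P^+$ sits on a quasi-geodesic with endpoints in $P$ that descends to an orbit segment in $S_P$, and conversely every $\phi$-orbit in $S_P$ is fellow-traveled by elements of $\Gamma_P^+$---one transfers the exponential singular-value gap between the algebraic and dynamical cocycles.

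For $(3) \Rightarrow (4)$, for each $x \in \Pi_1(P)$ and each sequence $\{\gamma_n\} \subset \Gamma_P^+$ converging to $x$, take the limit of the $k$-dimensional subspace of $\mathbb{R}^d$ spanned by the top $k$ right-singular vectors of $\rho(\gamma_n)$; the exponential gap $\sigma_k(\rho(\gamma_n))/\sigma_{k+1}(\rho(\gamma_n)) \to \infty$ forces convergence to a limit $\xi^k(x)$ independent of the chosen sequence, and $\xi_{d-k}$ is built analogously from bottom singular directions. Continuity, transversality, and $\rho$-equivariance follow from standard Cartan decomposition arguments, and strong dynamics preservation is a consequence of uniform exponential attraction of $\rho(\gamma_n)$ on the Grassmannian complement of $\xi_{d-k}(y)$, at rate governed by the same singular-value gap. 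Conversely, $(4) \Rightarrow (3)$ is by contradiction: if the exponential decay fails, one extracts a sequence $\gamma_n \in \Gamma_P^+$ along $(\lambda, \epsilon, b)$-quasi-geodesics with $\gamma_n \to x$ and $\gamma_n^{-1} \to y$, and applies strong dynamics preservation to force a uniform gap after all---the crucial point being that the definition of strong dynamics preservation allows arbitrary quasi-geodesic parameters, which provides exactly the uniformity needed.

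Finally, stability follows from $(1) \Leftrightarrow (2)$: a small perturbation of $\rho$ in $\mathrm{Hom}(\Gamma, \mathrm{GL}(d,\mathbb{R}))$ induces a $C^0$-small perturbation of $\psi$ on the compact flow space $S_P$, and continuous dominated splittings are open under such perturbations by the standard cone-field argument. The main obstacle I expect lies in $(2) \Leftrightarrow (3)$: Gromov--Mineyev's geodesic flow space $U\Gamma$ is canonical only up to orbit equivalence, its metric is only coarsely related to the word metric on $\Gamma$, and parallel transport along flow lines is only defined up to bounded distortion, so making the correspondence between flow orbits and group elements quantitative requires careful shadowing estimates. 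The subtlety is compounded by the fact that $\Gamma_P^+$ is defined through arbitrary quasi-geodesics with endpoints in $P$ rather than exact geodesics, so the bounded errors and Morse lemma constants must be controlled uniformly in the parameters $(\lambda, \epsilon, b)$ to get a single pair of constants $C, \lambda$ valid for all of $\Gamma_P^+$.
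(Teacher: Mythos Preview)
Your overall strategy matches the paper closely: $(1)\Leftrightarrow(2)$ via Bochi--Gourmelon over the compact base $S_P$, $(1)\Leftrightarrow(3)$ via shadowing between flow orbits in $\widetilde{S_P}$ and elements of $\Gamma_P^+$ (this is exactly the paper's Lemmas~\ref{disrelation} and~\ref{equiesti}), $(3)\Rightarrow(4)$ via limits of Cartan attractors $U_k(\rho(\gamma_n))$, and stability via a graph-transform/cone-field argument. One small correction: the limit map $\xi^k(x)$ is built from the \emph{left}-singular spaces $U_k(\rho(\gamma_n))$ (eigenspaces of $\rho(\gamma_n)\rho(\gamma_n)^*$), not the right-singular vectors; and $\Gamma_P^+$ is defined in the paper via \emph{geodesics} through $\id$, with the quasi-geodesic enlargement $\Gamma_{P,(\lambda,\epsilon,b)}^+$ shown equivalent a posteriori.

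The one place your plan diverges from the paper, and where it has a genuine gap, is the implication back from $(4)$. You propose $(4)\Rightarrow(3)$ by contradiction: if exponential decay of $\sigma_{k+1}/\sigma_k$ fails along $\Gamma_P^+$, extract a sequence $\gamma_n$ with $\gamma_n\to x$, $\gamma_n^{-1}\to y$, and invoke strong dynamics preservation. But strong dynamics preservation, via the CZZ lemma, only yields $\sigma_{k+1}(\rho(\gamma_n))/\sigma_k(\rho(\gamma_n))\to 0$ along any such sequence; it gives no \emph{rate}. The negation of exponential decay is perfectly compatible with sub-exponential convergence to zero, so there is no contradiction. Upgrading ``goes to zero along every sequence'' to ``decays at a uniform exponential rate'' requires a compactness/iteration step that your sketch does not supply, and which is awkward to carry out purely on the group side because the singular-value gap is not subadditive along geodesics.

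The paper closes the loop differently, proving $(4)\Rightarrow(2)$ instead (Theorem~\ref{sdpimpliesds}): the continuous, transverse, $\rho$-equivariant pair $(\xi^k,\xi_{d-k})$ directly defines a $\psi$-invariant splitting $E|_{S_P}=E^k\oplus E_{d-k}$. Strong dynamics preservation is then used, together with compactness of $S_P$, to find a single time $T$ with $\|\psi^t_q|_{E^k}\|/m(\psi^t_q|_{E_{d-k}})\leqslant 1/2$ for all $t>T$ and all $q\in S_P$; iterating gives exponential domination. Passing through the flow space is what converts pointwise convergence into a uniform rate, and this is precisely the missing ingredient in your direct $(4)\Rightarrow(3)$ route.
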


When $\rho$ is $k$-dominated on $\Gamma_P^+$, we will investigate more about the properties of the limit maps $\xi^k$ and $\xi_{d-k}$. Unlike the classical Anosov representations, $\xi^k$ is not necessarily continuous on $\Pi_1(P)$ (see Example \ref{exampledisctslimitmap}). However, we may consider the sequences of sets $$Q^{+\infty}_{P,(\lambda,\epsilon,b)}= \{ l(+\infty)\in \partial\Gamma \ |\ l\in Q_{P,(\lambda,\epsilon,b)} \},$$ which satisfies $$\Pi_1(P)=\bigcup_{\lambda\geqslant 1, \epsilon\geqslant 0, b\geqslant 0} Q^{+\infty}_{P,(\lambda,\epsilon,b)}$$ (see Lemma \ref{unionconverge}). Then we will show the following properties.

\begin{theorem}\label{MT2}
    Let $\rho:\Gamma\rightarrow\mathrm{GL}(d,\mathbb{R})$ be a $k$-dominated representation on $\Gamma_P^+$.\\
        (1). (A weaker $P_k$-Cartan Property) Let $\gamma_n$ be a sequence in $\Gamma$ and $\gamma_n\rightarrow x\in \Pi_1(P)$ as $n\rightarrow \infty$. If we can find constants $\lambda\geqslant 1, \epsilon, b\geqslant 0$ such that  $\gamma_n\in \Gamma^+_{P,(\lambda,\epsilon,b)}$ for all $n\in \mathbb{N}$, then $$\xi^k(x)=\lim\limits_{n\rightarrow +\infty} U_k(\rho(\gamma_n)).$$
        (2). (Hölder continuity) For any constants $\lambda\geqslant 1$ and $\epsilon,b\geqslant 0$, $\xi^k$ is Hölder continuous on $Q^{+\infty}_{P,(\lambda,\epsilon,b)}$ and hence 
        $(\xi^k,\xi_{d-k})$ is Hölder continuous on any compact subset of $P$.
\end{theorem}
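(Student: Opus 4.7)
The plan is to deduce both parts from the strongly dynamics preserving limit maps $(\xi^k, \xi_{d-k})$ given by part (4) of Theorem \ref{MT1}, together with the standard singular-value stability estimates: for $A, M \in \mathrm{GL}(d, \mathbb{R})$,
\[
d_{\mathrm{Gr}}(U_k(A), U_k(AM)) \leq C\,\|M\|\,\|M^{-1}\|\cdot\frac{\sigma_{k+1}(A)}{\sigma_k(A)},
\]
and for any $V \in \mathrm{Gr}_k(\mathbb{R}^d)$ transverse to $U_{d-k}(A^{-1})$ at angle $\alpha$,
\[
d_{\mathrm{Gr}}(A\cdot V,\ U_k(A)) \leq \frac{C}{\sin\alpha}\cdot\frac{\sigma_{k+1}(A)}{\sigma_k(A)}.
\]

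For part (1), I will show that every subsequential limit of $U_k(\rho(\gamma_n))$ equals $\xi^k(x)$. Pass to a subsequence so that $U_k(\rho(\gamma_n)) \to V^\ast$, $U_{d-k}(\rho(\gamma_n^{-1})) \to W^\ast$, and $\gamma_n^{-1} \to y \in \partial\Gamma$. To locate $y$ in $\Pi_2(P)$, pick $l_n \in Q_{P,(\lambda,\epsilon,b)}$ with $\gamma_n = l_n(t_n)$, $t_n \geq 0$; the translate $\gamma_n^{-1} l_n$ is a $(\lambda,\epsilon)$-quasi-geodesic through $\id$ (at parameter $t_n$) with ideal endpoints $\gamma_n^{-1}\cdot(l_n(+\infty), l_n(-\infty)) \in P$ by $\Gamma$-invariance. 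Standard hyperbolic-geometry bounds give $(\gamma_n^{-1}l_n(+\infty), \gamma_n^{-1}l_n(-\infty))_{\id} \leq C(\lambda,\epsilon)$, while $\gamma_n^{-1} = (\gamma_n^{-1} l_n)(0)$ lies on this quasi-geodesic at parameter $0$ and $d(\id, \gamma_n^{-1}) = |\gamma_n| \to \infty$ forces $\lim_n \gamma_n^{-1} = \lim_n \gamma_n^{-1} l_n(-\infty) = y$; the uniform Gromov-product bound keeps the limit pair inside $\partial^{(2)}\Gamma$, so closedness of $P$ yields $(\lim_n \gamma_n^{-1} l_n(+\infty), y) \in P$, hence $y \in \Pi_2(P)$. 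Now choose $V_0 \in \mathrm{Gr}_k(\mathbb{R}^d)$ transverse to both $W^\ast$ and $\xi_{d-k}(y)$, possible since each non-transversality locus is a proper closed subvariety. The second stability estimate, together with $\sigma_{k+1}(\rho(\gamma_n))/\sigma_k(\rho(\gamma_n)) \to 0$ (from $k$-domination and $|\gamma_n| \to \infty$) and uniform transversality of $V_0$ to $U_{d-k}(\rho(\gamma_n^{-1}))$, forces $\rho(\gamma_n) V_0 \to V^\ast$. On the other hand, the strongly dynamics preserving property gives $\rho(\gamma_n) V_0 \to \xi^k(x)$, so $V^\ast = \xi^k(x)$.

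For part (2), fix $(\lambda, \epsilon, b)$ and for each $x \in Q^{+\infty}_{P,(\lambda,\epsilon,b)}$ fix a representative ray $l_x \in Q_{P,(\lambda,\epsilon,b)}$ with $l_x(+\infty) = x$. For $x_1, x_2$ in this set, hyperbolicity of $\Cay(\Gamma, S)$ ensures $l_{x_1}, l_{x_2}$ fellow-travel within a uniform constant up to some parameter $T$, with $T \asymp (x_1\,|\,x_2)_{\id} \asymp -\log d_\partial(x_1, x_2)$. Pick $\gamma_i \in \Gamma$ on $l_{x_i}$ with $|\gamma_i| \asymp T$ and $d(\gamma_1, \gamma_2) = O(1)$; set $\delta = \gamma_1^{-1}\gamma_2$, so $\|\rho(\delta)^{\pm 1}\|$ is uniformly bounded. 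Split
\[
d_{\mathrm{Gr}}(\xi^k(x_1), \xi^k(x_2)) \leq d_{\mathrm{Gr}}(\xi^k(x_1), U_k(\rho(\gamma_1))) + d_{\mathrm{Gr}}(U_k(\rho(\gamma_1)), U_k(\rho(\gamma_2))) + d_{\mathrm{Gr}}(U_k(\rho(\gamma_2)), \xi^k(x_2)).
\]
The middle term is bounded by $C e^{-\lambda T}$ via the first stability estimate with $A = \rho(\gamma_1)$, $M = \rho(\delta)$ and the $k$-domination bound on $\sigma_{k+1}/\sigma_k$. For the outer terms, iterate the first stability estimate along consecutive vertices of $l_{x_i}$ past $\gamma_i$: each consecutive distance decays geometrically in $n$ like $e^{-\lambda n}$, so the series telescopes (its limit being $\xi^k(x_i)$ by the weak Cartan property of part (1)) and the tail from $n \geq T$ is $\leq C e^{-\lambda T}$. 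Combining gives $d_{\mathrm{Gr}}(\xi^k(x_1), \xi^k(x_2)) \leq C d_\partial(x_1, x_2)^{\alpha}$ for a uniform exponent $\alpha > 0$. A symmetric argument (applied to the contragredient representation, which is $(d-k)$-dominated on the analogous backward set) yields Hölder continuity of $\xi_{d-k}$; for a compact $K \subset P$, continuity and compactness of $K$ in $\partial^{(2)}\Gamma$ give a uniform upper bound on $(x\,|\,y)_{\id}$ for $(x,y) \in K$, so the projections of $K$ are covered by $Q^{+\infty}_{P,(1,0,b)}$ and its backward analogue for one $b$, and the two Hölder estimates combine.

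The main technical obstacle is the two singular-value stability estimates above, which drive every bound. They are standard in the BPS-style theory, but in the present subflow setting the quantitative constants must be tracked uniformly in the quasi-geodesic parameters $(\lambda, \epsilon, b)$: unlike the classical Anosov case, there is no single ambient hyperbolic flow on $S_P$ whose expansion rate governs all estimates at once, so the Hölder exponent depends on this data through the fellow-traveling constant entering the relation $T \asymp -\log d_\partial(x_1, x_2)$.
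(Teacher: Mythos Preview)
Your argument for part~(1) is circular relative to the paper's logical structure. You invoke the strongly dynamics preserving property of $(\xi^k,\xi_{d-k})$ from Theorem~\ref{MT1}(4) to conclude $\rho(\gamma_n)V_0 \to \xi^k(x)$. But in the paper the implication (3)$\Rightarrow$(4) of Theorem~\ref{MT1} (Theorem~\ref{1to4mainthm1}) is established precisely by combining the weak Cartan property---i.e., the very statement you are proving---with Lemma~\ref{lemmaczz}. Indeed, by Lemma~\ref{lemmaczz} the SDP condition and the convergence $U_k(\rho(\gamma_n)) \to \xi^k(x)$ are equivalent (given the singular-value gap and the analogous statement for $\xi_{d-k}$), so deducing one from the other is not an independent argument. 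The paper's direct proof is short and non-circular: one splits
\[
d\bigl(\xi^k(x),U_k(\rho(\gamma_n))\bigr) \le d\bigl(\xi^k(x),\xi^k(l_n(+\infty))\bigr) + d\bigl(\xi^k(l_n(+\infty)),U_k(\rho(l_n(t_n)))\bigr),
\]
where $\gamma_n = l_n(t_n)$ with $l_n \in Q_{P,(\lambda,\epsilon,b)}$. The first term tends to zero by the continuity of $\xi^k$ on $Q^{+\infty}_{P,(\lambda,\epsilon,b)}$ (Proposition~\ref{ctspiP}), and the second by the uniform convergence of Proposition~\ref{uniformconvergenceoflimitmaps}.

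Your argument for part~(2) is correct and follows essentially the same strategy as the paper: both compare $\xi^k(x_1)$ and $\xi^k(x_2)$ to $U_k(\rho(\gamma))$ for elements $\gamma$ of word length comparable to the Gromov product $(x_1\cdot x_2)_{\id}$, using the exponential Cauchy estimate coming from Lemma~\ref{estsingularvalue}(3). The execution differs: the paper constructs an auxiliary quasi-geodesic $l'_2 \in Q_{P,(\lambda',\epsilon',b)}$ sharing an initial segment with $l_1$ (via the coarse projection of Proposition~\ref{newqgfromproj}), so a single element $l_1(t_{l_1}(y))$ serves both endpoints; you instead pick nearby $\gamma_1,\gamma_2$ on $l_{x_1},l_{x_2}$ and bridge with $d\bigl(U_k(\rho(\gamma_1)),U_k(\rho(\gamma_2))\bigr) \le Ce^{-\lambda T}$. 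Both work; yours avoids the coarse-projection machinery at the cost of a three-term split. Note that your appeal to part~(1) to identify the tail limit with $\xi^k(x_i)$ is unnecessary---this is simply the definition of $\xi^k$ in Proposition~\ref{uniformconvergenceoflimitmaps}---so part~(2) stands independently of the circularity in part~(1).
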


\begin{remark}\label{genLiegroup}
    In this paper, we only consider representations of hyperbolic groups into $\mathrm{GL}(d,\mathbb{R})$ as we can compute the singular values. The theory works if we replace $\mathrm{GL}(d,\mathbb{R})$ by $\mathrm{PGL}(d,\mathbb{R})$ since we can still compute the ratio of singular values. We can also replace $\mathbb{R}$ by $\mathbb{C}$ since $\mathrm{GL}(d,\mathbb{C})$ can be viewed as a subgroup of $\mathrm{GL}(2d,\mathbb{R})$. More generally, we can replace $\mathrm{GL}(d,\mathbb{R})$ by any non-compact semisimple Lie group with finite center. See \cite{CZZ} Appendix A.
\end{remark}

The paper is organized in the following order.
In Section \ref{preliminaries}, we mainly introduce some preliminaries from linear algebra and hyperbolic spaces that we need later. One can easily find a reference for these knowledge, such as Bridson and Haefliger's book \cite{BH}.
In Section \ref{equivalence1}, we define the notion of dominated representations on a subset of the group and Anosov representations over a closed, invariant subflow, then show the equivalence between them, i.e., (1)$\Leftrightarrow$(3) in Theorem \ref{MT1}.
In Section \ref{secsplitting}, we construct the dominated splitting of Anosov representations over closed, invariant subflows, then show that (1)$\Leftrightarrow$(2) in Theorem \ref{MT1}.
In Section \ref{sdpp}, we discuss the properties of the limit maps. In particular, we show that (3)$\Rightarrow$(4)$\Rightarrow$(2) in Theorem \ref{MT1} and (1) in Theorem \ref{MT2}.
In Section \ref{holdercts}, we prove that the limit maps of Anosov representations over closed, invariant subflows are Hölder continuous in a weaker sense, i.e., (2) in Theorem \ref{MT2}.
In Section \ref{secstability}, we show the stability of Anosov representations over closed, invariant subflows.
In Section \ref{theexamples}, we give several examples of non-discrete representations that are Anosov over subflows.

\subsection*{Acknowledgements}
The author would like to thank Nicolas Tholozan and Tengren Zhang for introducing him to the subject, many helpful discussions, and plenty of useful advice and comments.\\

\section{Preliminaries}\label{preliminaries}
In this section, we recall some basic knowledge of hyperbolic spaces, hyperbolic groups and their Cayley graphs, and also some properties of singular values of matrices and linear maps.\\

\subsection{$\delta$-hyperbolic spaces and hyperbolic groups}\ \\

We start by recalling some basic notions of hyperbolic spaces. See \cite[Part~III,Chapter~H]{BH} for reference.\\

Let $(X,d_1)$ and $(Y,d_2)$ be two geodesic metric spaces. We say a map $f:X\to Y$ is a \emph{$(\lambda, \epsilon)$-quasi-isometric embedding} for constants $\lambda\geqslant 1$  and $\epsilon\geqslant 0$, if $$\lambda^{-1}d_1(x_1,x_2)-\epsilon\leqslant d_2(f(x_1),f(x_2))\leqslant\lambda d_1(x_1,x_2)+\epsilon,$$ for any $x_1,x_2\in X$. We say $f$ is a \emph{$(\lambda,\epsilon)$-quasi-isometry} if it is moreover coarsely surjective, i.e., there exists $C\geqslant 0$, such that for any $y\in Y$, $d_2(y,f(x))\leqslant C$ for some $x\in X$.\\

We say a curve $\gamma: \mathbb{R} \rightarrow X$ is a \emph{$(\lambda,\epsilon)$-quasi-geodesic} for some constants $\lambda\geqslant1$ and $\epsilon\geqslant0$, if $\gamma$ is a $(\lambda,\epsilon)$-quasi-isometric embedding. We simply say  $\gamma$ is a quasi-geodesic if it is a $(\lambda,\epsilon)$-quasi-geodesic for some constants $\lambda\geqslant1$ and $\epsilon\geqslant0$. We say $\gamma$ is a \emph{geodesic} (with length parameter) if it is a $(1,0)$-quasi-geodesic. We often abuse the terminology and refer to the images of geodesics and quasi-geodesics as geodesics and quasi-geodesics respectively.\\

Let $\delta$ be a non-negative real number. A geodesic triangle in $X$ is said to be \emph{$\delta$-slim} if any side of it is contained in the closed $\delta$-neighborhood of the other two sides. We say $X$ is \emph{$\delta$-hyperbolic} if $X$ is a proper geodesic space and any geodesic triangle in $X$ is $\delta$-slim. We also say $X$ is \emph{hyperbolic} if $X$ is $\delta$-hyperbolic for some $\delta\geqslant 0$. Hyperbolicity is preserved by quasi-isometries.\\

Now we assume $X$ is a hyperbolic space. We say two geodesic rays $\gamma_1,\gamma_2:\mathbb{R}_+\to X$ are asymptotic if $t\mapsto d(\gamma_1(t),\gamma_2(t))$ is bounded. This defines an equivalence relation on the set of geodesic rays. The Gromov boundary of $X$ is the set $\partial X$ of all the equivalence classes of geodesic rays. For a geodesic $\gamma:\mathbb{R}\to X$, denote the equivalence class of $\gamma(\mathbb{R}_+)$ by $\gamma(+\infty)$ and the equivalence class of $\gamma(-\mathbb{R}_+)$ by $\gamma(-\infty)$. We call the pair $(\gamma(+\infty),\gamma(-\infty))$ the \emph{endpoints} of $\gamma$.\\

We fix a base point $p\in X$. The topology of $X\cup \partial X$ is given by the convergence described below. We say a curve $\gamma: \mathbb{R}_+\rightarrow X$ is a \emph{generalized geodesic ray} if there exists $R\geqslant 0$, such that $\gamma|_{[0,R]}$ is a geodesic segment and  $\gamma|_{[R,+\infty)}\equiv \gamma(R)$. In this case, we also denote $\gamma(+\infty)=\gamma(R)$. Let $\{x_n\}_{n\in \mathbb{N}}$ be a sequence of points in $X\cup \partial X$ and $x$ a point in $X\cup \partial X$.  Say $x_n$ converges to $x$ as $n\to +\infty$ if there exists a sequence of generalized geodesic rays $\{\gamma_n\}_{n\in \mathbb{N}}$, with $\gamma_n(0)=p$ and $\gamma_n(+\infty)=x_n$, such that any subsequence of $\{\gamma_n\}_{n\in \mathbb{N}}$ contains a convergent subsequence, uniformly when restricted to any compact set, to a generalized geodesic ray $\gamma$ with $\gamma(+\infty)=x$.
The topology defined is independent of the choice of the base point $p$. With this topology, $X\cup \partial X$ is compact and $\partial X$ is closed in $X\cup \partial X$.\\

One important property of hyperbolic spaces that we will use is the fellow traveler property, which is also called the Morse lemma in some references. A detailed proof can be find in \cite{BH} Part III, Theorem 1.7.

\begin{proposition}[Fellow Traveler Property]\label{FTP}
    Let $X$ be a $\delta$-hyperbolic space and let $c_1,c_2:\mathbb{R}\rightarrow X$ be two $(\lambda, \epsilon)$-quasi-geodesics in $X$ with the same endpoints. There exists a constant $R_0=R_0(\delta,\lambda,\epsilon)$ that depends only on $\delta,\lambda$ and $\epsilon$ such that the Hausdorff distance between $c_1$ and $c_2$ is less than $R_0$.
\end{proposition}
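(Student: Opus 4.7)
The plan is to reduce to the case where one of the two curves is a genuine geodesic, then invoke the standard Morse-type stability estimate: every $(\lambda,\epsilon)$-quasi-geodesic in a $\delta$-hyperbolic space lies within Hausdorff distance $R_0'(\delta,\lambda,\epsilon)$ of any geodesic sharing its endpoints. Since $c_1$ and $c_2$ have a common pair of boundary endpoints $(x,y)\in\partial X\times\partial X$, properness and $\delta$-hyperbolicity of $X$ give a bi-infinite geodesic $\alpha:\mathbb{R}\to X$ from $x$ to $y$, constructed as a subsequential limit of geodesic segments between points on $c_1$ tending to $x$ and $y$ (using compactness of $X\cup\partial X$). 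By the triangle inequality for Hausdorff distance, the bound $\mathrm{Haus}(c_i,\alpha)\leqslant R_0/2$ for each $i$ yields the proposition, so the whole task reduces to bounding $\mathrm{Haus}(c,\alpha)$ for a single quasi-geodesic $c$ and a geodesic $\alpha$ with the same endpoints.

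To prove the latter, I would first reduce to the finite case by truncation. Let $c^{(n)}=c|_{[-n,n]}$ and let $\alpha^{(n)}$ be a geodesic from $c(-n)$ to $c(n)$. A uniform bound on $\mathrm{Haus}(c^{(n)},\alpha^{(n)})$ passes to the limit as $n\to\infty$, because $\alpha^{(n)}$ converges subsequentially to $\alpha$ uniformly on compact sets. For each finite segment, I would replace $c|_{[-n,n]}$ by a continuous Lipschitz representative obtained by linearly interpolating between its values on integer points, at the cost of slightly worse quasi-geodesic constants $\lambda',\epsilon'$ that still depend only on $\lambda,\epsilon$.

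The core estimate is the standard exponential-detour lemma in a $\delta$-hyperbolic space: any continuous path whose endpoints lie on a geodesic $\alpha$ but which remains outside the open $D$-neighborhood of $\alpha$ has length at least of order $2^{D/\delta}$. This is obtained by iteratively bisecting the path and using the slim-triangle condition to show that the distance to $\alpha$ can shrink by at most $\delta$ under a single bisection. Now suppose the maximum of $\mathrm{dist}(\cdot,\alpha)$ along the (tame) quasi-geodesic $c$ equals $D$, attained on a maximal subsegment $c|_{[t_1,t_2]}$ lying outside the $D/2$-neighborhood of $\alpha$. Projecting the endpoints $c(t_1),c(t_2)$ to $\alpha$ and closing the loop along $\alpha$, the nearest-point projections sit at controlled distance and span a segment of $\alpha$ of length $L_\alpha$ which one bounds via the quasi-isometric property. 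The $(\lambda',\epsilon')$-quasi-geodesic property gives an upper bound on the length of $c|_{[t_1,t_2]}$ of the form $\lambda'(2D+L_\alpha)+\lambda'\epsilon'$, while the exponential-detour lemma forces this length to be at least of order $2^{D/(2\delta)}$. Confronting the linear upper bound with the exponential lower bound forces $D$ to be bounded by some $R_0'(\delta,\lambda,\epsilon)$.

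The main obstacle is the exponential lower bound on detours, as this is the only step where $\delta$-hyperbolicity enters in an essential way and it requires the careful dyadic-bisection induction on the path. The remaining ingredients — existence of the limiting bi-infinite geodesic $\alpha$ via visibility, passing from discrete quasi-geodesics to tame continuous ones with controlled constants, and extracting uniform bounds under the finite-to-infinite limit — are routine consequences of properness and the compactness of $X\cup\partial X$ recalled earlier in the section.
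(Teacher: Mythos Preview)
The paper does not actually prove this proposition: it simply records it as a known fact and refers the reader to \cite{BH}, Part~III, Theorem~1.7, for a detailed proof. Your outline is precisely the standard Bridson--Haefliger argument (tame the quasi-geodesic, use the exponential lower bound on the length of detours versus the linear upper bound coming from the quasi-isometric inequality, then pass from finite segments to the bi-infinite case), so there is nothing to compare --- your approach coincides with the cited one.
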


\begin{remark}
    The proposition is still true if we replace $c_1,c_2$ with $(\lambda, \epsilon)$-quasi-geodesic segments or $(\lambda, \epsilon)$-quasi-geodesic rays.
\end{remark}

If we extend the equivalence relation of being asymptotic to quasi-geodesic rays by saying that two quasi-geodesics are asymptotic if the Hausdorff distance between them is finite, then the set of equivalence classes of quasi-geodesics is canonically in bijection with $\partial X$.\\

As we will need more information about the parametrization of quasi-geodesics, we state the following stronger version of the fellow traveler property.

\begin{proposition}[Fellow Traveler Property with orientations]\label{FTPs} 
    Let $X$ be a $\delta$-hyperbolic space and $\lambda\geqslant 1$, $\epsilon \geqslant 0$ are two constants. Then there exists a constant $R=R(\delta,\lambda,\epsilon)$ that depends only on $\delta,\lambda$ and $\epsilon$ with the following property. Let $c_1$, $c_2:\mathbb{R}\rightarrow X$ be two arbitrary $(\lambda, \epsilon)$-quasi-geodesics in $X$ with $c_1(+\infty)=c_2(+\infty)$ and $c_1(-\infty)=c_2(-\infty)$. Let $R_0=R_0(\delta,\lambda,\epsilon)$ be the constant given by Proposition \ref{FTP}. For any $t_1 < t_2$ and $s_1\in\mathbb{R}$ such that $d(c_1(t_1),c_2(s_1))\leqslant R_0$, there exists $s_2> s_1$, such that $d(c_1(t_2),c_2(s_2))\leqslant R$.
\end{proposition}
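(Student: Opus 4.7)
The plan is to reduce the oriented version to the standard fellow traveler property (Proposition \ref{FTP}) by concatenating $c_2$ with a short connecting segment, then handling the short-range situation separately.

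Since $d(c_1(t_1), c_2(s_1)) \leqslant R_0$, I first pick a geodesic segment $\sigma$ of length at most $R_0$ from $c_1(t_1)$ to $c_2(s_1)$ and form the concatenation $c' := \sigma * c_2|_{[s_1, +\infty)}$, which is a $(\lambda', \epsilon')$-quasi-geodesic ray from $c_1(t_1)$ to $c_1(+\infty) = c_2(+\infty)$, with $\lambda', \epsilon'$ depending only on $\lambda, \epsilon, R_0$. The ray $c_1|_{[t_1, +\infty)}$ shares both endpoints with $c'$, so Proposition \ref{FTP} (applied to rays, as permitted by the remark following it) provides a constant $R_1 = R_1(\delta, \lambda, \epsilon)$ bounding the Hausdorff distance between these two rays.

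Applying this to the point $c_1(t_2) \in c_1|_{[t_1, +\infty)}$ yields a point $p$ on $c'$ with $d(c_1(t_2), p) \leqslant R_1$. If $p = c_2(s_2)$ with $s_2 > s_1$, I am done with $R := R_1$. Otherwise $p$ lies on $\sigma$ or equals $c_2(s_1)$, giving $d(c_1(t_2), c_1(t_1)) \leqslant R_1 + R_0$; the quasi-geodesic inequality for $c_1$ then forces $t_2 - t_1 \leqslant \lambda(R_1 + R_0 + \epsilon)$. In this short-range case I set $s_2 := s_1 + 1 > s_1$ and a triangle inequality, together with $d(c_2(s_1), c_2(s_1+1)) \leqslant \lambda + \epsilon$, gives
\[ d(c_1(t_2), c_2(s_2)) \leqslant d(c_1(t_2), c_1(t_1)) + d(c_1(t_1), c_2(s_1)) + d(c_2(s_1), c_2(s_1+1)) \leqslant 2R_0 + R_1 + \lambda + \epsilon. \]
Taking $R := \max\{R_1,\; 2R_0 + R_1 + \lambda + \epsilon\}$ yields the universal constant depending only on $\delta, \lambda, \epsilon$.

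The main bookkeeping point is verifying that $c'$ is a quasi-geodesic ray with constants controlled by $(\lambda, \epsilon, R_0)$, which is standard but slightly tedious (the only non-trivial case in the quasi-geodesic inequality is when two sample points lie in different pieces of $c'$, where the length bound on $\sigma$ absorbs the error). Once this reduction is in place, the argument itself is short: Proposition \ref{FTP} produces a fellow-traveling parameter on $c'$, which is either already strictly ahead of $s_1$ on $c_2$, or so close to $c_2(s_1)$ that a unit-length forward shift along $c_2$ absorbs the error.
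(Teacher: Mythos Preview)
Your proof is correct and follows essentially the same route as the paper: concatenate a short segment from $c_1(t_1)$ to $c_2(s_1)$ with the forward ray $c_2|_{[s_1,+\infty)}$, then apply the ordinary fellow traveler property (Proposition~\ref{FTP}) to this ray and $c_1|_{[t_1,+\infty)}$. The paper states this in one line, bounding the Hausdorff distance between $c_1([t_1,+\infty))$ and $c_2([s_1,+\infty))$ directly; you unpack the same idea and additionally handle the edge case where the nearest point falls on the connecting segment (forcing $s_2>s_1$ strictly by shifting forward one unit), which the paper glosses over.
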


\begin{proof}
    Let $R_1$ be a constant such that the Hausdorff distance between any two $(\lambda, \epsilon+R_0)$-quasi-geodesics in $X$ with the same endpoints is less then $R_1$. We take $R=2R_1+R_0$. Then the existence of $s_2$ follows from the fact that the Hausdorff distance between $c_1([t_1,+\infty])$ and $c_2([s_1,+\infty))$ is at most $2R_1+ d(c_1(t_1),c_2(s_1))\leqslant 2R_1+R_0=R$.\\
\end{proof}

We call the constant $R_0$ \emph{the upper bound of the Hausdorff distance between two $(\lambda, \epsilon)$-quasi-geodesics} and $R$ \emph{the order preserving upper bound of the Hausdorff distance between two $(\lambda, \epsilon)$-quasi-geodesics}. Intuitively, the proposition tells us that, if we pick two points $c_1(t_1)$ and $c_1(t_2)$ along $c_1$, we can pick two points $c_2(s_1)$ and $c_2(s_2)$ along $c_2$, such that the $c_1(t_i)$ and $c_2(s_i)$ ($i=1,2$) are close to each other and $s_2\geqslant s_1$ when $t_2\geqslant t_1$ (keep the order along the geodesics).\\

Let $\Gamma$ be a finitely generated group. For a finite, symmetric generating set $S$, i.e., $S=S^{-1}$ is finite, the \emph{Cayley graph} $\Cay(\Gamma,S)$ of $\Gamma$ with generating set $S$ is the metric graph (with edges of length 1) whose vertices are all the elements of $\Gamma$ and there is an edge between $\gamma$ and $\eta$ if and only if $\eta^{-1}\gamma\in S$. If $S$ and $S'$ are both finite, symmetric generating sets of $\Gamma$, then there is a quasi-isometry $\Cay(\Gamma, S)\rightarrow \Cay(\Gamma, S')$ that restricts to the $\id$ on $\Gamma$. We say $\Gamma$ is \emph{hyperbolic} if there exists a (hence, any) finite, symmetric generating set $S$ such that $\Cay(\Gamma, S)$ is hyperbolic. In the following sections, we always identify $\Gamma$ with the set of vertices in its Cayley graph.\\

\subsection{Singular values}\label{sectsv}\ \\

Let $(E,\langle\cdot,\cdot\rangle_1)$ and $(F,\langle\cdot,\cdot\rangle_2)$ be two real vector spaces equipped with inner products. The inner product $\langle\cdot,\cdot\rangle_1$ (respectively, $\langle\cdot,\cdot\rangle_2$) induces a identification of $E$ (respectively, $F$) and its dual space $E^*$ (respectively, $F^*$).\\

Let $\psi:E\to F$ be a linear map and let $\psi^*:F^*\to E^*$ be its dual. The map $\psi^*\psi :E\rightarrow E^*\simeq E$ is symmetric with respect to $\langle\cdot,\cdot\rangle_1$, and is diagonalizable with non-negative eigenvalues. The \emph{singular values} of $\psi$ are defined to be the eigenvalues of $(\psi^* \psi)^{\frac{1}{2}}$. Denote them by  $$\sigma_1(\psi)\geqslant \sigma_2(\psi)\geqslant\cdots\geqslant\sigma_d(\psi),$$ where $d=\dim E$. We say that $\psi$ has a \emph{gap of index $k$} if $\sigma_k(\psi)>\sigma_{k+1}(\psi)$.\\

Equivalently, we can define the singular values geometrically. The (operator) norm of $\psi$ is defined to be  $$\Vert \psi\Vert =\max_{v\in V,v\neq 0}\dfrac{\Vert \psi(v)\Vert _2}{\Vert v\Vert _1}$$ and the co-norm of $\psi$ is $$m(\psi)= \min_{v\in V,v\neq 0}\dfrac{\Vert \psi(v)\Vert _2}{\Vert v\Vert _1},$$ where $\Vert\cdot \Vert_1$ (respectively, $\Vert\cdot \Vert_2$) is the norm induced by $\langle \cdot,\cdot\rangle_1$ (respectively, $\langle \cdot,\cdot\rangle_2$). Then we have $$\Vert \psi\Vert =\sigma_1(\psi)\  \text{and}\ m(\psi) =\sigma_d(\psi).$$ Moreover, $$\sigma_k(\psi)=\max_{W\in \mathrm{Gr}_k(\mathbb{R}^d)} m(\psi|_W)=\min_{V\in \mathrm{Gr}_{d-k+1}(\mathbb{R}^d)} \Vert \psi|_V\Vert $$ for any $k\in\{1,2,...,d\}$.\\

For a matrix $A\in \mathrm{GL}(d,\mathbb{R})$, we can regard it as a linear transformation on $\mathbb{R}^d$  with the standard inner product and define the singular values of matrix $A$ to be the singular values of the linear transformation $A$. Observe that $\sigma_k(A)=\sigma_{d+1-k}(A^{-1})^{-1}$. Also, if $A$ has a gap of index $k$, i.e., $\sigma_k(A)>\sigma_{k+1}(A)$, we denote the eigenspace of $(AA^*)^{\frac{1}{2}}$ corresponding to the largest $k$ eigenvalues by $U_k(A)$ and denote the eigenspace of $(A^*A)^{\frac{1}{2}}$ corresponding to the smallest $d-k$ eigenvalues by $S_{d-k}(A)$. Then we have
$$ S_{d-k}(A)=U_{d-k}(A^{-1}),$$
$$ A\cdot S_{d-k}(A)= U_k(A)^{\perp},$$
$$ A^{-1}\cdot U_k(A)=S_{d-k}(A)^{\perp}.$$
One can verify that $U_k(A)\in \mathrm{Gr}_k(\mathbb{R}^d)$ is the unique $k$-plane such that $\sigma_k(A)=m(A|_{A^{-1}U_k(A)})$, and $S_{d-k}(A)\in \mathrm{Gr}_{d-k}(\mathbb{R}^d)$ is the unique $(d-k)$-plane such that $\sigma_{k+1}=\Vert A|_{S_{d-k}(A)}\Vert$.\\

The following observation is another useful estimate that follows from the geometric definition of singular values.

\begin{observation}\label{SingluarvalueEstimate}
    Let $A, B \in \mathrm{GL}(d,\mathbb{R})$ be two matrices, then $$\max\{m(A)\sigma_k(B),\sigma_k(A)m(B)\}\leqslant \sigma_k(AB)\leqslant\min\{\Vert A\Vert \sigma_k(B),\sigma_k(A)\Vert B\Vert \},$$ for any $1\leqslant k\leqslant d$.
\end{observation}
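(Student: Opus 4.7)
The plan is to derive all four inequalities directly from the variational characterization of singular values recalled just above, namely
$$\sigma_k(\psi)=\max_{W\in\mathrm{Gr}_k(\mathbb{R}^d)}m(\psi|_W)=\min_{V\in\mathrm{Gr}_{d-k+1}(\mathbb{R}^d)}\Vert\psi|_V\Vert.$$
The two upper bounds on $\sigma_k(AB)$ will come from the $\min$-formula, and the two lower bounds from the $\max$-formula. The only additional input is the submultiplicativity $\Vert Ax\Vert\leqslant\Vert A\Vert\Vert x\Vert$ and the corresponding lower bound $\Vert Ax\Vert\geqslant m(A)\Vert x\Vert$.

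For the upper bound $\sigma_k(AB)\leqslant\Vert A\Vert\sigma_k(B)$, I would pick a $(d-k+1)$-plane $V$ attaining the minimum $\sigma_k(B)=\Vert B|_V\Vert$. For any $v\in V$ one has $\Vert ABv\Vert\leqslant\Vert A\Vert\cdot\Vert Bv\Vert\leqslant\Vert A\Vert\sigma_k(B)\Vert v\Vert$, so $\Vert(AB)|_V\Vert\leqslant\Vert A\Vert\sigma_k(B)$, and the $\min$-formula applied to $AB$ yields the bound. For the symmetric bound $\sigma_k(AB)\leqslant\sigma_k(A)\Vert B\Vert$, I take instead $V'$ attaining the minimum for $A$ and set $V:=B^{-1}V'$, which is $(d-k+1)$-dimensional because $B$ is invertible. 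Since $BV=V'$, for $v\in V$ we get $\Vert ABv\Vert=\Vert A(Bv)\Vert\leqslant\sigma_k(A)\Vert Bv\Vert\leqslant\sigma_k(A)\Vert B\Vert\Vert v\Vert$, and the same conclusion follows.

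The lower bounds are proved dually. For $\sigma_k(AB)\geqslant m(A)\sigma_k(B)$, I would choose a $k$-plane $W$ with $m(B|_W)=\sigma_k(B)$; then for every $v\in W$, $\Vert ABv\Vert\geqslant m(A)\Vert Bv\Vert\geqslant m(A)\sigma_k(B)\Vert v\Vert$, giving $m((AB)|_W)\geqslant m(A)\sigma_k(B)$ and hence the bound via the $\max$-formula. For $\sigma_k(AB)\geqslant\sigma_k(A)m(B)$, I take $W'$ attaining the maximum for $A$ and set $W:=B^{-1}W'$, again a $k$-plane by invertibility of $B$; for $v\in W$ I write $w=Bv\in W'$ and combine $\Vert Aw\Vert\geqslant\sigma_k(A)\Vert w\Vert$ with $\Vert w\Vert=\Vert Bv\Vert\geqslant m(B)\Vert v\Vert$ to get $\Vert ABv\Vert\geqslant\sigma_k(A)m(B)\Vert v\Vert$.

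There is no real obstacle here: the whole argument is a routine exercise in the variational principle. The one small subtlety, and the reason the hypothesis restricts to $A,B\in\mathrm{GL}(d,\mathbb{R})$ rather than general matrices, is that the two mixed estimates pass through the auxiliary subspaces $B^{-1}V'$ and $B^{-1}W'$, whose dimensions rely on $B$ being invertible. One could extend to degenerate maps by using the identity $\sigma_k(\psi)=\sigma_k(\psi^*)$, but this is not needed since the observation is only applied to elements of $\mathrm{GL}(d,\mathbb{R})$ throughout the paper.
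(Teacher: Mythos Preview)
Your proof is correct and is exactly the argument the paper has in mind: the observation is stated without proof, with only the remark that it ``follows from the geometric definition of singular values,'' i.e., the variational formula you use. Your handling of the two mixed inequalities via the pullback subspaces $B^{-1}V'$ and $B^{-1}W'$ is the natural way to carry this out, and your comment on why invertibility of $B$ is needed there is apt.
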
\ \\ 

\subsection{Angles in Grassmannians}\ \\

Let $(\mathbb{R}^d,\langle\cdot,\cdot\rangle)$ be the real vector space of dimension $d$ equipped with an inner product and let $\Vert \cdot\Vert $ denote the norm defined by the inner product $\langle\cdot,\cdot\rangle$. Given two nonzero vectors $v,w\in \mathbb{R}^d$, the angle between $v,w$ is $\angle (v,w)=\arccos \dfrac{\langle v,w\rangle}{\Vert v\Vert \cdot\Vert w\Vert }$.\\

\begin{definition}
    Let $V,W\in \mathrm{Gr}_k(\mathbb{R}^d)$. We define the distance between $V$ and $W$ to be $$d(V,W)=\max \{\max_{v\in V,v\neq 0} \min_{w\in W,w\neq 0} \sin \angle(v,w),\max_{w\in W,w\neq 0} \min_{v\in V,v\neq 0} \sin \angle(v,w)\}$$
\end{definition}

Together with this distance, $\mathrm{Gr}_k(\mathbb{R}^d)$ is a compact metric space.\\

The proof of the following lemma can be found in \cite[Appendix~A.3]{BPS}.

\begin{lemma}\label{estsingularvalue}
    Let $A,B\in \mathrm{GL}(d,\mathbb{R})$ with $A$ and $AB$ having gaps of index $k$. Then 
    \begin{itemize}
        \item[(1).] For any unit vector $v\in\mathbb{R}^d$, we have
        $$ \Vert Av\Vert \geqslant \sigma_k(A) \sin \angle (v,S_{d-k}(A)),$$
        $$ \Vert A^{-1}v\Vert \geqslant \sigma_{k+1}(A)^{-1} \sin \angle (v,U_{k}(A));$$
        \item[(2).] For any $Q\in \mathrm{Gr}_{d-k}(\mathbb{R}^d)$, $P\in \mathrm{Gr}_k(\mathbb{R}^d)$, we have 
        $$ \Vert A|_Q\Vert \geqslant \sigma_k(A) d(Q,S_{d-k}(A)),$$
        $$ \Vert A^{-1}|_P\Vert \geqslant \sigma_{k+1}(A)^{-1} d(P,U_{k}(A));$$
        \item[(3).] $d(U_k(A),U_k(AB))\leqslant \Vert B\Vert \cdot\Vert B^{-1}\Vert \cdot \dfrac{\sigma_{k+1}(A)}{\sigma_k(A)};$
        \item[(4).] $d(B\cdot U_k(A),U_k(BA))\leqslant \Vert B\Vert \cdot\Vert B^{-1}\Vert \cdot \dfrac{\sigma_{k+1}(A)}{\sigma_k(A)} .$
    \end{itemize}
\end{lemma}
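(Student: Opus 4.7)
The strategy is to reduce everything to the singular value decomposition (SVD) of $A$, handling the four parts in sequence so that each builds on the previous. None of the estimates require anything beyond the SVD and Observation \ref{SingluarvalueEstimate}; the paper also cites \cite{BPS} for a detailed treatment.

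For (1), I would fix an SVD $A = U\Sigma V^T$ and decompose a unit vector $v$ as $v = v_\parallel + v_\perp$ with $v_\parallel \in S_{d-k}(A)^\perp$ and $v_\perp \in S_{d-k}(A)$. The images $Av_\parallel \in U_k(A)$ and $Av_\perp \in U_k(A)^\perp$ are orthogonal, so $\Vert Av\Vert^2 \geqslant \Vert Av_\parallel\Vert^2 \geqslant \sigma_k(A)^2 \Vert v_\parallel\Vert^2 = \sigma_k(A)^2 \sin^2\angle(v, S_{d-k}(A))$, which gives the first bound. The second bound follows by applying the first to $A^{-1}$ at index $d-k$, after verifying the identities $\sigma_{d-k}(A^{-1}) = \sigma_{k+1}(A)^{-1}$ and $S_k(A^{-1}) = U_k(A)$. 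Part (2) is then immediate from (1) by taking the supremum over unit vectors in $Q$ (resp.\ $P$) and invoking principal-angle symmetry: for equidimensional subspaces $V, W$ of $\mathbb{R}^d$, one has $\max_{v \in V,\ \Vert v\Vert=1}\sin\angle(v, W) = d(V, W)$.

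For (3), given a unit vector $w \in U_k(AB)$, I would use the characterization $m(AB|_{(AB)^{-1}U_k(AB)}) = \sigma_k(AB)$ to write $w = ABu$ with $\Vert u\Vert \leqslant \sigma_k(AB)^{-1}$. Setting $v = Bu$, we get $Av = w$ and $\Vert v\Vert \leqslant \Vert B\Vert/\sigma_k(AB)$, and the SVD of $A$ (the same decomposition as in (1)) yields $\sin\angle(w, U_k(A)) = \Vert P_{U_k(A)^\perp}(Av)\Vert \leqslant \sigma_{k+1}(A)\Vert v\Vert$. Observation \ref{SingluarvalueEstimate}, via $\sigma_k(AB) \geqslant \sigma_k(A)m(B) = \sigma_k(A)/\Vert B^{-1}\Vert$, converts $\sigma_k(AB)^{-1}$ to $\Vert B^{-1}\Vert/\sigma_k(A)$. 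Equidimensionality of $U_k(A)$ and $U_k(AB)$ promotes this one-sided sine bound to the symmetric gap distance.

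For (4), I would parametrize a unit $w \in B\cdot U_k(A)$ as $w = BAv/\Vert BAv\Vert$ with $v$ a unit vector in $S_{d-k}(A)^\perp$, so that $(BA)^{-1}w = v/\Vert BAv\Vert$ has norm at most $\Vert B^{-1}\Vert/\sigma_k(A)$ by the lower bound $\Vert BAv\Vert \geqslant m(B)\sigma_k(A)$. Applying the second estimate of (1) to $BA$ and using $\sigma_{k+1}(BA) \leqslant \Vert B\Vert\sigma_{k+1}(A)$ from Observation \ref{SingluarvalueEstimate} then gives the claimed inequality directly. The only real obstacle is bookkeeping: tracking which subspace identities relating $U_k$ and $S_{d-k}$ hold between $A$ and $A^{-1}$, and chaining the sub-multiplicative singular value inequalities in the correct direction so that the bound depends on the gap $\sigma_{k+1}(A)/\sigma_k(A)$ rather than the gap of the product.
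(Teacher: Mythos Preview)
Your proof is correct and follows the standard SVD-based approach. Note that the paper itself does not prove this lemma but simply refers the reader to \cite[Appendix~A.3]{BPS}; your argument is essentially the one found there.
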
\ \\

\section{Dominated representations on subsets and Anosov representations over closed subflows}\label{equivalence1}
In this section, we always assume that $\Gamma$ is a hyperbolic group with a fixed finite, symmetric generating set $S$. Let $\Cay(\Gamma,S)$ denote the Cayley graph of $\Gamma$. Let $\partial \Gamma$ denote the visual boundary of $\Gamma$ and $\partial^{(2)}\Gamma=\{ (x,y)|x,y\in \partial \Gamma,x\ne y\}$. We introduce the notions of dominated representations on subsets of $\Gamma$ and Anosov representations over closed, $\Gamma$-invariant subflows. Then we show the equivalence between them, i.e., the equivalence between (1) and (3) in Theorem \ref{MT1}. The idea of the proof is motivated by Bochi, Potrie and Sambarino's proof for \cite[Lemma~4.4]{BPS}, which deals with the classical Anosov representations.\\

\subsection{$k$-dominated representations on $\Gamma_P^+$}\label{kdom}\ \\

Let $P$ be a closed, $\Gamma$-invariant subset of $\partial^{(2)}\Gamma$. Let $G_P$ be the set of geodesics $l:\mathbb{R}\rightarrow \Cay(\Gamma,S)$ such that $l(0)=\id$ and $(l(+\infty),l(-\infty))\in P$ and let $$\Gamma_P^+=\{ l(n)\in \Gamma\ | \ l\in G_P\ \text{and}\ n\in\mathbb{Z}_+ \}.$$ Similarly, define $$\Gamma_P^-=\{ l(-n)\in \Gamma\ | \ l\in G_P\ \text{and}\ n\in\mathbb{Z}_+ \}.$$

\begin{definition}\label{defdomprop} Let $M$ be a subset of $\Gamma$.
    A representation $\rho: \Gamma\to \mathrm{GL}(d,\mathbb{R})$ is \emph{$k$-dominated on $M$} if there exist constants $C,\lambda>0$, such that for all $\gamma \in M$, $$\dfrac{\sigma_{k+1}(\rho(\gamma))}{\sigma_k(\rho(\gamma))}\leqslant Ce^{-\lambda|\gamma|},$$ where $|\gamma|$ is the distance between $\gamma$ and the identity in the Cayley graph, i.e., the word length of $\gamma$, and $\sigma_k(\rho(\gamma))$ is the $k$-th singular value of $\rho(\gamma)$.\\
\end{definition}

The definition of $\rho$ being $k$-dominated on $M$ does not depend on the choice of the generating set $S$.\\

We will need the following lemma in the proof later, which allows us to enlarge the set of $k$-dominated elements by providing weaker constants. The lemma directly comes from Observation \ref{SingluarvalueEstimate}.

\begin{lemma}\label{conjextending}
    Let $B$ be a finite subset of $\Gamma$. If $\rho$ is $k$-dominated on $\Gamma_{P}^+$, then there exist constants $C'$ and $\lambda'>0$ such that for any $\omega \in \Gamma_{P}^+$ and $\beta\in B$, we have $$\dfrac{\sigma_{k+1}(\rho(\beta\omega\beta^{-1}))}{\sigma_k(\rho(\beta\omega\beta^{-1}))} \leqslant C'e^{-\lambda'|\beta\omega\beta^{-1}|}.$$
\end{lemma}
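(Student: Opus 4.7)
The plan is to reduce the claim to Observation~\ref{SingluarvalueEstimate}, using the triangle inequality in the Cayley graph to compare the word lengths $|\omega|$ and $|\beta\omega\beta^{-1}|$, and then absorbing all $\beta$-dependence into a single multiplicative constant by finiteness of $B$.

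First I would fix $\omega \in \Gamma_P^+$ and $\beta \in B$, and write $\rho(\beta\omega\beta^{-1}) = \rho(\beta)\rho(\omega)\rho(\beta^{-1})$. Applying Observation~\ref{SingluarvalueEstimate} twice would give the upper bound
$$\sigma_{k+1}(\rho(\beta\omega\beta^{-1})) \leqslant \norm{\rho(\beta)}\,\norm{\rho(\beta^{-1})}\,\sigma_{k+1}(\rho(\omega))$$
and the lower bound
$$\sigma_k(\rho(\beta\omega\beta^{-1})) \geqslant m(\rho(\beta))\,m(\rho(\beta^{-1}))\,\sigma_k(\rho(\omega)).$$
Dividing these and invoking the $k$-dominated hypothesis on $\Gamma_P^+$ with constants $C,\lambda>0$ would yield
$$\frac{\sigma_{k+1}(\rho(\beta\omega\beta^{-1}))}{\sigma_k(\rho(\beta\omega\beta^{-1}))} \leqslant \frac{\norm{\rho(\beta)}\,\norm{\rho(\beta^{-1})}}{m(\rho(\beta))\,m(\rho(\beta^{-1}))}\cdot Ce^{-\lambda|\omega|}.$$

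Next I would invoke the finiteness of $B$: since every $\rho(\beta)$ lies in $\mathrm{GL}(d,\mathbb{R})$, the four quantities $\norm{\rho(\beta)}$, $\norm{\rho(\beta^{-1})}$, $m(\rho(\beta))$, $m(\rho(\beta^{-1}))$ are all finite and strictly positive, and the coefficient in front is therefore bounded above by a constant $K=K(B,\rho)$. Setting $L = \max_{\beta\in B}|\beta|$, the triangle inequality in $\Cay(\Gamma,S)$ gives $|\beta\omega\beta^{-1}| \leqslant |\omega| + 2L$, hence $e^{-\lambda|\omega|} \leqslant e^{2\lambda L}e^{-\lambda|\beta\omega\beta^{-1}|}$. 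Combining the two bounds would establish the lemma with $C' := CKe^{2\lambda L}$ and $\lambda' := \lambda$.

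There is no genuine obstacle in this argument; the only mild care required is to apply Observation~\ref{SingluarvalueEstimate} in the correct direction on each side (upper bound on $\sigma_{k+1}$ via operator norms, lower bound on $\sigma_k$ via co-norms), and to note that the only property of $B$ used is its finiteness, so the lemma really says that a $k$-dominated representation on $\Gamma_P^+$ remains $k$-dominated, with perhaps weaker constants, after bounded conjugation.
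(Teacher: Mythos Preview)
Your argument is correct and is exactly what the paper intends: the paper's proof simply states that the lemma ``directly comes from Observation~\ref{SingluarvalueEstimate},'' and you have spelled out precisely that derivation, using the upper and lower bounds from the observation together with the finiteness of $B$ and the triangle inequality on word length.
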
\ \\

\subsection{$k$-Anosov representations over $S_P$}\label{section3.2}\ \\

Firstly, we define the Gromov geodesic flow abstractly following the notations of Bochi--Potrie--Sambarino \cite{BPS}. Define $$\widetilde{U\Gamma}=\partial^{(2)}\Gamma\times \mathbb{R}=\{(x,y,s)|x,y\in \partial \Gamma,x\ne y,s \in\mathbb{R}\}$$ and define the lifted geodesic flow on $\widetilde{U\Gamma}$ by $\widetilde{\phi}^{t}(x,y,s)=(x,y,s+t)$. A \emph{cocycle} of the flow is a map $c:\Gamma\times\partial^{(2)}\Gamma \to \mathbb{R}$, such that $$c(\gamma\gamma',x,y)=c(\gamma,\gamma'x,\gamma'y)+c(\gamma',x,y).$$

By Gromov \cite{Gromov}, Mathéus \cite{Matheus}, Champetier \cite{Cham} and Mineyev \cite{Mineyev}, we have the following theorem.

\begin{theorem}[Gromov, Mathéus, Champetier, Mineyev]\label{metricunittangentbundle}
    There exists a metric $d$ on $\widetilde{U\Gamma}$ and a cocycle $c$, such that 
    \begin{itemize}
        \item[(1).] The action of $\Gamma$ on $\widetilde{U\Gamma}$ defined by $\gamma\cdot(x,y,s) =(\gamma x,\gamma y,s+c(x,y,\gamma))$ is properly discontinuous, cocompact and by isometries;
        \item[(2).] There exist constants $\lambda_1\geqslant 1$ and $\epsilon_1\geqslant 0$, such that each curve $\{ (x,y,t)\in \widetilde{U\Gamma}\ |\ t\in\mathbb{R} \}$ is a $(\lambda_1,\epsilon_1)$-quasi-geodesic for any $(x,y)\in\partial^{(2)}\Gamma$;
        \item[(3).] The $\mathbb{Z}/2\mathbb{Z}$ action on $\widetilde{U\Gamma}$ defined by $(x,y,t)\rightarrow (y,x,-t)$ for any $(x,y)\in\partial^{(2)}\Gamma$ commutes with the $\Gamma$ action;
        \item[(4).] $c$ is positive, i.e., $c(\gamma^+,\gamma^-,\gamma)>0$ for any $\gamma\in\Gamma$ of infinite order, where $\gamma^+$ and $\gamma^-$ denote the attractor and repeller of $\gamma$ respectively;
        \item[(5).] $\phi^t$ is bi-Lipschitz.
\end{itemize}
\end{theorem}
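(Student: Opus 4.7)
The plan is to construct an explicit metric and cocycle following the approach of Mineyev, building on the earlier contributions of Gromov, Math\'eus and Champetier. The starting point is a refinement of the Gromov product: one builds a $\Gamma$-equivariant ``double difference'' function $\langle\cdot,\cdot\rangle: \partial^{(2)}\Gamma\times \partial^{(2)}\Gamma \to \mathbb{R}$ that is a genuine, not merely coarse, invariant of the action. The main technical idea is to begin with the naive Gromov product on $\Cay(\Gamma, S)$, which is only quasi-equivariant, and then apply an averaging operation (Mineyev's ``hat'' homological bicombing) that smoothes out the coarse error while preserving quasi-isometry to the original product.

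The metric on $\widetilde{U\Gamma} = \partial^{(2)}\Gamma \times \mathbb{R}$ is then defined so that each fiber $\{(x,y)\}\times \mathbb{R}$ carries the Euclidean metric on $\mathbb{R}$, while transverse distances between $(x,y,s)$ and $(x',y',s)$ decay exponentially as $s\to\pm\infty$ when the pairs share a common endpoint, modelling the ``distance between nearby geodesics'' picture in $\Cay(\Gamma,S)$. The cocycle $c(\gamma, x, y)$ is then defined as the shift in the $\mathbb{R}$-parameter induced by $\gamma$: it measures the signed distance by which the chosen basepoint on the bi-infinite geodesic joining $x$ and $y$ moves under $\gamma$, analogous to the difference of two horofunctions.

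Given these data, I would verify each property in turn. For (1), proper discontinuity and cocompactness reduce to the analogous properties of $\Gamma$ acting on parameterized geodesics in $\Cay(\Gamma, S)$, via the fellow traveller property; the \emph{isometry} part is exactly what the averaging construction is designed to provide. For (2), each flow line $\{(x,y,t) : t\in\mathbb{R}\}$ is modelled on a geodesic in the Cayley graph, so the quasi-geodesic constants come from the quasi-isometric identification of the new metric with the Cayley graph metric restricted to geodesics. For (3), the involution $(x,y,t)\mapsto(y,x,-t)$ commutes with the $\Gamma$-action provided the double difference is symmetrized in the initial construction, which can be arranged by hand. For (4), positivity of $c$ on an infinite-order $\gamma$ reflects the fact that $\gamma$ acts as a nontrivial translation on its axis, with translation length equal to the stable length. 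For (5), bi-Lipschitzness of $\phi^t$ follows directly from the construction, since the $\mathbb{R}$-direction is parameterized isometrically and the transverse coordinates are unaffected by the flow.

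The main obstacle, and the reason this theorem aggregates contributions spanning several decades, is property (1): obtaining a strictly $\Gamma$-equivariant metric, since the natural Gromov product is only quasi-invariant under $\Gamma$. Mineyev's contribution resolves this by averaging over a carefully chosen homological filling in $\Gamma$, replacing coarse invariance by strict invariance while retaining enough large-scale geometry to keep flow lines quasi-geodesic and the action cocompact. A secondary subtle point is coordinating the averaging with the flip in (3) and the cocycle identity $c(\gamma\gamma',x,y) = c(\gamma, \gamma'x, \gamma'y) + c(\gamma', x, y)$, which in my plan forces the basepoint choice on each geodesic $(x,y)$ to be made $\Gamma$-equivariantly up to a coboundary; once this bookkeeping is in place, (2)--(5) follow relatively directly from the construction.
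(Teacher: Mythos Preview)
The paper does not prove this theorem; it simply cites it as a known result due to Gromov, Math\'eus, Champetier, and Mineyev, and then uses it as a black box. So there is no proof in the paper to compare your proposal against.

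That said, your sketch is a faithful high-level summary of Mineyev's construction and correctly identifies the crux: the naive Gromov-product-based metric is only coarsely $\Gamma$-invariant, and the key technical contribution (Mineyev's averaging via a homological bicombing) upgrades this to strict invariance while keeping the large-scale geometry intact. Your account of how properties (2)--(5) then follow is accurate. One minor point: for (5), bi-Lipschitzness of $\phi^t$ is not entirely trivial from ``the $\mathbb{R}$-direction is isometric and transverse coordinates are unaffected''---the transverse part of the metric does depend on $s$ (that is the whole point of the exponential decay you mention), so one must check that this $s$-dependence is Lipschitz, which it is by construction. If you were writing this up in full, you would refer the reader to Mineyev's paper (especially his Theorem~60) rather than reproduce the averaging argument.
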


Fix a base point $p\in \widetilde{U\Gamma}$. Then by Theorem 60(c) in \cite{Mineyev}, there exist constants $\lambda_2\geqslant 1$ and $\epsilon_2\geqslant 0$, such that the orbit map $\tau_p: \Cay(\Gamma,S)\rightarrow \widetilde{U\Gamma}$, defined by $\tau_p(\gamma)=\gamma p$ for all $\gamma\in\Gamma$, then extending linearly to the edges of $\Cay(\Gamma,S)$, is a $(\lambda_2,\epsilon_2)$-quasi-isometry. In particular, $\widetilde{U\Gamma}$ is hyperbolic, there is a homeomorphism $\partial\Gamma\simeq \partial \widetilde{U\Gamma}$, and with this identification, $(x,y,+\infty)=x$ and $(x,y,-\infty)=y$ by Theorem \ref{metricunittangentbundle} (4).\\

We denote the $\mathbb{Z}/2\mathbb{Z}$-action on $\widetilde{U\Gamma}$ in Theorem \ref{metricunittangentbundle} (3) by $z\mapsto \hat{z}$. Let $U\Gamma=\widetilde{U\Gamma}/\Gamma$ be the quotient and let $\phi$ denote the geodesic flow induced by $\widetilde{\phi}$ on $U\Gamma$. Then the $\mathbb{Z}/2\mathbb{Z}$-action descends to $U\Gamma$.\\

Intuitively, we may regard $\partial^{(2)}\Gamma$ as the collection of geodesics of $\Cay(\Gamma,S)$ and $U\Gamma$ as the unit tangent bundle of $\Cay(\Gamma,S)/\Gamma$. These are not exact since the geodesic between two distinct points in $\partial \Gamma$ may not be unique. However, in the sense of coarse geometry, we can still keep these intuitions in mind due to the Fellow Traveler Property (see Proposition \ref{FTP} and Proposition \ref{FTPs}).\\

Now, we give the definition of Anosov representations over certain subflows.\\

Let $\widetilde{E}=\widetilde{U\Gamma}\times\mathbb{R}^d$ be the trivial $\mathbb{R}$-vector bundle of rank $d$ over $\widetilde{U\Gamma}$ and define the flow on the vector bundle $\widetilde{E}$  to be the automorphism $\widetilde{\psi}^t$ such that $\widetilde{\psi}^t((x,y,s),v)=(\widetilde{\phi}^t(x,y,s),v)$ for any $(x,t,s)\in \widetilde{U\Gamma}$ and $v\in \mathbb{R}^d$. For a representation $\rho: \Gamma\rightarrow \mathrm{GL}(d,\mathbb{R})$, the $\Gamma$ action on $\widetilde{E}$ induced by $\rho$ is defined to be $\gamma\cdot(x,y,s,v)=(\gamma\cdot(x,y,s),\rho(\gamma)v).$ We denote $E_\rho= \widetilde{E} / \Gamma$. As $\widetilde{\psi}$ commutes with the $\Gamma$ action, we denote $\psi$ the flow on $E_\rho$ induced by $\widetilde{\psi}$.\\

Let $P$ be a closed, $\Gamma$-invariant subset of $\partial^{(2)}\Gamma$. Let $\widetilde{S_P}= P \times \mathbb{R} \subset \widetilde{U\Gamma}$. Notice that $\widetilde{S_P}$ is $\Gamma$-invariant and $\widetilde{\phi}$-invariant. We also denote $\widetilde{E}|_{\widetilde{S_P}}=\widetilde{S_P}\times\mathbb{R}^d$, $S_P=\widetilde{S_P}/ \Gamma$ and $E_\rho|_{S_P}=\widetilde{E}|_{\widetilde{S_P}}/ \Gamma$.\\

Here are two (Riemannian) metrics on the vector bundle $\widetilde{E}$ (i.e., continuous families of inner products on the fibers of $\widetilde{E}$,) we will consider.

\begin{notation}\label{twonorm}\ \\
\begin{itemize}
    \item[(1).] Endow each fiber of the trivial bundle $\widetilde{E}$ with the standard inner product on $\mathbb{R}^d$. Denote the norm induced by this metric by $\Vert \cdot\Vert_0$. Observe that $\Vert \cdot\Vert_0$ is $\widetilde{\psi}^t$-invariant, i.e. $\Vert v \Vert_{0,z} = \Vert \psi^t(v) \Vert_{0,\phi^t(z)}=\Vert v \Vert_{0,\phi^t(z)}$ for any $v\in \mathbb{R}^d$, $z\in \widetilde{U\Gamma}$ and $t\in\mathbb{R}$. Hence, for convenience, we omit the subscripts of points for $\Vert \cdot\Vert_0$. The metric will be used to measure the singular values of matrix $\rho(\gamma)$ for group elements $\gamma$.\\
    \item[(2).] Fix a $\mathbb{Z}/2\mathbb{Z}$-invariant Riemannian metric on $E_\rho$. Then it lifts to a $\Gamma$-invariant and also $\mathbb{Z}/2\mathbb{Z}$-invariant metric on $\widetilde{E}$, where the norm is denoted by $\Vert \cdot\Vert$. Since it comes from $E_\rho$, we use it to define the singular values of the flow ${\psi}$.\\
\end{itemize}
\end{notation}

\begin{definition}\label{DefAnosovoverSp}
    A representation $\rho: \Gamma\rightarrow \mathrm{GL}(d,\mathbb{R})$ is \emph{$k$-Anosov over ${S_P}$} if there exist constants $C,\lambda>0$, such that for any $q \in{S_P}$ and $t\in \mathbb{R}_+$, the linear map $\psi^t_q:E_{\rho,q}\to E_{\rho,\phi(q)}$ satisfies $$\dfrac{\sigma_{d-k+1}(\psi^t_q)}{\sigma_{d-k}(\psi^t_q)}\leqslant Ce^{-\lambda t}.$$
\end{definition}

Since $S_P$ is compact, this definition does not depends on the choice of $\Vert \cdot\Vert$.

\begin{remark}
    Let $\hat{P}=\{(y,x) \ |\ (x,y)\in P\}$. Notice that $(\psi^t_q)^{-1} = \psi^{-t}_{\phi^t(q)}=\psi^t_{\phi^{-t}(\hat{q})}$ if we identify $E_{\rho,q}$ with $E_{\rho,\hat{q}}$ for any $q\in \widetilde{U\Gamma}$ and $t\in\mathbb{R}$. Hence, $\rho$ is $k$-Anosov over $S_P$ if and only if it is $(d-k)$-Anosov over $S_{\hat{P}}$. In general, $\rho$ being $(d-k)$-Anosov over $S_P$ and $\rho$ being $k$-Anosov over $S_P$ are not equivalent. See Section \ref{directedanosov} for example. When $P=\partial^{(2)}\Gamma$, we say $\rho$ is $k$-Anosov, which coincides with the notion of classical Anosov representations.
\end{remark}\ \\

\subsection{The equivalence}\label{equivalence2}\ \\

Now we are ready to prove the equivalence between (1) and (3) in Theorem \ref{MT1}.

\begin{theorem}\label{mainthm1}
    Let $P$ be a closed, $\Gamma$-invariant subset of $\partial^{(2)}\Gamma$. Then $\rho$ is $k$-Anosov over $S_P$ if and only if $\rho$ is $k$-dominated on $\Gamma_P^+$.
\end{theorem}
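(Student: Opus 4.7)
The plan is to establish a two-way dictionary between pairs $(q_0,t)\in S_P\times\mathbb{R}_{>0}$ and group elements $\gamma\in\Gamma$, under which the relevant singular value ratios and length scales correspond up to uniform multiplicative constants. First I would fix a compact fundamental domain $K\subset\widetilde{S_P}$ for the $\Gamma$-action (possible since $S_P$ is compact) containing a neighborhood of the base point $p$. On $K$ the $\Gamma$-invariant metric $\Vert\cdot\Vert$ of Notation \ref{twonorm}(2) is uniformly commensurable with the standard norm $\Vert\cdot\Vert_0$ on $\mathbb{R}^d$, while the $\Gamma$-invariance reads $\Vert v\Vert_{\gamma\widetilde{q}}=\Vert\rho(\gamma)^{-1}v\Vert_{\widetilde{q}}$. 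For a lift $\widetilde{q}_0\in K$ of $q_0\in S_P$ and $t>0$, set $\widetilde{q}_t=\widetilde{\phi}^t(\widetilde{q}_0)$ and pick $\gamma\in\Gamma$ with $\gamma^{-1}\widetilde{q}_t\in K$. Using $\widetilde{q}_0$ and $\gamma^{-1}\widetilde{q}_t$ to trivialize the fibers of $E_\rho$ at $q_0$ and $q_t$, the map $\psi^t_{q_0}$ corresponds to $\rho(\gamma)^{-1}$, so
$$\dfrac{\sigma_{d-k+1}(\psi^t_{q_0})}{\sigma_{d-k}(\psi^t_{q_0})}\asymp\dfrac{\sigma_{k+1}(\rho(\gamma))}{\sigma_k(\rho(\gamma))}.$$
Lengths match because the flow is bi-Lipschitz (Theorem \ref{metricunittangentbundle}(5)) and the orbit map $\tau_p$ is a quasi-isometry, so $t\asymp d(\widetilde{q}_0,\widetilde{q}_t)\asymp d(\widetilde{q}_0,\gamma\widetilde{q}_0)\asymp|\gamma|$, all up to constants depending only on $K$.

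For the direction ``Anosov implies dominated'', given $\gamma\in\Gamma_P^+$ with associated geodesic $l\in G_P$ and endpoints $(x,y)\in P$, I would apply Proposition \ref{FTPs} to the two quasi-geodesics $s\mapsto(x,y,s)$ and $s\mapsto\tau_p(l(s))$ (which share endpoints $x,y\in\partial\Gamma$) to produce $s_0\in\mathbb{R}$ with $(x,y,s_0)$ near $\tau_p(\id)=p$ and $t>0$ (with $t\asymp|\gamma|$) such that $(x,y,s_0+t)$ is near $\tau_p(\gamma)$. Translating the first point into $K$ places us in the setup of the dictionary, and the $k$-Anosov bound on $\psi^t_{q_0}$ delivers the required decay of $\sigma_{k+1}(\rho(\gamma))/\sigma_k(\rho(\gamma))$.

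For the reverse direction, given $(q_0,t)$ the dictionary produces some $\gamma\in\Gamma$ which a priori only lies in the enlarged set $\Gamma^+_{P,(\lambda,\epsilon,b)}$: indeed the $\tau_p$-pullback of the orbit line through $\widetilde{q}_0$ is a quasi-geodesic in $\Cay(\Gamma,S)$ with endpoints in $P$ passing within bounded distance of $\id$ and of $\gamma$. The main obstacle is then the purely geometric descent from $\Gamma^+_{P,(\lambda,\epsilon,b)}$ to $\Gamma_P^+$ itself. I would apply Proposition \ref{FTP} to replace this quasi-geodesic by a true geodesic $m$ at bounded Hausdorff distance and then use the $\Gamma$-invariance of $P$ to shift $m$ by some $\eta\in\Gamma$ of uniformly bounded word length, so that $\eta^{-1}m$ passes through $\id$ with endpoints still in $P$, i.e., $\eta^{-1}m\in G_P$. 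This produces a decomposition $\gamma=\eta\gamma_0\beta$ with $\gamma_0\in\Gamma_P^+$ and $\eta,\beta$ of uniformly bounded word length (the bound on $\beta$ controls the offset between a vertex of $m$ closest to $\gamma$ and $\gamma$ itself). Observation \ref{SingluarvalueEstimate} together with Lemma \ref{conjextending} then transfers $k$-domination on $\Gamma_P^+$ to the bound on $\sigma_{k+1}(\rho(\gamma))/\sigma_k(\rho(\gamma))$, hence via the dictionary to the $k$-Anosov inequality for $\psi^t_{q_0}$. The most delicate point is to invoke the oriented fellow traveler property (Proposition \ref{FTPs}) to guarantee that $\gamma_0$ actually lies in the positive ray $\Gamma_P^+$ rather than in $\Gamma_P^-$, and to ensure the comparability of $|\gamma_0|$ with $|\gamma|\asymp t$ so that the exponential decay is preserved through the decomposition.
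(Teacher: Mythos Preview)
Your proposal is correct and follows essentially the same approach as the paper. The paper packages the two-way dictionary into Lemmas~\ref{disrelation} and~\ref{equiesti} and then runs the same norm comparison argument you describe; the only organizational difference is that for the direction ``dominated implies Anosov'' the paper constructs from the outset an element of the form $\gamma=\beta\omega\beta^{-1}$ with $\omega\in\Gamma_P^+$ and $\beta$ in a fixed finite set (so that Lemma~\ref{conjextending} applies directly), whereas you first pick any $\gamma$ bringing $\widetilde{q}_t$ back into $K$ and then decompose it as $\eta\gamma_0\beta$ with bounded flanks, which is handled by Observation~\ref{SingluarvalueEstimate} alone. One minor point: your fundamental domain $K$ should sit inside $\widetilde{U\Gamma}$ rather than $\widetilde{S_P}$, since the base point $p$ need not lie on $\widetilde{S_P}$; the paper works with $K_0\cap\widetilde{S_P}$ for exactly this reason.
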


We fix a compact set $K_0\subset \widetilde{U\Gamma}$ such that $\Gamma\cdot K_0=\widetilde{U\Gamma}$ and a base point $p\in K_0$. By Theorem \ref{metricunittangentbundle}, there are constants $\lambda_1,\lambda_2\geqslant 1$ and $\epsilon_1,\epsilon_2\geqslant 0$, such that $\{ (x,y,t)\in \widetilde{U\Gamma}\ |\ t\in\mathbb{R} \}$ is a $(\lambda_1,\epsilon_1)$-quasi-geodesic for any $(x,y)\in\partial^{(2)}\Gamma$ and the orbit map $\tau_p: \Cay(\Gamma,S)\rightarrow \widetilde{U\Gamma}$ is a $(\lambda_2,\epsilon_2)$-quasi-isometry.\\

To prove the theorem, we use the following pair of lemmas. The first one describes the relation of $t\in\mathbb{R}$ and the word length of $\gamma\in\Gamma$ when $\widetilde{\phi}^t(z)$ and $\gamma z$ are close for some $z\in \widetilde{U\Gamma}$.

\begin{lemma}\label{disrelation}
    For any compact set $K\subset \widetilde{U\Gamma}$ with $p\in K$, there exist constants $a,b>0$ such that for any $z\in K$, if $t\in \mathbb{R}$ and $\gamma\in \Gamma$ satisfy $\gamma^{-1}\widetilde{\phi}^t(z)\in K$, then $a^{-1}|t|-b\leqslant|\gamma|\leqslant a|t|+b$. 
\end{lemma}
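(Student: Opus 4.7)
The plan is to bound $|\gamma|$ in terms of $|t|$ (and vice versa) by comparing the distance $d(p, \gamma p)$ with both $|\gamma|$ (via the quasi-isometric orbit map $\tau_p$) and $|t|$ (via the quasi-geodesic flowlines), using the hypothesis $\gamma^{-1}\widetilde{\phi}^t(z) \in K$ to control all the relevant error terms by the diameter of $K$.

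First I would set $D = \diam(K)$ (finite since $K$ is compact). The hypothesis $\gamma^{-1}\widetilde{\phi}^t(z) \in K$ together with $p \in K$ and the fact that $\Gamma$ acts on $\widetilde{U\Gamma}$ by isometries (Theorem \ref{metricunittangentbundle}(1)) gives $d(\widetilde{\phi}^t(z), \gamma p) \leqslant D$. Combined with $d(z, p) \leqslant D$, the triangle inequality then yields the two-sided estimate
\[
\bigl| d(p, \gamma p) - d(z, \widetilde{\phi}^t(z)) \bigr| \leqslant 2D.
\]

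Next I would apply the two quasi-isometric controls. By Theorem \ref{metricunittangentbundle}(2), the flowline through $z$ is a $(\lambda_1, \epsilon_1)$-quasi-geodesic, so
\[
\lambda_1^{-1}|t| - \epsilon_1 \;\leqslant\; d(z, \widetilde{\phi}^t(z)) \;\leqslant\; \lambda_1|t| + \epsilon_1.
\]
By the Mineyev quasi-isometry of the orbit map,
\[
\lambda_2^{-1}|\gamma| - \epsilon_2 \;\leqslant\; d(p, \gamma p) \;\leqslant\; \lambda_2|\gamma| + \epsilon_2.
\]
Substituting these two pairs of inequalities into the previous bound $|d(p,\gamma p) - d(z,\widetilde{\phi}^t(z))| \leqslant 2D$ and solving for $|\gamma|$ gives both directions of the desired inequality. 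Concretely, one obtains $|\gamma| \leqslant \lambda_2(\lambda_1|t| + \epsilon_1 + 2D + \epsilon_2)$ and $|\gamma| \geqslant \lambda_2^{-1}(\lambda_1^{-1}|t| - \epsilon_1 - 2D - \epsilon_2)$, so the lemma holds with $a = \lambda_1\lambda_2$ and $b$ depending only on $\lambda_1, \lambda_2, \epsilon_1, \epsilon_2$ and $D$.

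There is no real obstacle here: the proof is a straightforward bookkeeping exercise once one recognizes that both the geodesic flow direction (parametrized by $t$) and the word metric direction (parametrized by $|\gamma|$) are controlled by the single quantity $d(p, \gamma p)$, up to a bounded error governed by $K$. The key conceptual point is simply that $K$ bounded, $\Gamma$ acting by isometries, and the two-sided quasi-isometric estimates of Theorem \ref{metricunittangentbundle} together reduce the entire statement to the triangle inequality.
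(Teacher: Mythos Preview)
Your proof is correct and follows essentially the same argument as the paper: bound $|d(p,\gamma p)-d(z,\widetilde{\phi}^t(z))|$ by a constant depending on $\diam(K)$, then feed in the quasi-geodesic estimates for the flowlines and the orbit map. Your triangle-inequality step even yields a slightly tighter constant ($2D$ versus the paper's $3\diam(K)$), but the structure of the argument is identical.
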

\begin{proof}
    We denote $\diam(K)=sup\{d(z_1,z_2),\ z_1,z_2\in K\}$. Since $d(\gamma z, \widetilde{\phi}^t(z))=d(z, \gamma^{-1}\widetilde{\phi}(z)) \leqslant \diam(K)$ and $d(z,p)=d(\gamma z,\gamma p)\leqslant \diam(K)$, we have $$ |d(p,\gamma p) - d(z,\widetilde{\phi}^t(z))|\leqslant d(z,p)+d(\gamma z,\gamma p)+ d(\gamma z, \widetilde{\phi}^t(z))\leqslant 3\diam(K).$$ By Theorem \ref{metricunittangentbundle} (1), (3) and the Milnor-Svarc Lemma, $\lambda_1^{-1}|t|-\epsilon_1\leqslant d(z,\phi^t(z))\leqslant \lambda_1|t|+\epsilon_1$ and $\lambda_2^{-1}|\gamma|-\epsilon_2 \leqslant d(p, \gamma p) \leqslant  \lambda_2|\gamma|+\epsilon_2$. By $$-3\diam(K)\leqslant d(p,\gamma p) - d(z,\widetilde{\phi}^t(z)) \leqslant 3\diam(K),$$ we have $$-3\diam(K)\leqslant  \lambda_2|\gamma|+\epsilon_2 - \lambda_1^{-1}|t|+\epsilon_1 $$ and $$\lambda_2^{-1}|\gamma|-\epsilon_2 - \lambda_1|t|-\epsilon_1 \leqslant 3\diam(K).$$ Therefore, $$(\lambda_1\lambda_2)^{-1}|t|-\lambda_2^{-1}(\epsilon_1+\epsilon_2+3\diam(K))\leqslant |\gamma|\leqslant (\lambda_1\lambda_2)|t|+\lambda_2(\epsilon_1+\epsilon_2+3\diam(K)).$$
\end{proof}

The second lemma tells that for any $z\in K_0\cap\widetilde{S_P}$ and $t\in\mathbb{R}_+$, we can find $\gamma\in \Gamma _P^+$, such that $\gamma z$ is close to $\widetilde{\phi}^t(z)$, and vice versa.

\begin{lemma}\label{equiesti}
    There exists a compact set $K\subset \widetilde{U\Gamma}$ containing $K_0$ and a finite subset $B$ of $\Gamma$, such that
    \begin{itemize}
        \item[(1).] for any $z\in K_0\cap\widetilde{S_P}$ and $t\in\mathbb{R}_+$, there exists $\gamma\in \{\beta\omega\beta^{-1}\ |\ \beta\in B, \omega\in\Gamma_P^+\}$, such that $\gamma^{-1}\widetilde{\phi}^t(z)\in K$;
        \item[(2).] for any $\gamma'\in \Gamma_P^+$, there exists $z'\in K\cap\widetilde{S_P}$ and $t'\in\mathbb{R}_+$, such that $\gamma'^{-1}\widetilde{\phi}^{t'}(z')\in K$.
    \end{itemize}
\end{lemma}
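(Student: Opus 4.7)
The plan is to set up a correspondence, via the quasi-isometric orbit map $\tau_p:\Cay(\Gamma,S)\to\widetilde{U\Gamma}$ together with the Fellow Traveler Property with orientations (Proposition \ref{FTPs}), between group elements of $\Gamma_P^+$ and points on the forward flow orbits of $\widetilde{S_P}$. The key geometric observation is that any flow orbit $r_0(\cdot)=(x,y,\cdot)$ with $(x,y)\in P$ which meets $K_0$ passes within $\diam(K_0)$ of $p$; by fellow traveling in $\widetilde{U\Gamma}$, any geodesic $l_0$ in $\Cay(\Gamma,S)$ from $y$ to $x$ then passes within some uniform constant $R_2$ of $\id$.

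For part (1), given $z=(x,y,s)\in K_0\cap\widetilde{S_P}$, I would pick a vertex $\beta\in l_0\cap\Gamma$ with $|\beta|\leqslant R_2+1$; the set $B$ of all such possible $\beta$ is finite by properness of the word metric. Reparametrizing $l_0$ so that $l_0(0)=\beta$, the translated geodesic $l:=\beta^{-1}l_0$ lies in $G_P$ by the $\Gamma$-invariance of $P$, so $l_0(m)\in\beta\,\Gamma_P^+$ for every $m\geqslant 0$. For $t\in\mathbb{R}_+$, apply Proposition \ref{FTPs} to the two quasi-geodesics $\tau_p\circ l_0$ and $r_0$ to produce an $m$ with $d(l_0(m)p,\widetilde{\phi}^t(z))\leqslant R$ and with $m$ increasing in $t$; outside a bounded initial range of $t$ this gives $m\geqslant 0$, so that $l_0(m)=\beta\omega$ with $\omega\in\Gamma_P^+$. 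A direct computation yields $(\beta\omega\beta^{-1})^{-1}\widetilde{\phi}^t(z)=\beta\cdot l_0(m)^{-1}\widetilde{\phi}^t(z)$, which lies in $\beta\cdot\bar B(p,R)$; so $K$ should contain $\bigcup_{\beta\in B}\bar B(\beta p,R)$ together with the compact set $\widetilde{\phi}^{[0,T_0]}(K_0\cap\widetilde{S_P})$ needed to absorb the remaining bounded-$t$ values.

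For part (2), given $\gamma'=l(n)\in\Gamma_P^+$ with $l\in G_P$, $n\geqslant 0$ and $(x,y)=(l(+\infty),l(-\infty))\in P$, the quasi-geodesics $\tau_p\circ l$ and $r_0(\cdot)=(x,y,\cdot)$ share both endpoints. Use Proposition \ref{FTP} to find $s'_0\in\mathbb{R}$ with $d(r_0(s'_0),p)\leqslant R_0$, then apply Proposition \ref{FTPs} with $t_1=0$ and $t_2=n$ to obtain $s'_1\geqslant s'_0$ with $d(r_0(s'_1),\gamma' p)\leqslant R$. Setting $z'=r_0(s'_0)\in\widetilde{S_P}$ and $t'=s'_1-s'_0\geqslant 0$ gives the desired pair, after enlarging $K$ to contain the two relevant balls around $p$.

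The main obstacle is the conjugation form $\beta\omega\beta^{-1}$ demanded in (1) rather than the naive $\gamma=\beta\omega$: the latter fails to lie in $\Gamma_P^+$ unless $\beta=\id$, and passing to $\beta\omega\beta^{-1}$ costs a left-multiplication by $\beta$ on the target point, which needs to be absorbed uniformly into $K$; this uniformity is exactly what forces $B$ to be finite and $K$ to be enlarged by $|B|$ translates of the reference ball. A second subtlety is that one must use the orientation-preserving Proposition \ref{FTPs}, not merely \ref{FTP}, in order to guarantee $m\geqslant 0$ (so $\omega\in\Gamma_P^+$) in (1) and $t'\geqslant 0$ in (2).
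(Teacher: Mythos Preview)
Your proposal is correct and follows essentially the same route as the paper: compare the flow line through $z$ with the $\tau_p$-image of a Cayley-graph geodesic sharing its endpoints, then use Proposition~\ref{FTPs} to extract $\beta$ and $\beta\omega$ (and hence $\gamma=\beta\omega\beta^{-1}$) in part~(1), and to produce $z'$ and $t'$ in part~(2). Your identification of the two genuine subtleties---the conjugation form $\beta\omega\beta^{-1}$ forcing a finite $B$ and an enlargement of $K$, and the need for the \emph{oriented} fellow traveler property to secure the signs of $\omega$ and $t'$---matches the paper exactly.

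One small streamlining the paper makes in part~(1): rather than choosing $\beta$ as a vertex of $l_0$ within $R_2+1$ of $\id$ and then having to treat a bounded initial range of $t$ separately, the paper picks $\beta$ directly as a vertex of $L_2$ whose $\tau_p$-image lies within $R_0+\lambda_2+\epsilon_2$ of $z$ (this exists by Proposition~\ref{FTP} applied to the flow line and $\tau_p(L_2)$, plus rounding to a vertex). Since $z\in K_0$ and $p\in K_0$, this $\beta$ automatically lies in a finite set, and now the hypothesis of Proposition~\ref{FTPs} is met on the nose, so one gets $\omega\in\Gamma_P^+$ for every $t>0$ without an exceptional range. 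This lets the paper take $K$ to be a single closed ball $\bar B(p,R_1)$ with $R_1=R_0+R+2\lambda_2+2\epsilon_2+\diam(K_0)$, rather than your union $\bigcup_{\beta\in B}\bar B(\beta p,R)\cup\widetilde{\phi}^{[0,T_0]}(K_0\cap\widetilde{S_P})$. The difference is purely cosmetic.
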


\begin{proof}
    We know that $\widetilde{U\Gamma}$ is $\delta$-hyperbolic for some constant $\delta\geqslant 0$.
    Let $C=\max(\lambda_1,\lambda_2)$ and $\epsilon=\max(\epsilon_1,\epsilon_2)$.
    By Proposition \ref{FTP} and Proposition \ref{FTPs}, there exist constants
    $R(\delta,C,\epsilon)\geqslant R_0\geqslant 0$, where $R_0$ is the upper bound of the Hausdorff distance between two $(C, \epsilon)$-quasi-geodesics
    and $R$ is the order preserving upper bound of the Hausdorff distance between two $(C, \epsilon)$-quasi-geodesics.
    Let $K$ be the closed ball centered at $p$ of radius $R_1=R_0+R+2\lambda_2+2\epsilon_2+\diam(K_0)$.\\
\\
    (1).  We write $z=(x,y,s)\in K_0$ with $(x,y)\in P$ and $s\in \mathbb{R}$. Denote the $(\lambda_1,\epsilon_1)$-quasi-geodesic $\{(x,y,t)|t\in\mathbb{R}\}$ by $l_1$
    and denote the $\tau_p$ image of a geodesic $L_2$ in $\Cay(\Gamma,S)$ with endpoints $(x,y)$ by $l_2$, which is a $(\lambda_2,\epsilon_2)$-quasi-geodesic. 
    The Hausdorff distance between $l_1$ and $l_2$ is less than $R_0$, hence
    we can find $\beta\in\Gamma$ and $\beta\omega\in\Gamma$ on the geodesic $L_2$, such that $d(\beta p,z)\leqslant R_0+\lambda_2+\epsilon_2$,
    $d(\beta\omega p,\widetilde{\phi}^t(z))\leqslant R+\lambda_2+\epsilon_2$ and $\beta^{-1}\beta\omega=\omega\in\Gamma^+_P$.\\

    Let $\gamma =\beta\omega\beta^{-1}$, then
    $$d(p, \gamma^{-1}\widetilde{\phi}^t({z}))=d(\gamma p,\widetilde{\phi}^t({z}))\leqslant 
    d(\gamma p,\gamma \beta p)+ d(\gamma \beta p,\widetilde{\phi}^t({z})) = d(p,\beta p)+d(\beta\omega p,\widetilde{\phi}^t({z}))\leqslant $$ 
    $$ d(\beta p,z)+d(z,p)+d(\beta\omega p,\widetilde{\phi}^t({z})) \leqslant R_0+\lambda_2+\epsilon_2+\diam(K_0)+R+\lambda_2+\epsilon_2=R_1.$$
    Let $B=\{ \beta\in \Gamma\ |\ |\beta|\leqslant \lambda_2R_1+\epsilon_2\}\supset \{ \beta\in \Gamma\ |\ \beta p \in K \}$.
    Then (1) is proved.\\
\\
    (2). Since $\gamma'\in \Gamma_P^+$, let $L_2'$ be a geodesic in $G_P$ with $L_2'(n)=\gamma'$ for an positive integer $n$. Then $\tau_p(L_2')$ passes through $p$ and $\gamma p$.
    We denote $(L_2'(+\infty),L_2'(-\infty))=(x',y')$. Let  $l_1'$ be the $(\lambda_1,\epsilon_1)$-quasi-geodesic $\{(x',y',t)|t\in\mathbb{R}\}$.
    The Hausdorff distance between $l_1'$ and $\tau_p(L_2')$ is less than $R_0$.
    Then there exists $s'\in\mathbb{R}$ and $t'\in\mathbb{R}_+ $ such that the point $z'=(x',y',s')$ satisfies $d(z',p)\leqslant R_0$ and 
    $d(\widetilde{\phi}^{t'}(z'), \gamma' p)=d(p,\gamma'^{-1}\widetilde{\phi}^{t'}(z'))\leqslant R$. Hence $z'\in K$ and $\gamma'^{-1}\widetilde{\phi}^{t'}(z')\in K$.\\
\end{proof}

Now we start the proof of the theorem.

\begin{proof}[Proof of Theorem \ref{mainthm1}]
    Let $K,B$ be as given in Lemma \ref{equiesti}, and let $a,b$ be the constants given by applying Lemma \ref{disrelation} to $K$. Since $K$ is compact, the two norms are bi-Lipschitz on $K$, that is, there exists a constant $C_0>0$ such that $$C_0^{-1}\Vert \cdot\Vert\leqslant\Vert \cdot\Vert_0
    \leqslant C_0\Vert\cdot\Vert$$ on $K$.\\

    Firstly, we assume $\rho$ is $k$-dominated on $\Gamma_P^+$. Fix an arbitrary point $q\in S_P$ and $t\in\mathbb{R}_+ $, there is a lift $z$ of $q$ in $K_0$. By Lemma \ref{equiesti} (1), there exists $\gamma\in \{\beta\omega\beta^{-1}\ |\ \beta\in B, \omega\in\Gamma_P^+\}$, such that $\gamma^{-1}\widetilde{\phi}^t(z)\in K$.\\

    Let $v\in \mathbb{R}^d$ be an arbitrary nonzero vector.
    We may consider $v$ as a vector in any fiber of $\widetilde{E}$.
    Recall Notation \ref{twonorm}.
    Since $\Vert \cdot\Vert_0$ and $\Vert \cdot\Vert$ are bi-Lipschitz on $K$ and the flow $\widetilde{\psi^t}$ is trivial on the trivial bundle $\widetilde{E}$, we have 
    $$ C_0^{-2}\dfrac{\Vert \rho(\gamma^{-1})v\Vert_0}{\Vert v\Vert_0}
    =
    C_0^{-2}\dfrac{\Vert \rho(\gamma^{-1})\widetilde{\psi}^t_z(v)\Vert_0}{\Vert v\Vert_0}
    \leqslant
    \dfrac{\Vert \rho(\gamma^{-1})\widetilde{\psi}^t_z(v)\Vert_{\gamma^{-1}\widetilde{\phi}^t(z)}}{\Vert v\Vert_{z}}$$
    $$\leqslant C_0^2\dfrac{\Vert \rho(\gamma^{-1})\widetilde{\psi}^t_z(v)\Vert_0}{\Vert v\Vert_0}
    = C_0^2\dfrac{\Vert \rho(\gamma^{-1})v\Vert_0}{\Vert v\Vert_0},$$
    and since $\Vert \cdot\Vert$ is $\Gamma$-invariant, we have
    $$\dfrac{\Vert \widetilde{\psi}^t_z(v)\Vert_{\widetilde{\phi}^t(z)}}{\Vert v\Vert_{z}}
    =\dfrac{\Vert \rho(\gamma^{-1})\widetilde{\psi}^t_z(v)\Vert_{\gamma^{-1}\widetilde{\phi}^t(z)}}{\Vert v\Vert_{z}}.$$
    
    Then by the geometric definition of singular values, there exists $C_1>0$, such that $C_1^{-1}\sigma_j(\rho(\gamma^{-1}))
    \leqslant \sigma_j(\psi^t_z)\leqslant C_1\sigma_j(\rho(\gamma^{-1}))$ for any $1\leqslant j\leqslant d$.\\

    Since $\sigma_{d-k+1}(\rho(\gamma^{-1}))=\sigma_{k}(\rho(\gamma))^{-1}$ and 
    $\sigma_{d-k}(\rho(\gamma^{-1}))=\sigma_{k+1}(\rho(\gamma))^{-1}$, we have
    $$\dfrac{\sigma_{d-k+1}(\widetilde{\psi}^t_z)}{\sigma_{d-k}(\widetilde{\psi}^t_z)}\leqslant C_1^2
    \dfrac{\sigma_{d-k+1}(\rho(\gamma^{-1}))}{\sigma_{d-k}(\rho(\gamma^{-1}))} = C_1^2 \dfrac{\sigma_{k+1}(\rho(\gamma))}{\sigma_{k}(\rho(\gamma))}.$$

    By Lemma \ref{disrelation},  there exist constants $a,b>0$, such that $a^{-1}|t|-b\leqslant|\gamma|\leqslant a|t|+b$.
    By Lemma \ref{conjextending}, there exist constants $C_2$ and $c_2>0$ such that
    $$\dfrac{\sigma_{k+1}(\rho(\gamma))}{\sigma_k(\rho(\gamma))} \leqslant C_2e^{-c_2|\gamma|}.$$
    Then $$\dfrac{\sigma_{d-k+1}(\psi^t_q)}{\sigma_{d-k}(\psi^t_q)}=\dfrac{\sigma_{d-k+1}(\widetilde{\psi}^t_z)}{\sigma_{d-k}(\widetilde{\psi}^t_z)}
    \leqslant C_1^2C_2e^{-c_2|\gamma|} \leqslant C_1^2C_2e^{-c_2(a^{-1}t-b)},$$
    for any $q\in S_P$ and $t\in\mathbb{R}_+ $. Therefore, $\rho$ is $k$-Anosov over $S_P$.\\
    
    On the other hand, we assume $\rho$ is $k$-Anosov over $S_P$ and show that $\rho$ is $k$-dominated on $\Gamma_P^+$. Fix an arbitrary $\gamma'\in \Gamma_P^+$.
    By Lemma \ref{equiesti} (2), there exists $z'\in K\cap\widetilde{S_P}$ and $t'>0$, such that $\gamma'^{-1}\widetilde{\phi}^{t'}(z')\in K$. Let $q'\in S_P$
    be the projection of $z'$ through the quotient $\widetilde{S_P}\rightarrow \widetilde{S_P}/\Gamma=S_P$.
    By the same estimate as above
    we have $$\dfrac{\sigma_{k+1}(\rho(\gamma'))}{\sigma_{k}(\rho(\gamma'))}
    \leqslant C_1^2 \dfrac{\sigma_{d-k+1}(\widetilde{\psi}^{t'}_{z'})}{\sigma_{d-k}(\widetilde{\psi}^{t'}_{z'})}
    = C_1^2 \dfrac{\sigma_{d-k+1}({\psi}^{t'}_{q'})}{\sigma_{d-k}({\psi}^{t'}_{q'})}.$$
    Since $\rho$ is $k$-Anosov over $S_P$, 
    there exist constants $C_3,c_3>0$, such that $$\dfrac{\sigma_{d-k+1}(\psi^{t'}_{z'})}{\sigma_{d-k}(\psi^{t'}_{z'})}\leqslant C_3e^{-c_3 t'},$$
    for any $z' \in{S_P}$ and $t'\in \mathbb{R}_+$.
    Then we apply Lemma \ref{disrelation} again,
    we have $$\dfrac{\sigma_{k+1}(\rho(\gamma'))}{\sigma_{k}(\rho(\gamma'))}\leqslant C_1^2C_3e^{-c_3 a^{-1}(|\gamma'|-b)},$$
    for any $\gamma'\in \Gamma_P^+$. Therefore, $\rho$ is $k$-dominated on $\Gamma_P^+$.\\
\end{proof}\ \\
\section{Dominated splitting of Anosov representations over closed subflows}\label{secsplitting}
In this section, we show the equivalence of (1) and (2) in Theorem \ref{MT1}.\\

We still assume $\Gamma$ is a hyperbolic group with a finite, symmetric generating set $S$ and
$P$ is a closed, $\Gamma$-invariant subset of $\partial^{(2)}\Gamma$.
Fix a representation $\rho:\Gamma \rightarrow \mathrm{GL}(d,\mathbb{R})$. To simplify the notation, we will abbreviate $E=E_\rho$. 

\begin{definition}\label{defdomsplit}
    We say $E$ admits a \emph{$k$-dominated splitting over $S_P$}, if $E|_{S_P}=E^s\oplus E^u$, where $E^s$ and $E^u$ are $\psi$-invariant subbundles of $E|_{S_P}$ over $S_P$, with $\mathrm{rank}(E^s)=k$, such that there exist constants $C,\lambda>0$, $$\frac{\Vert \psi_q^t(v)\Vert }{\Vert \psi_q^t(w)\Vert }\leqslant Ce^{-\lambda t},$$ for any $q\in S_P, t\in \mathbb{R}_{+}$ and $v\in E^s_{q}$, $w\in E^u_{q}$, with $\Vert v\Vert =\Vert w\Vert =1$. We call $E^s$ the stable direction and $E^u$ the unstable direction of $E$.
\end{definition}

The definition is independent of the choice of the Riemannian metric on $E$ since $S_P$ is compact.

\begin{remark}\label{midsingular}
    It is easy to see that if $E|_{S_P}=E^s\oplus E^u$ is a $k$-dominated splitting of $E$ over $S_P$, then there exist constants $C,\lambda>0$, such that
    $$\frac{\Vert \psi_q^t|_{E^s_{q}}\Vert }{m(\psi_q^t|_{E^u_{q}})}\leqslant Ce^{-\lambda t},$$ for any $q\in S_P$ and $t\in \mathbb{R}_{+}$, where $\Vert \cdot\Vert $ denotes the
    largest singular value i.e., the norm, and $m(\cdot)$ denote the smallest singular value, i.e., the co-norm with respect to the Riemannian metric.
\end{remark}

\begin{theorem}\label{equivdands}
    $\rho$ is $k$-Anosov over $S_P$ if and only if $E$ admits a $k$-dominated splitting over $S_P$.
\end{theorem}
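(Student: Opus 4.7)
The plan is to prove the two implications separately. The direction (2)$\Rightarrow$(1) is short: from a $k$-dominated splitting $E|_{S_P} = E^s\oplus E^u$, Remark \ref{midsingular} gives the ratio $\|\psi^t_q|_{E^s_q}\|/m(\psi^t_q|_{E^u_q}) \leq Ce^{-\lambda t}$, and the min--max identities recalled in Section \ref{sectsv}, specialized with $V = E^s_q \in \mathrm{Gr}_k(E_{\rho,q})$ and $W = E^u_q \in \mathrm{Gr}_{d-k}(E_{\rho,q})$, give $\sigma_{d-k+1}(\psi^t_q) \leq \|\psi^t_q|_{E^s_q}\|$ and $\sigma_{d-k}(\psi^t_q) \geq m(\psi^t_q|_{E^u_q})$; dividing yields the $k$-Anosov inequality.

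For the main direction (1)$\Rightarrow$(2), I will construct $E^s$ and $E^u$ as limits of finite-time singular subspaces in the Grassmannian bundles over $S_P$. By compactness of $S_P$, the $k$-Anosov condition forces $\psi^t_q$ to have a uniform gap of index $d-k$ for all $t \geq T_0$ and all $q \in S_P$. For such $(q,t)$, set $F^s_t(q) := S_k(\psi^t_q) \in \mathrm{Gr}_k(E_{\rho,q})$ and $F^u_t(q) := S_k(\psi^t_q)^\perp \in \mathrm{Gr}_{d-k}(E_{\rho,q})$. Convergence of $F^s_t(q)$ as $t \to +\infty$ is obtained by telescoping along integer times: writing $(\psi^{t+1}_q)^{-1} = (\psi^t_q)^{-1} \circ (\psi^1_{\phi^t(q)})^{-1}$ and using $S_k(\psi^t_q) = U_k((\psi^t_q)^{-1})$, I apply Lemma \ref{estsingularvalue}(3) with $A = (\psi^t_q)^{-1}$ and $B = (\psi^1_{\phi^t(q)})^{-1}$, noting $\sigma_{k+1}(A)/\sigma_k(A) = \sigma_{d-k+1}(\psi^t_q)/\sigma_{d-k}(\psi^t_q)$, to obtain
\[
d\bigl(F^s_t(q),\ F^s_{t+1}(q)\bigr) \leq \|\psi^1_{\phi^t(q)}\|\cdot\|(\psi^1_{\phi^t(q)})^{-1}\|\cdot\frac{\sigma_{d-k+1}(\psi^t_q)}{\sigma_{d-k}(\psi^t_q)} \leq C'e^{-\lambda t},
\]
since the first two factors are bounded uniformly on $S_P$ by compactness and continuity of $\psi$. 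Summing the geometric tail produces a uniform-in-$q$ Cauchy estimate $d(F^s_{t_1}(q), F^s_{t_2}(q)) \leq C''e^{-\lambda t_1}$ for $t_2 \geq t_1 \geq T_0$, so $E^s_q := \lim_t F^s_t(q)$ is well defined; the same argument (or simply taking orthogonal complements) yields $E^u_q := \lim_t F^u_t(q)$, automatically orthogonal to $E^s_q$.

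The remaining verifications are routine but rely on the uniformity just established. Continuity of the sections $q \mapsto E^{s/u}_q$ follows from continuity of $q \mapsto F^{s/u}_t(q)$ at fixed $t$ together with uniform Cauchy convergence. Flow invariance $\psi^s_q(E^s_q) = E^s_{\phi^s(q)}$ is obtained by passing to the limit in the comparison between $F^s_{t+s}(q)$ and $F^s_t(\phi^s(q))$ using the same Lemma \ref{estsingularvalue}(3) mechanism. Transversality $E^s_q \oplus E^u_q = E_{\rho,q}$ is immediate from $F^s_t(q) \perp F^u_t(q)$. Finally, the exponential domination required by Definition \ref{defdomsplit} is derived by comparing $\|\psi^t_q(v)\|$ for unit $v \in E^s_q$ with $\|\psi^t_q|_{F^s_t(q)}\| = \sigma_{d-k+1}(\psi^t_q)$, and similarly $\|\psi^t_q(w)\|$ for unit $w \in E^u_q$ with $m(\psi^t_q|_{F^u_t(q)}) = \sigma_{d-k}(\psi^t_q)$, absorbing the exponentially decaying Cauchy error via Lemma \ref{estsingularvalue}(1)--(2).

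The main obstacle I foresee is keeping every estimate uniform in $q \in S_P$: the telescoping Cauchy bound, the continuity of the limit section, and especially the flow-invariance argument all require uniform control of the Lipschitz and singular-value constants over $S_P$. This uniformity is available precisely because $S_P$ is compact, which is the key structural feature of the subflow setting that makes the classical BPS-style construction go through verbatim after appropriate bookkeeping.
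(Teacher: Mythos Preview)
Your direction (2)$\Rightarrow$(1) is correct and matches the paper's one-line argument via Remark \ref{midsingular}.

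In the direction (1)$\Rightarrow$(2), your construction of $E^s_q = \lim_{t\to+\infty} S_k(\psi^t_q)$ via the telescoping Cauchy estimate is essentially the content of the paper's Lemma \ref{flowcauchy}, and this part is fine. However, your construction of $E^u$ has a genuine gap. You set $F^u_t(q) = S_k(\psi^t_q)^\perp$ and take $E^u_q = \lim_t F^u_t(q) = (E^s_q)^\perp$. This limit exists and is automatically transverse to $E^s_q$, but it is \emph{not} flow-invariant in general: flow invariance of $(E^s)^\perp$ would force $\psi^t$ to be block-orthogonal with respect to the metric, which is a very special condition and certainly not implied by the $k$-Anosov hypothesis. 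A simple $2\times 2$ upper-triangular example with a nonzero off-diagonal entry already shows that the unstable direction need not be orthogonal to the stable one. Consequently your flow-invariance verification for $E^u$ cannot go through as stated, and the final domination estimate, which relies on $E^u$ being invariant, also collapses.

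The correct unstable bundle is obtained from the \emph{backward} flow: $E^u_q = \lim_{t\to+\infty} U_{d-k}(\psi^t_{\phi^{-t}(q)})$, and with this definition transversality of $E^s$ and $E^u$ is a nontrivial fact rather than an orthogonality tautology. The paper sidesteps all of this by invoking the Bochi--Gourmelon theorem (Theorem \ref{BGtheorem}) for the time-$1$ map $\psi^1$ over the compact base $S_P$, which directly furnishes the splitting together with the correct limit formulas for both $E^s$ and $E^u$; Lemma \ref{flowcauchy} is then used only to check that the continuous-time limits agree with the discrete ones, upgrading $\psi^1$-invariance to full $\psi^t$-invariance. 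If you want to avoid the black box, you must redo your Cauchy argument for $U_{d-k}(\psi^t_{\phi^{-t}(q)})$ and then prove transversality separately.
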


One proof of the theorem can be given by a classical result of Bochi--Gourmelon \cite{BG}, which based on the properties of the flow $\psi$. Later in Section \ref{sdpp}, we will introduce another algebraic approach based on the $k$-dominated property of $\rho$. It is easy to see that once $E$ admits a $k$-dominated splitting over $S_P$, it is unique.\\

We recall Bochi and Gourmelon's theorem.

\begin{theorem}[\cite{BG} Theorem A]\label{BGtheorem}
    Let $V$ be a real vector bundle of rank $k$ over a compact Hausdorff space $X$. Let $A:V\rightarrow V$ be a automorphism of vector bundle $V$ fibring over a homeomorphism $T:X\rightarrow X$. Then the following two conditions are equivalent.\\
    \begin{itemize}
        \item [(1).] $V$ admits a $k$-dominated splitting, that is, there exist two $A$-invariant subbundles $W$ and $U$ of $V$, constants $C,\lambda>0$,  such that $V=W\oplus U$, $\mathrm{rank}(W)=k$ and $$\frac{\Vert A^n(x)|_{W_x}\Vert}{m(A^n(x)|_{U_x})}\leqslant Ce^{-\lambda n },$$ for any $x\in X$ and $n\in\mathbb{N}$. Moreover, $$W_x= \lim\limits_{n\rightarrow +\infty} S_k(A^n(x)),$$ and $$U_x= \lim\limits_{n\rightarrow +\infty} U_{d-k}(A^n(T^{-n}(x))),$$ for any $x\in X$.\\
        \item [(2).] There exist constants $C,\lambda>0$, such that $$\frac{\sigma_{d-k+1}(A^n(x))}{\sigma_{d-k}(A^n(x))}\leqslant Ce^{-\lambda n },$$ for any $x\in X$.
    \end{itemize}
\end{theorem}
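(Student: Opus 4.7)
The plan is to prove the two implications separately. The direction (1) $\Rightarrow$ (2) is immediate from the max-min characterization of singular values; the content lies in (2) $\Rightarrow$ (1), where the invariant subbundles $W$ and $U$ must be constructed explicitly as limits of singular value subspaces and then shown to be transverse.

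For (1) $\Rightarrow$ (2), use the max-min characterizations from Section \ref{sectsv}. Since $\dim W_x = k$, choosing $Q = W_x$ in $\sigma_{d-k+1}(A^n(x)) = \min_{Q \in \Gr_k(V_x)} \Vert A^n|_Q\Vert$ gives $\sigma_{d-k+1}(A^n(x)) \leq \Vert A^n|_{W_x}\Vert$. Symmetrically, since $\dim U_x = d-k$, $\sigma_{d-k}(A^n(x)) = \max_{Q \in \Gr_{d-k}(V_x)} m(A^n|_Q) \geq m(A^n|_{U_x})$. Dividing yields $\sigma_{d-k+1}(A^n x)/\sigma_{d-k}(A^n x) \leq \Vert A^n|_W\Vert / m(A^n|_U) \leq C e^{-\lambda n}$.

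For (2) $\Rightarrow$ (1), define $W_x := \lim_{n \to \infty} S_k(A^n(x))$ and $U_x := \lim_{n \to \infty} U_{d-k}(A^n(T^{-n}(x)))$. To show that $W_x$ is well-defined, rewrite $S_k(A^n(x)) = U_k(A^{-n}(T^n(x)))$ and apply Lemma \ref{estsingularvalue}(3) to the factorization $A^{-(n+1)}(T^{n+1}x) = A^{-n}(T^n x) \circ A^{-1}(T^{n+1}x)$:
\[
d\bigl(S_k(A^n x),\, S_k(A^{n+1} x)\bigr) \leq \Vert A(T^{n+1}x)\Vert \, \Vert A^{-1}(T^{n+1}x)\Vert \cdot \frac{\sigma_{d-k+1}(A^n x)}{\sigma_{d-k}(A^n x)},
\]
using $\sigma_{k+1}(B^{-1})/\sigma_k(B^{-1}) = \sigma_{d-k+1}(B)/\sigma_{d-k}(B)$. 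The prefactor is bounded uniformly in $x$ and $n$ by compactness of $X$, and the final factor decays like $C e^{-\lambda n}$ by (2). Hence the sequence is Cauchy at exponential rate uniformly in $x$, defining $W$ as a continuous subbundle. An analogous argument applied to $M_n := A^n(T^{-n}x)$ with index $d-k$ in Lemma \ref{estsingularvalue}(3) produces $U$. Invariance $A(x) W_x = W_{T(x)}$ follows from Lemma \ref{estsingularvalue}(4): in the limit, $A(x) \cdot U_k(A^{-n}(T^n x))$ and $U_k(A(x) \circ A^{-n}(T^n x)) = U_k(A^{-(n-1)}(T^n x))$ agree, with the latter converging to $W_{T(x)}$; $U$ is handled symmetrically.

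The hard part will be establishing transversality $V_x = W_x \oplus U_x$, since $W$ and $U$ were built from independent forward and backward data. The plan is an invariant cone field argument. Fix a continuous supplement $\widetilde U$ to $W$ on the compact base $X$, and for $\alpha > 0$ small let $C^u_x$ be the cone around $\widetilde U_x$ of half-angle $\alpha$. Using the identities $A \cdot S_k(A) = U_{d-k}(A)^\perp$ and $A^{-1} \cdot U_{d-k}(A) = S_k(A)^\perp$ from Section \ref{sectsv}, together with Lemma \ref{estsingularvalue}(1)--(2) and the exponential gap from (2), I would verify that $A(x) C^u_x \subset C^u_{T(x)}$ with uniform interior contraction; the unique forward-invariant $(d-k)$-plane trapped in $C^u$ must coincide with $U$ by its backward-limit construction, so $U_x \subset C^u_x$. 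For $\alpha$ small, $C^u_x$ avoids a neighborhood of $W_x$, hence $W_x \cap U_x = \{0\}$, and the dimension count $k + (d-k) = d$ gives $V_x = W_x \oplus U_x$. Once transversality is in hand, the domination estimate $\Vert A^n|_W\Vert/m(A^n|_U) \leq C e^{-\lambda n}$ follows from the uniform approximations $W \approx S_k(A^n)$ and $U \approx U_{d-k}(A^n \circ T^{-n})$, the uniform angle bound between $W$ and $U$, and Lemma \ref{estsingularvalue}(2).
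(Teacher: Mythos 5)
This theorem is quoted from Bochi--Gourmelon (\cite{BG}, Theorem A) and the paper gives no proof of it; what you are comparing against is a black-box citation, not an argument in the text.

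Your reconstruction of $(1)\Rightarrow(2)$ via the max--min characterization of singular values is correct. The construction of $W_x=\lim_n S_k(A^n(x))$ and $U_x=\lim_n U_{d-k}(A^n(T^{-n}x))$ as uniform Cauchy limits, using the identity $S_k(B)=U_k(B^{-1})$ together with Lemma~\ref{estsingularvalue}(3) and compactness of $X$, is also sound, and so is the verification of $A$-invariance via Lemma~\ref{estsingularvalue}(4). These steps track the Bochi--Gourmelon strategy.

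The gap is in the transversality step, where the cone-field sketch as written is circular. You build the cone $C^u_x$ of half-angle $\alpha$ around an \emph{arbitrary} continuous complement $\widetilde U$ of $W$, and then assert $A(x)C^u_x\subset C^u_{T(x)}$. For a generic $\widetilde U$ this is false: there is no reason for $A(x)\widetilde U_x$ to land near $\widetilde U_{T(x)}$, and the ratio hypothesis in $(2)$ controls only the $n$-step singular value gap, not a one-step cone contraction relative to an arbitrary direction. Equivalently, $C^u_x$ is the complement of a narrow cone around $W_x$, so cone invariance is equivalent to $A^{-1}(Tx)N_\alpha(W_{Tx})\subset N_\alpha(W_x)$, i.e.\ to the backward domination of $W$ relative to its complement---which is the conclusion, not an available hypothesis. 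Your subsequent claim that ``the unique forward-invariant $(d-k)$-plane trapped in $C^u$ must coincide with $U$'' requires first knowing $U_x\subset C^u_x$, that is, a uniform lower bound on $\angle(U_x,W_x)$, which is exactly what transversality asserts. To make this rigorous you would either have to choose $\widetilde U$ at scale $N$ as $U_{d-k}(A^N(T^{-N}x))$ and run careful $n$-step angle estimates from Lemma~\ref{estsingularvalue}(1)--(2) and the identities $A\cdot S_k(A)=U_{d-k}(A)^\perp$, $A^{-1}\cdot U_{d-k}(A)=S_k(A)^\perp$ (essentially reproducing Bochi--Gourmelon's direct argument), or argue transversality head-on by showing that a nonzero $v\in W_x\cap U_x$ would be simultaneously contracted by $A^n(x)$ at rate $\sigma_{d-k+1}$ and contracted by $A^{-m}(x)$ at rate $\sigma_{d-k}^{-1}$, contradicting the gap when composed along the orbit. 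Either way, the present sketch omits the key estimate.
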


It remains to pass from discrete to continuous. Recall the notations $S_k(\cdot)$ and $U_k(\cdot)$ defined in section \ref{sectsv}.

\begin{lemma}\label{flowcauchy}
    If $\rho$ is $k$-Anosov over $S_P$, then the limit of $S_k(\psi^t_q)$ as $t\rightarrow +\infty$ exists and the convergence is uniform for all $q\in S_P$.
\end{lemma}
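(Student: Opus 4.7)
The plan is to reduce to the discrete-time case via Bochi--Gourmelon's Theorem \ref{BGtheorem}, and then interpolate to real times using Lemma \ref{estsingularvalue}(3).

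First, I would apply Theorem \ref{BGtheorem} to the time-one bundle automorphism $T = \psi^1 : E|_{S_P} \to E|_{S_P}$, fibering over the homeomorphism $\phi^1 : S_P \to S_P$. Restricted to integer times $n$, the $k$-Anosov hypothesis is precisely condition (2) of that theorem:
\[
\frac{\sigma_{d-k+1}(T^n_q)}{\sigma_{d-k}(T^n_q)} = \frac{\sigma_{d-k+1}(\psi^n_q)}{\sigma_{d-k}(\psi^n_q)} \leqslant Ce^{-\lambda n}.
\]
Theorem \ref{BGtheorem} therefore yields a $k$-dominated splitting $E|_{S_P} = W \oplus U$ for $T$ with $W_q = \lim_{n \to \infty} S_k(\psi^n_q)$. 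The proof of that theorem produces an exponential rate of convergence depending only on $C$ and $\lambda$, so the convergence is uniform in $q \in S_P$.

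To upgrade to real times, write $t = n + r$ with $n = \lfloor t \rfloor$ and $r \in [0, 1)$, and use the cocycle identity $\psi^t_q = \psi^r_{\phi^n(q)} \circ \psi^n_q$. Setting $A = (\psi^n_q)^{-1}$ and $B = (\psi^r_{\phi^n(q)})^{-1}$, we have $AB = (\psi^t_q)^{-1}$; the identities $S_k(M) = U_k(M^{-1})$ and $\sigma_k(M^{-1}) = \sigma_{d-k+1}(M)^{-1}$ translate the Anosov gap at index $d-k$ for $\psi^n_q$ into a gap at index $k$ for $A$. Applying Lemma \ref{estsingularvalue}(3) then yields
\[
d\bigl(S_k(\psi^n_q), S_k(\psi^t_q)\bigr) = d\bigl(U_k(A), U_k(AB)\bigr) \leqslant \|\psi^r_{\phi^n(q)}\| \cdot \|(\psi^r_{\phi^n(q)})^{-1}\| \cdot \frac{\sigma_{d-k+1}(\psi^n_q)}{\sigma_{d-k}(\psi^n_q)}.
\]
By compactness of $S_P$ and continuity of the flow $\psi$, the first two factors are uniformly bounded for $q \in S_P$ and $r \in [0,1]$, while the third is bounded by $Ce^{-\lambda n}$ by the Anosov assumption. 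Combined with the uniform integer-time convergence, this yields that $S_k(\psi^t_q) \to W_q$ uniformly in $q$ as $t \to \infty$.

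The main obstacle is keeping the duality bookkeeping straight: the Anosov gap at index $d-k$ for $\psi^t$ must be translated through inversion into a gap at index $k$ for $(\psi^t)^{-1}$ so that Lemma \ref{estsingularvalue}(3) can be applied with the correct index. A minor subsidiary point is verifying that Bochi--Gourmelon's integer-time convergence is uniform in $q$; this follows from the fact that the exponential rate produced in its proof depends only on the constants $C$ and $\lambda$ appearing in the Anosov hypothesis.
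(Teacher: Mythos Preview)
Your proof is correct, but it takes a different and somewhat heavier route than the paper. The paper does not invoke Bochi--Gourmelon here at all; instead it proves directly that $S_k(\psi^t_q)$ is uniformly Cauchy. For $t_1<t_2$ it picks a unit vector $v\in S_k(\psi^{t_1}_q)$ realizing the distance to $S_k(\psi^{t_2}_q)$, decomposes $v=v'+v''$ orthogonally with respect to $S_k(\psi^{t_2}_q)$, and compares upper and lower bounds on $\Vert\psi^{t_2}_q(v)\Vert$ to obtain
\[
d\bigl(S_k(\psi^{t_1}_q),S_k(\psi^{t_2}_q)\bigr)\ \leqslant\ \frac{\sigma_{d-k+1}(\psi^{t_1}_q)}{\sigma_{d-k}(\psi^{t_1}_q)}\cdot\frac{\Vert\psi^{t_2-t_1}_{\phi^{t_1}(q)}\Vert}{m(\psi^{t_2-t_1}_{\phi^{t_1}(q)})}.
\]
This is bounded by $CC'^2 e^{-\lambda t_1}$ when $t_2-t_1\leqslant 1$ (the second factor is controlled by compactness of $S_P\times[0,1]$), and the general case follows by telescoping over unit intervals into a geometric series.

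Your interpolation step via Lemma~\ref{estsingularvalue}(3) is essentially a packaged form of this same orthogonal-decomposition estimate. In fact, if you apply your step with arbitrary real $t_1<t_2\leqslant t_1+1$ in place of $n<t$ and then telescope, you recover the paper's argument and the appeal to Theorem~\ref{BGtheorem} (and the side claim about uniformity in its proof) becomes unnecessary. The paper keeps Lemma~\ref{flowcauchy} independent of Bochi--Gourmelon precisely so that, in the proof of Theorem~\ref{equivdands}, it can be used alongside Theorem~\ref{BGtheorem} to upgrade the discrete-time splitting produced there to a $\psi^t$-invariant one for all real $t$.
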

\begin{proof}
    Let $q\in S_P$ be an arbitrary point and $t_2>t_1>0$. Let $v\in S_k(\psi^{t_1}_q)$ be a unit vector such that $sin\angle (v, S_k(\psi^{t_2}_q) )=d(S_k(\psi^{t_1}_q), S_k(\psi^{t_2}_q))$  Decompose $v=v'+v''$ with $v'\in S_k(\psi^{t_2}_q)^{\perp}$ and $v''\in S_k(\psi^{t_2}_q)$ and observe that $\Vert v'\Vert =d(S_k(\psi^{t_1}_q), S_k(\psi^{t_2}_q)).$\\

    Since $(\psi^{t_2}_q)^*\psi^{t_2}_q$ preserves $S_k(\psi^{t_2}_q)$, we have $0=\langle v',(\psi^{t_2}_q)^*\psi^{t_2}_q (v'') \rangle=\langle\psi^{t_2}_q(v'),\psi^{t_2}_q (v'')\rangle,$ Then $$\Vert \psi^{t_2}_q(v)\Vert =\Vert \psi^{t_2}_q(v')+\psi^{t_2}_q(v'')\Vert \geqslant \Vert \psi^{t_2}_q(v')\Vert \geqslant \sigma_{d-k}(\psi^{t_2}_q)\Vert v'\Vert.$$
    
    On the other hand, $\Vert \psi^{t_2}_q(v)\Vert =\Vert \psi^{t_2-t_1}_{\phi^{t_1}(q)}\psi^{t_1}_q(v)\Vert \leqslant \Vert \psi^{t_2-t_1}_{\phi^{t_1}(q)}\Vert \cdot\Vert \psi^{t_1}_q(v)\Vert  \leqslant \Vert \psi^{t_2-t_1}_{\phi^{t_1}(q)}\Vert \cdot \sigma_{d-k+1}(\psi^{t_1}_q),$ as $v\in S_k(\psi^{t_1}_q)$. Then together with Observation \ref{SingluarvalueEstimate}, $$d(S_k(\psi^{t_1}_q), S_k(\psi^{t_2}_q)) =\Vert v'\Vert \ \leqslant\ \frac{\sigma_{d-k+1}(\psi^{t_1}_q)}{\sigma_{d-k}(\psi^{t_2}_q)}\cdot \Vert \psi^{t_2-t_1}_{\phi^{t_1}(q)}\Vert
    \ \leqslant\ \frac{\sigma_{d-k+1}(\psi^{t_1}_q)}{\sigma_{d-k}(\psi^{t_1}_q)}\cdot \frac{\Vert \psi^{t_2-t_1}_{\phi^{t_1}(q)}\Vert}{m( \psi^{t_2-t_1}_{\phi^{t_1}(q)})}.$$
    
    We firstly consider the case when $t_2-t_1\leqslant 1$. Since $[0,1]$ and $S_P$ are compact, there exists $C'>0$, such that $\Vert \psi^{t}_{q}\Vert\leqslant C'$ and  $\Vert \psi^{-t}_{q}\Vert \leqslant C'$, for any $t\in [0,1]$ and $q\in S_P$.\\
    
    Since $\rho$ is $k$-Anosov over $S_P$, there exist constants $C,\lambda>0$, such that $\dfrac{\sigma_{d-k+1}(\psi^t_q)}{\sigma_{d-k}(\psi^t_q)}\leqslant Ce^{-\lambda t}$ for any $q\in S_P$ and $t\in\mathbb{R}_+$. Therefore, $$d(S_k(\psi^{t_1}_q), S_k(\psi^{t_2}_q))\ \leqslant \ C C'^2\cdot e^{-\lambda t_1}.$$
    
    For arbitrary $t_2>t_1>0$, we separate the interval $[t_1,t_2]$ into segments of length $1$ and apply the above discussion. We have that $$d(S_k(\psi^{t_1}_q), S_k(\psi^{t_2}_q))\ \leqslant \ C C'^2\cdot (e^{-\lambda t_1} + e^{-\lambda (t_1+1)} + e^{-\lambda (t_1+2)} + ...)\leqslant 2CC'^2\dfrac{1}{1-e^{-\lambda}}\cdot e^{-\lambda t_1}.$$
    
    Hence $S_k(\psi^{t}_q)$ is uniformly Cauchy for any $q\in S_P$ as $t\rightarrow +\infty$.\\
\end{proof}

\begin{proof}[Proof of Theorem \ref{equivdands}]
    It is not hard to see that if $E$ admits a $k$-dominated splitting over $S_P$, then $\rho$ is $k$-Anosov over $S_P$ by Remark \ref{midsingular}.\\

    Now we assume $\rho$ is $k$-Anosov over $S_P$. We apply Theorem \ref{BGtheorem} for $V=E|_{S_P}$, $X= S_P$, $A=\psi^1$ and $T=\phi^1$. Then we have $E|_{S_P}= E^s\oplus E^u$ with $E^s$, $E^u$ two $\psi^1$-invariant subbundles of $E|_{S_P}$ and there exist constants $C_0,\lambda>0$, such that  $$\frac{\Vert \psi^n_q|_{E^s_q}\Vert}{m(\psi^n_q|_{E^u_q})}\leqslant C_0e^{-\lambda n },$$ for any $q\in S_P$ and $n\in\mathbb{N}$.  Then by Lemma \ref{flowcauchy}, $\lim\limits_{t\rightarrow +\infty}S_k(\psi^t_q) = \lim\limits_{n\rightarrow +\infty} S_k(\psi^n_q) = E^s$ and similarly,  $\lim\limits_{t\rightarrow +\infty} U_{d-k}(\psi^t_{\phi^{-t}(q)}) = \lim\limits_{n\rightarrow +\infty} U_{d-k}(\psi^n_{\phi^{-n}(q)}) = E^u$. Therefore, $E^s$, $E^u$ are $\psi^t$-invariant subbundles.\\

    Since $[0,1]$ and $S_P$ are compact, there exists a constant $C_1>0$, such that $$C_1^{-1} \leqslant \inf_{t\in [0,1],q\in S_P} \frac{\Vert \psi_q^t|_{E^s_{q}}\Vert }{m(\psi_q^t|_{E^u_{q}})} \leqslant  \sup_{t\in [0,1], q\in S_P} \frac{\Vert \psi_q^t|_{E^s_{q}}\Vert }{m(\psi_q^t|_{E^u_{q}})} \leqslant C_1.$$ Then $$\frac{\Vert \psi^t_q|_{E^s_q}\Vert}{m(\psi^t_q|_{E^u_q})}\leqslant C_1C_0e^{-\lambda (t-1) },$$ for any $q\in S_P$ and $t\in\mathbb{R}_+$.\\
\end{proof}\ \\

\section{Strongly dynamics preserving property}\label{sdpp}
In this section, we define the limit maps of an Anosov representation over a closed subflow and investigate the properties of them.
In particular, we introduce the strongly dynamics preserving property and show that (3)$\Rightarrow$(4)$\Rightarrow$(2) in Theorem \ref{MT1}.\\

Let $\Gamma$ be a hyperbolic group and $S$ a fixed finite, symmetric generating set. Let $\rho$ be a representation of $\Gamma$ into $\mathrm{GL}(d,\mathbb{R})$.
Let $P$ be a closed, $\Gamma$-invariant subset of $\partial^{(2)}\Gamma$. Recall that we defined the flow spaces
$\widetilde{S_P}=P\times \mathbb{R}\subset \widetilde{U\Gamma} = \partial^{(2)}\Gamma\times \mathbb{R}$
and $S_P=\widetilde{S_P}/\Gamma$ in Section \ref{equivalence1}.
We let $\Pi_1(P) = \{x\in\partial \Gamma\ |\ \exists y\in \partial \Gamma\ \text{with} \ (x,y)\in P \}$ denote the projection of $P$ to the first coordinate, and similarly, let
$\Pi_2(P)=\{y\in\partial \Gamma\ |\ \exists x\in \partial \Gamma\ \text{with} \ (x,y)\in P \}$ denote the projection of $P$ to the second coordinate.\\

\subsection{Extending the sets $\Gamma_P^+$ and $\Gamma_P^-$} \ \\

The subsets $\Gamma_P^+$ and $\Gamma_P^-$ of $\Gamma$ are defined in Section \ref{kdom}. They consist of elements along geodesics in $\Cay(\Gamma,S)$ which have endpoints in $P$ and pass through $\id$. It will be convenient for us to discuss about the properties of limit maps if we extend the subsets $\Gamma_P^+$ and $\Gamma_P^-$ by elements along quasi-geodesics in $\Cay(\Gamma,S)$.\\

Firstly, notice that $\Gamma_P^+$ and $\Gamma_P^-$ have the following properties.

\begin{lemma}\label{inverseflowinverseelt}
    We denote $(\Gamma_P^+)^{-1}=\{\gamma^{-1} \ | \gamma\in\Gamma_P^+\  \}$. Then $(\Gamma_P^+)^{-1}= \Gamma_P^-$.
\end{lemma}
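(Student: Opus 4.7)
The plan is to prove both inclusions directly by constructing, from a geodesic witnessing membership in $\Gamma_P^+$, a translated and $\Gamma$-shifted geodesic that witnesses the inverse element's membership in $\Gamma_P^-$, and vice versa. The only non-trivial ingredient is the $\Gamma$-invariance of $P$, which is built into the hypothesis.

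First I would prove $(\Gamma_P^+)^{-1} \subseteq \Gamma_P^-$. Take $\gamma \in \Gamma_P^+$, so $\gamma = l(n)$ for some $l \in G_P$ and $n \in \mathbb{Z}_+$. Define $l'(t) = \gamma^{-1} l(t+n)$. Since $\gamma^{-1}$ acts as an isometry on $\Cay(\Gamma,S)$ and time translation preserves the geodesic property, $l'$ is a geodesic. One checks $l'(0) = \gamma^{-1} l(n) = \id$ and $l'(-n) = \gamma^{-1} l(0) = \gamma^{-1}$. The endpoints are $(l'(+\infty), l'(-\infty)) = (\gamma^{-1} l(+\infty), \gamma^{-1} l(-\infty)) = \gamma^{-1} \cdot (l(+\infty), l(-\infty))$, which lies in $P$ because $(l(+\infty), l(-\infty)) \in P$ and $P$ is $\Gamma$-invariant. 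Hence $l' \in G_P$, so $\gamma^{-1} \in \Gamma_P^-$.

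The reverse inclusion $\Gamma_P^- \subseteq (\Gamma_P^+)^{-1}$ is symmetric. Given $\gamma \in \Gamma_P^-$, so $\gamma = l(-n)$ with $l \in G_P$, $n \in \mathbb{Z}_+$, set $l'(t) = \gamma^{-1} l(t-n)$. Then $l'$ is a geodesic, $l'(0) = \id$, $l'(n) = \gamma^{-1}$, and the endpoints are once again the $\gamma^{-1}$-translate of those of $l$, hence still in $P$. Therefore $l' \in G_P$ and $\gamma^{-1} \in \Gamma_P^+$, i.e.\ $\gamma \in (\Gamma_P^+)^{-1}$.

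There is no real obstacle here; the lemma is essentially a bookkeeping statement. The only point one must be careful about is to resist using the naive reflection $t \mapsto -t$ of a geodesic, which would send $P$ to its flip $\{(y,x) \mid (x,y) \in P\}$ and require $P$ to be symmetric — an assumption not present. Using the $\Gamma$-action to move $l(n)$ (resp.\ $l(-n)$) back to the identity, combined with a time translation, keeps the endpoints inside $P$ and yields both inclusions cleanly.
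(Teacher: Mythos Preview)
Your proof is correct and is essentially identical to the paper's: both define $l'(t)=\gamma^{-1}l(t+n)$, check $l'(0)=\id$, $l'(-n)=\gamma^{-1}$, and use $\Gamma$-invariance of $P$ for the endpoints, with the reverse inclusion handled symmetrically. Your additional remark about avoiding the reflection $t\mapsto -t$ is a nice clarification but not needed for the argument.
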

\begin{proof}
    For any $\gamma\in \Gamma_P^+$, there exists a geodesic $l:\mathbb{R}\rightarrow \Cay(\Gamma, S)$ with $l(0)=\id $, $(l(+\infty),l(-\infty))\in P$ and $l(n)=\gamma$ for a positive integer $n$. Then the geodesic $l'(\ \cdot \ ) = \gamma^{-1}l(\ \cdot\ + n)$ satisfies $l'(0)=\id $, $(l'(+\infty),l'(-\infty))\in P$ and $l'(-n)=\gamma^{-1}$. Hence $\gamma^{-1} \in\Gamma_P^-$. The other direction is similar.\\
\end{proof}

Recall that $\hat{P}=\{(y,x) \ |\ (x,y)\in P\}$.
Then $\Gamma_{\hat{P}}^+= \Gamma_{P}^-=(\Gamma_P^+)^{-1}$ by the lemma above.\\

Let $\lambda\geqslant 1 $ and $\epsilon,b\geqslant 0$ be three constants. We define $$Q_{P,(\lambda,\epsilon,b)}=\{ l:\mathbb{R}\rightarrow \Cay(\Gamma,S)\ |\ l\ \text{is a}\ (\lambda,\epsilon)\text{-quasi-geodesic with}\ $$ $$d(l(0),\id)\leqslant b\ \text{and}\ (l(+\infty),l(-\infty))\in P \}$$ and $$\Gamma^+_{P,(\lambda,\epsilon,b)}=\{ l(t)\ |\ l\in Q_{P,(\lambda,\epsilon,b)}, t\in \mathbb{R}_+\ \text{with}\ l(t)\in \Gamma \}.$$ Here $l(t)\in \Gamma$ means that $l(t)$ is a vertex in the Cayley graph. Then we have $\Gamma_P^+=\Gamma^+_{P,(1,0,0)}$, and $\Gamma^+_{P,(\lambda,\epsilon,b)} \subset \Gamma^+_{P,(\lambda',\epsilon',b')}$ if $\lambda\leqslant \lambda'$, $\epsilon\leqslant \epsilon'$ and $b\leqslant b'$. We also denote $Q^{+\infty}_{P,(\lambda,\epsilon,b)}= \{ l(+\infty)\in \partial\Gamma \ |\ l\in Q_{P,(\lambda,\epsilon,b)} \}.$\\

Follows from Proposition \ref{FTPs} and Observation \ref{SingluarvalueEstimate}, we have the following observation.

\begin{observation}\label{observationext}
    For any $\lambda\geqslant 1$ and $\epsilon, b\geqslant 0$, $\rho$ is $k$-dominated on $\Gamma_P^+$ if and only $\rho$ is $k$-dominated on $\Gamma^+_{P,(\lambda,\epsilon,b)}$.
\end{observation}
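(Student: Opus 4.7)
The plan is as follows. One direction is immediate: since $\Gamma_P^+ = \Gamma^+_{P,(1,0,0)} \subseteq \Gamma^+_{P,(\lambda,\epsilon,b)}$ whenever $\lambda \geqslant 1$ and $\epsilon, b \geqslant 0$, the $k$-domination bound on the larger set restricts to the smaller one. All the content is in the converse: assuming $\rho$ is $k$-dominated on $\Gamma_P^+$, I want to deduce the same bound (with weaker constants) on $\Gamma^+_{P,(\lambda,\epsilon,b)}$.

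Fix $\gamma \in \Gamma^+_{P,(\lambda,\epsilon,b)}$ and let $l$ be a $(\lambda,\epsilon)$-quasi-geodesic realizing it, with $d(l(0),\id)\leqslant b$, endpoints $(x,y) \in P$, and $\gamma = l(t)$ for some $t > 0$. Let $l'$ be a geodesic in $\Cay(\Gamma,S)$ with endpoints $(x,y)$. By Propositions \ref{FTP} and \ref{FTPs} there are constants $R_0, R$ depending only on $\delta,\lambda,\epsilon$ and parameter values $s_0 < s_1$ such that $d(l'(s_0), l(0)) \leqslant R_0$ and $d(l'(s_1), \gamma) \leqslant R$. After rounding to nearby vertices, I obtain $\beta, \beta' \in \Gamma$ lying on $l'$ with $\beta$ preceding $\beta'$, such that $d(\beta, \id)$ and $d(\beta', \gamma)$ are bounded by constants depending only on $\delta, \lambda, \epsilon, b$. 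The translated geodesic $l''(\,\cdot\,) = \beta^{-1} l'(\,\cdot + s_\beta)$, where $s_\beta$ is the parameter of $\beta$, satisfies $l''(0) = \id$ and has endpoints in $P$ by $\Gamma$-invariance of $P$; it passes through $\beta^{-1}\beta'$ at a positive integer time, so $\beta^{-1}\beta' \in \Gamma_P^+$.

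Next, decompose $\gamma = \beta \cdot (\beta^{-1}\beta') \cdot ((\beta')^{-1}\gamma)$. The elements $\beta$ and $(\beta')^{-1}\gamma$ have word length bounded by a constant depending only on $\delta, \lambda, \epsilon, b$, so $\rho$ of them and their inverses all have norm bounded by some constant $M$. Observation \ref{SingluarvalueEstimate} applied twice (once for left and once for right multiplication) gives
$$\frac{\sigma_{k+1}(\rho(\gamma))}{\sigma_k(\rho(\gamma))} \leqslant M^{4} \cdot \frac{\sigma_{k+1}(\rho(\beta^{-1}\beta'))}{\sigma_k(\rho(\beta^{-1}\beta'))}.$$
Since $\beta^{-1}\beta' \in \Gamma_P^+$, the hypothesis provides $C, \lambda' > 0$ with the right-hand ratio bounded by $M^4 C e^{-\lambda' |\beta^{-1}\beta'|}$. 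A triangle inequality gives $|\beta^{-1}\beta'| \geqslant |\gamma| - d(\beta, \id) - d(\beta', \gamma)$, which is $|\gamma|$ up to an additive constant; absorbing this constant into $C$ yields $k$-domination on $\Gamma^+_{P,(\lambda,\epsilon,b)}$.

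The only subtlety that requires care is ensuring the intermediate element $\beta^{-1}\beta'$ actually lands in $\Gamma_P^+$ rather than in $\Gamma_P^-$. This is precisely where the orientation-preserving form of the fellow traveler property (Proposition \ref{FTPs}) is needed: it guarantees $s_1 > s_0$, which is what makes $\beta^{-1}\beta'$ occur at a positive time on the translated geodesic $l''$. Everything else is a routine combination of the singular value estimates with the boundedness of $\beta$ and $(\beta')^{-1}\gamma$.
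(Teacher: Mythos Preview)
Your argument is correct and is precisely the approach the paper intends: the paper merely states that the observation follows from Proposition~\ref{FTPs} and Observation~\ref{SingluarvalueEstimate}, and your proof spells out exactly this combination---use the orientation-preserving fellow traveler property to write $\gamma$ as a bounded element times an element of $\Gamma_P^+$ times a bounded element, then apply the singular value estimate twice.
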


For a subset $T\subset \Gamma$, we denote the limit set of $T$ by $\Lambda(T)$, that is, 
$\Lambda(T)= \overline{T}\cap \partial\Gamma$, where $\overline{T}$ is the closure of $T$ in $\Gamma\cup \partial\Gamma$.

\begin{lemma}\label{preunionconverge} For any $\lambda\geqslant 1$ and $\epsilon,b\geqslant 0$,
    \begin{itemize}
        \item[(1).] $Q^{+\infty}_{P,(\lambda,\epsilon,b)}\subset\Lambda(\Gamma^+_{P,(\lambda,\epsilon,b)})\subset \Pi_1(P)$;
        \item[(2).] There exists $b'\geqslant 0$, such that $\Lambda(\Gamma^+_{P,(\lambda,\epsilon,b)})\subset Q^{+\infty}_{P,(1,0,b')}$;
        \item[(3).] $Q^{+\infty}_{P,(1,0,b)}=\Lambda(\Gamma^+_{P,(1,0,b)})$ is a closed set for any $b\geqslant 0$. 
    \end{itemize}
\end{lemma}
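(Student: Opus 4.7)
The plan is to treat the three parts uniformly via the fellow traveler property (Proposition~\ref{FTP}), a Gromov product computation for convergence in $\partial\Gamma$, and an Arzel\`a--Ascoli-type compactness argument for geodesic rays.

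For part (1), the first inclusion $Q^{+\infty}_{P,(\lambda,\epsilon,b)} \subset \Lambda(\Gamma^+_{P,(\lambda,\epsilon,b)})$ I would prove by approximating a quasi-geodesic ray by vertices. Given $x = l(+\infty)$ with $l \in Q_{P,(\lambda,\epsilon,b)}$, choose for each large $n$ a vertex $\gamma_n \in \Gamma$ within distance $\tfrac{1}{2}$ of $l(n)$ and let $l_n$ be the perturbation of $l$ that agrees with $l$ outside a unit window around $n$ and takes the value $\gamma_n$ at $n$. Since $l_n$ differs from $l$ by at most $\tfrac{1}{2}$ pointwise, it remains a quasi-geodesic with essentially the same constants, and so $\gamma_n \in \Gamma^+_{P,(\lambda,\epsilon,b)}$; as $\gamma_n \to x$ this establishes the inclusion. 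For the second inclusion $\Lambda(\Gamma^+_{P,(\lambda,\epsilon,b)}) \subset \Pi_1(P)$, take $\gamma_n \to x$ with $\gamma_n = l_n(t_n)$, $t_n \geqslant 0$, $l_n \in Q_{P,(\lambda,\epsilon,b)}$, and set $(x_n, y_n) := (l_n(+\infty), l_n(-\infty)) \in P$. Since $\gamma_n$ lies on a quasi-geodesic from near $\id$ to $x_n$, a Gromov-product estimate gives $(x_n \,|\, \gamma_n)_{\id} \gtrsim |\gamma_n|$; combined with $(\gamma_n \,|\, x)_{\id} \to \infty$ and $\delta$-hyperbolicity this yields $x_n \to x$. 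Pass to a subsequence so that $y_n \to y' \in \partial\Gamma$. The crucial point is $y' \neq x$: otherwise both endpoints of $l_n$ would cluster at $x$, and then any geodesic with the same endpoints as $l_n$, and hence by fellow-traveling $l_n$ itself, would escape every bounded neighborhood of $\id$, contradicting $d(l_n(0), \id) \leqslant b$. Closedness of $P$ in $\partial^{(2)}\Gamma$ then gives $(x, y') \in P$, so $x \in \Pi_1(P)$.

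Part (2) continues from the preceding setup: given $(x, y') \in P$, pick a geodesic $l_0$ from $y'$ to $x$. Proposition~\ref{FTP} shows that any geodesic with the same endpoints as $l_n$ stays within $R_0 = R_0(\delta,\lambda,\epsilon)$ of $l_n$, hence within $R_0 + b$ of $\id$. Passing to a limit as $n\to \infty$ (by Arzel\`a--Ascoli applied to these $1$-Lipschitz geodesics) produces a geodesic from $y'$ to $x$ passing within $R_0 + b$ of $\id$. Reparametrize so that $l_0(0)$ realizes the nearest approach to $\id$; then $l_0 \in Q_{P,(1,0,b')}$ with $b' := b + R_0$, and $x = l_0(+\infty) \in Q^{+\infty}_{P,(1,0,b')}$.

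Part (3) specializes to the case $(\lambda,\epsilon) = (1,0)$. The inclusion $Q^{+\infty}_{P,(1,0,b)} \subset \Lambda(\Gamma^+_{P,(1,0,b)})$ is direct because any unit-speed geodesic in $\Cay(\Gamma, S)$ meets $\Gamma$ at an integer translate of its parameter, producing vertices $l(t_n) \to x$ with $l(t_n) \in \Gamma^+_{P,(1,0,b)}$ and no perturbation needed. For the reverse inclusion, apply Arzel\`a--Ascoli to the $1$-Lipschitz family $\{l_n\}$ with $d(l_n(0), \id) \leqslant b$: a subsequence converges uniformly on compact sets to a geodesic $l_\infty$ with $d(l_\infty(0), \id) \leqslant b$ and $l_\infty(\pm\infty) = \lim l_n(\pm\infty)$, where the non-collision argument from (1) guarantees $l_\infty(-\infty) \neq l_\infty(+\infty)$, so $l_\infty \in Q_{P,(1,0,b)}$ and $x \in Q^{+\infty}_{P,(1,0,b)}$. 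Closedness is automatic since $\Lambda(\Gamma^+_{P,(1,0,b)}) = \overline{\Gamma^+_{P,(1,0,b)}} \cap \partial\Gamma$ is closed in $\partial\Gamma$. The hardest step throughout is the non-collision claim $y' \neq x$: it encodes the quantitative hyperbolic fact that a family of $(\lambda,\epsilon)$-quasi-geodesics whose two endpoints both accumulate at a single boundary point must escape every bounded neighborhood of any fixed interior point; once this is in hand, the rest reduces to routine applications of Proposition~\ref{FTP}, Gromov-product arithmetic, and compactness.
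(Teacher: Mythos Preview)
Your argument is essentially correct and close in spirit to the paper's, but the organization differs and there is one small slip worth noting.

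For the first inclusion in (1), your perturbation $l_n$ is a $(\lambda,\epsilon')$-quasi-geodesic with $\epsilon'$ slightly larger than $\epsilon$, so strictly speaking you only get $\gamma_n\in\Gamma^+_{P,(\lambda,\epsilon',b)}$, not $\Gamma^+_{P,(\lambda,\epsilon,b)}$. The fix is simpler than your perturbation: a continuous map $l:\mathbb{R}\to\Cay(\Gamma,S)$ into a graph with $l(+\infty)\in\partial\Gamma$ must pass through infinitely many vertices on $[0,+\infty)$, so there are $t_n\to+\infty$ with $l(t_n)\in\Gamma$, and these lie in $\Gamma^+_{P,(\lambda,\epsilon,b)}$ with the original constants. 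This is why the paper calls the inclusion ``obvious''.

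For the second inclusion in (1) and for (2), the paper takes a shorter route: it immediately replaces each $l_n$ by a genuine geodesic $l_n'$ with the same endpoints (Proposition~\ref{FTP}), reparametrizes so that $d(l_n'(0),\id)\leqslant b+R_0+1$, and applies Arzel\`a--Ascoli directly to the $l_n'$. The limit $l$ is then automatically a geodesic, so its endpoints are distinct and lie in $P$ by closedness; this yields $x\in Q^{+\infty}_{P,(1,0,b+R_0+1)}\subset\Pi_1(P)$ in one stroke. Your approach instead tracks the endpoints $(x_n,y_n)$ separately via Gromov products and isolates the non-collision $y'\neq x$ as a standalone claim; this is correct but slightly longer, since in the paper's arrangement the non-collision is absorbed into the fact that a limit of geodesics through a bounded set is again a geodesic. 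Your explicit Gromov-product bookkeeping does buy a clearer picture of \emph{why} $x_n\to x$, which the paper leaves implicit.

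Part (3) is handled the same way in both: specialize to $l_n=l_n'$ and repeat.
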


\begin{proof}
    The inclusion $Q^{+\infty}_{P,(\lambda,\epsilon,b)}\subset\Lambda(\Gamma^+_{P,(\lambda,\epsilon,b)})$ is obvious. Let $R_0\geqslant 0$ be the upper bound of the Hausdorff distance between two $(\lambda, \epsilon)$-quasi-geodesics given  and let $R\geqslant 0$ be the order preserving upper bound of the Hausdorff distance between two $(\lambda, \epsilon)$-quasi-geodesics by Proposition \ref{FTPs}. Let $\{\gamma_n\}$ be a sequence in $\Gamma^+_{P,(\lambda,\epsilon,b)}$ and $\gamma_n\rightarrow x\in \partial\Gamma$ as $n\rightarrow \infty$.  Then each $\gamma_n$ is on a $(\lambda,\epsilon)$-quasi-geodesic $l_n\in Q_{P,(\lambda,\epsilon,b)}$. Let $l'_n$ be a geodesic in  $\Cay(\Gamma,S)$ with $(l'_n(+\infty),l'_n(-\infty))=(l_n(+\infty),l_n(-\infty))$. Then the Hausdorff distance between $l_n$ and $l_n'$ are bounded by a constant $R_0$. So $l'_n$ intersects the ball of radius $b+R_0$ centered at $\id$. By reparametrizing $l'_n$ by a translation if necessary, we may assume $d(l_n'(0),\id)\leqslant b+R_0+1$.  Then there exists $s_n\geqslant 0$, such that $s_n\rightarrow +\infty$ as $n\rightarrow \infty$ and $d(\gamma_n,l_n'(s_n))\leqslant R+1$, so $l_n'(s_n)\rightarrow x$ as $n\rightarrow \infty$. Since $\{l_n'(0)\}$ is bounded, by taking a subsequence, we may assume $l_n'$ converges to a geodesic $l$, with $d(l(0),\id)\leqslant b+R_0+1$. Then $(l(+\infty),l(-\infty))\in P$ because $P$ is closed, and $l(+\infty)=x$. Hence $x\in Q^{+\infty}_{P,(1,0,b+R_0+1)}\subset \Pi_1(P)$, which proves (1) and (2).\\
    
    For (3), to check $Q^{+\infty}_{P,(1,0,b)}=\Lambda(\Gamma^+_{P,(1,0,b)})$ and closeness, we repeat the above argument by choosing $l_n=l_n'$.\\
\end{proof}

It is clear that $Q^{+\infty}_{P,(\lambda,\epsilon,b)}\subset Q^{+\infty}_{P,(\lambda',\epsilon',b')}$ and $\Lambda(\Gamma^+_{P,(\lambda,\epsilon,b)}) \subset \Lambda(\Gamma^+_{P,(\lambda',\epsilon',b')})$ when $\lambda\leqslant \lambda'$, 
$\epsilon\leqslant \epsilon'$ and $b\leqslant b'$ since $\Gamma^+_{P,(\lambda,\epsilon,b)} \subset \Gamma^+_{P,(\lambda',\epsilon',b')}$. Moreover, we have

\begin{lemma}\label{unionconverge}
    $$\bigcup_{b\geqslant 0} Q^{+\infty}_{P,(1,0,b)} = \bigcup_{\lambda\geqslant 1, \epsilon\geqslant 0, b\geqslant 0} Q^{+\infty}_{P,(\lambda,\epsilon,b)} = \bigcup_{\lambda\geqslant 1, \epsilon\geqslant 0, b\geqslant 0} \Lambda(\Gamma^+_{P,(\lambda,\epsilon,b)}) =\Pi_1(P).$$
\end{lemma}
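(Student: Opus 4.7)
The plan is to split the proof into two stages. First I show that the three unions on the left-hand side coincide, using only Lemma \ref{preunionconverge} together with the obvious monotonicity of the families $Q^{+\infty}_{P,(\lambda,\epsilon,b)}$ and $\Lambda(\Gamma^+_{P,(\lambda,\epsilon,b)})$ in the parameters $(\lambda,\epsilon,b)$. Then I show that their common value equals $\Pi_1(P)$.

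For the first stage I chain three inclusions. The inclusion $\bigcup_{b\geqslant 0} Q^{+\infty}_{P,(1,0,b)}\subset\bigcup_{\lambda,\epsilon,b}Q^{+\infty}_{P,(\lambda,\epsilon,b)}$ is tautological by taking $\lambda=1$, $\epsilon=0$. The inclusion $\bigcup_{\lambda,\epsilon,b}Q^{+\infty}_{P,(\lambda,\epsilon,b)}\subset \bigcup_{\lambda,\epsilon,b}\Lambda(\Gamma^+_{P,(\lambda,\epsilon,b)})$ is immediate from Lemma \ref{preunionconverge}(1). Finally, by Lemma \ref{preunionconverge}(2), for each triple $(\lambda,\epsilon,b)$ there exists $b'\geqslant 0$ with $\Lambda(\Gamma^+_{P,(\lambda,\epsilon,b)})\subset Q^{+\infty}_{P,(1,0,b')}$, which yields $\bigcup_{\lambda,\epsilon,b}\Lambda(\Gamma^+_{P,(\lambda,\epsilon,b)})\subset \bigcup_{b\geqslant 0} Q^{+\infty}_{P,(1,0,b)}$ and closes the chain.

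For the second stage, the inclusion $\bigcup_{\lambda,\epsilon,b}Q^{+\infty}_{P,(\lambda,\epsilon,b)}\subset \Pi_1(P)$ is again a direct consequence of Lemma \ref{preunionconverge}(1). The only substantive point is the reverse inclusion $\Pi_1(P)\subset \bigcup_{b\geqslant 0} Q^{+\infty}_{P,(1,0,b)}$. Given $x\in\Pi_1(P)$, I choose $y\in\partial\Gamma$ with $(x,y)\in P$; note $x\neq y$ since $P\subset\partial^{(2)}\Gamma$. I then invoke the standard fact that in a proper $\delta$-hyperbolic geodesic space any two distinct boundary points are joined by a biinfinite geodesic, applied to $\Cay(\Gamma,S)$ (obtained via an Arzel\`a--Ascoli argument on geodesic segments joining interior points converging to $x$ and $y$, using properness of the Cayley graph and $\delta$-slimness of triangles). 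Let $l:\mathbb{R}\rightarrow \Cay(\Gamma,S)$ be such a geodesic with $l(+\infty)=x$ and $l(-\infty)=y$. Reparametrizing by a translation so that $l(0)$ is a point of $l(\mathbb{R})$ nearest to $\id$, and setting $b:=d(l(0),\id)$, I obtain $l\in Q_{P,(1,0,b)}$ and hence $x=l(+\infty)\in Q^{+\infty}_{P,(1,0,b)}$, as required.

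The only non-formal input is the existence of the biinfinite geodesic between $x$ and $y$, which is standard for proper hyperbolic geodesic spaces; the remainder is just bookkeeping with the inclusions already recorded in Lemma \ref{preunionconverge}. I do not anticipate any real obstacle here.
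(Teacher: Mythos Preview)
Your proposal is correct and follows essentially the same approach as the paper: both chain the inclusions using Lemma~\ref{preunionconverge}(1) and then close the loop by producing, for a given $x\in\Pi_1(P)$, a biinfinite geodesic $l$ with $l(+\infty)=x$ and $(l(+\infty),l(-\infty))\in P$, so that $x\in Q^{+\infty}_{P,(1,0,|l(0)|)}$. The only cosmetic difference is that the paper writes a single cyclic chain $A\subset B\subset C\subset \Pi_1(P)\subset A$, whereas you first close $A=B=C$ (invoking Lemma~\ref{preunionconverge}(2), which the paper does not actually need) and then handle $\Pi_1(P)$ separately.
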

\begin{proof}
It is clear that $$\bigcup_{b\geqslant 0} Q^{+\infty}_{P,(1,0,b)} \subset \bigcup_{\lambda\geqslant 1, \epsilon\geqslant 0, b\geqslant 0} Q^{+\infty}_{P,(\lambda,\epsilon,b)}\subset \bigcup_{\lambda\geqslant 1, \epsilon\geqslant 0, b\geqslant 0} \Lambda(\Gamma^+_{P,(\lambda,\epsilon,b)}) \subset \Pi_1(P).$$ Let $x\in \Pi_1(P)$ be a point on the boundary with $(x,y)\in P$. There exists a geodesic $l:\mathbb{R}\rightarrow \Cay(\Gamma,S)$ with $(l(+\infty),l(-\infty))=(x,y)$. Then $l\in Q^{+}_{P,(1,0,|l(0)|)}$, which implies $$\Pi_1(P) \subset \bigcup_{b\geqslant 0} Q^{+\infty}_{P,(1,0,b)}.$$
\end{proof}

Similar to the extension of $\Gamma^+_P$, we define $$\Gamma^-_{P,(\lambda,\epsilon,b)}=\{ l(-t)\ |\ l\in Q_{P,(\lambda,\epsilon,b)}, t\in \mathbb{R}_+\ \text{with}\ l(-t)\in \Gamma \}.$$ and $$Q^{-\infty}_{P,(\lambda,\epsilon,b)}= \{ l(-\infty)\in \partial\Gamma \ |\ l\in Q_{P,(\lambda,\epsilon,b)} \}.$$ By the same argument as in Lemma \ref{inverseflowinverseelt}, we have $\Gamma_{\hat{P},(\lambda,\epsilon,b)}^+= \Gamma_{P,(\lambda,\epsilon,b)}^-=(\Gamma_{P,(\lambda,\epsilon,b)}^+)^{-1}$ and $Q^{-\infty}_{P,(\lambda,\epsilon,b)}=Q^{+\infty}_{\hat{P},(\lambda,\epsilon,b)}$.\\
    
\subsection{The limit maps of $k$-Anosov representations over $S_P$}\ \\

We now assume $\rho: \Gamma \rightarrow \mathrm{GL}(d,\mathbb{R})$ is $k$-dominated on $\Gamma^+_{P,(\lambda,\epsilon,b)}$ for any $\lambda\geqslant 1$ and $\epsilon,b\geqslant 0$. We define the limit maps and discuss several properties of them, some of which will also be used in Section \ref{holdercts}. Then we show (3)$\Rightarrow$(4) in Theorem \ref{MT1}.\\

We firstly define the strongly dynamics preserving property for the limit maps. It was introduced by Canary--Zhang--Zimmer \cite{CZZ} in their study of Anosov representations of geometrically finite Fuchsian groups. We can generalize the concept to $k$-dominated representations on $\Gamma^+_{P,(\lambda,\epsilon,b)}$.\\

Let $\zeta^k:\Pi_1(P)\rightarrow \mathrm{Gr}_k(\mathbb{R}^d)$ and $\zeta_{d-k}:\Pi_2(P)\rightarrow \mathrm{Gr}_{d-k}(\mathbb{R}^d)$ be two $\rho$-equivariant maps (i.e., $\rho(\gamma)\zeta^k(x)=\zeta^k(\gamma x)$ and $\rho(\gamma)\zeta_{d-k}(y)=\zeta_{d-k}(\gamma y)$ for any $\gamma \in \Gamma$, $x\in\Pi_1(P)$ and $y\in \Pi_2(P)$), and the pair $(\zeta^k,\zeta_{d-k}):P\rightarrow \mathrm{Gr}_k(\mathbb{R}^d)\times \mathrm{Gr}_{d-k}(\mathbb{R}^d)$ is transverse (i.e., $\zeta^k(x)$ and $\zeta_{d-k}(y)$ are transverse for any $(x,y)\in P$) and continuous.
    
\begin{definition}\label{SDPdef}
    We say that a continuous, transverse pair $(\zeta^k,\zeta_{d-k})$ of two $\rho$-equivariant maps, is \emph{strongly dynamics preserving of index $k$ on $P$} if for any given constants $\lambda\geqslant 1$ and $\epsilon,b\geqslant 0$, points $x,y\in \partial\Gamma$, and a sequence $\{\gamma_n\}$ in $\Gamma^+_{P,(\lambda,\epsilon,b)}$, with $\gamma_n\rightarrow x$ and $\gamma_n^{-1}\rightarrow y$ as $n\rightarrow +\infty$, we have  $$\lim\limits_{n\rightarrow +\infty}\rho(\gamma_n)V=\zeta^k(x)$$ for any $V\in \mathrm{Gr}_k(\mathbb{R}^d)$ transverse to $\zeta_{d-k}(y)$.
\end{definition}

It is not hard to see that once there exists a pair $(\zeta^k,\zeta_{d-k})$ which satisfies the conditions in Definition \ref{SDPdef}, it is unique. We denote this unique pair by $(\xi^k,\xi_{d-k})$, called the pair of limit maps of $\rho$. Now we show the existence by construction.

\begin{proposition}\label{uniformconvergenceoflimitmaps}
    Let $\lambda\geqslant 1$ and $\epsilon,b\geqslant 0$ be constants and let $x$ be a point in $Q^{+\infty}_{P,(\lambda,\epsilon,b)}$. Let $l\in Q_{P,(\lambda,\epsilon,b)}$ with $l(+\infty)=x$. Let $\{\gamma_n\}$ denote all the elements in $\Gamma$ along $l([0,+\infty))$ in order. Then $\xi^k_{(\lambda,\epsilon,b)}(x)=\xi^k(x)=lim_{n\rightarrow +\infty} U_k(\rho(\gamma_n))$ is a well-defined map from $Q^{+\infty}_{P,(\lambda,\epsilon,b)}$ to $\mathrm{Gr}_k(\mathbb{R}^d)$. Moreover, the convergence of the above limit is uniform for any $l\in Q_{P,(\lambda,\epsilon,b)}$.
\end{proposition}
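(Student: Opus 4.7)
The plan is a telescoping Cauchy argument based on Lemma \ref{estsingularvalue}(3), followed by a fellow-traveler comparison to show that the limit depends only on $x$ and not on the chosen quasi-geodesic or parameter triple.

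First, I fix $l\in Q_{P,(\lambda,\epsilon,b)}$ with $l(+\infty)=x$ and enumerate its vertex visits $\gamma_1,\gamma_2,\dots$ in order of increasing parameter $t_n$. Between any two consecutive vertex visits the quasi-geodesic lies inside a single open edge of $\Cay(\Gamma,S)$, so $\eta_n:=\gamma_n^{-1}\gamma_{n+1}$ satisfies $|\eta_n|\leqslant 1$ (after collapsing repeated visits), and the elementary bound $M:=\max_{s\in S}\max(\Vert\rho(s)\Vert,\Vert\rho(s)^{-1}\Vert)$ gives $\Vert\rho(\eta_n)\Vert\cdot \Vert\rho(\eta_n)^{-1}\Vert\leqslant M^{2}$. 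Applying Lemma \ref{estsingularvalue}(3) with $A=\rho(\gamma_n)$ and $B=\rho(\eta_n)$ yields
$$d\bigl(U_k(\rho(\gamma_n)),U_k(\rho(\gamma_{n+1}))\bigr)\leqslant M^{2}\cdot\frac{\sigma_{k+1}(\rho(\gamma_n))}{\sigma_k(\rho(\gamma_n))}.$$
Every $\gamma_n$ with $t_n\geqslant 0$ lies in $\Gamma^+_{P,(\lambda,\epsilon,b)}$, so by Observation \ref{observationext} the singular-value ratio is bounded by $Ce^{-c|\gamma_n|}$; combined with the lower bound $|\gamma_n|\geqslant \lambda^{-1}t_n-\epsilon-b$ coming from the quasi-isometric embedding condition, the telescoping sum $\sum_{n\geqslant N}d(U_k(\rho(\gamma_n)),U_k(\rho(\gamma_{n+1})))$ is a geometric series dominated by $C'e^{-c'|\gamma_N|}$ with constants $C',c'>0$ depending only on $\rho,\lambda,\epsilon,b$, and \emph{not} on $l$. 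This shows $\{U_k(\rho(\gamma_n))\}$ is Cauchy, and the explicit tail bound in $|\gamma_N|$ delivers the uniform convergence claim over $l\in Q_{P,(\lambda,\epsilon,b)}$.

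To see the limit depends only on $x$, suppose $l,l'\in Q_{P,(\lambda,\epsilon,b)}$ both satisfy $l(+\infty)=l'(+\infty)=x$. By Proposition \ref{FTPs} there is a constant $R=R(\delta,\lambda,\epsilon)$ such that for each sufficiently large $n$ there is a vertex $\gamma'_{m(n)}$ along $l'$ with $d(\gamma_n,\gamma'_{m(n)})\leqslant R$ and $t'_{m(n)}\to+\infty$. Setting $\mu_n=\gamma_n^{-1}\gamma'_{m(n)}$, the bound $|\mu_n|\leqslant R$ gives $\Vert\rho(\mu_n)\Vert\cdot\Vert\rho(\mu_n)^{-1}\Vert\leqslant M^{2R}$, and a second application of Lemma \ref{estsingularvalue}(3) produces
$$d\bigl(U_k(\rho(\gamma_n)),U_k(\rho(\gamma'_{m(n)}))\bigr)\leqslant M^{2R}\cdot Ce^{-c|\gamma_n|}\longrightarrow 0,$$
so both sequences share a common limit $\xi^k_{(\lambda,\epsilon,b)}(x)$ independent of $l$. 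The same comparison, applied across quasi-geodesics drawn from distinct parameter triples $(\lambda_i,\epsilon_i,b_i)$ but sharing the forward endpoint $x$, identifies the corresponding limits, so the value $\xi^k(x)$ is unambiguously defined on all of $\Pi_1(P)$.

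The main technical point is the uniform control on consecutive vertex spacing along a quasi-geodesic, which is what keeps the factor $\Vert B\Vert\cdot\Vert B^{-1}\Vert$ in Lemma \ref{estsingularvalue}(3) bounded independently of $n$; once that is secured, the exponential decay of $\sigma_{k+1}/\sigma_k$ supplied by the $k$-dominated hypothesis turns everything into a convergent geometric series. The order-preserving form of the fellow-traveler property (Proposition \ref{FTPs}) is essential for the comparison step, because it produces the matching $n\mapsto m(n)$ in the forward direction along which the exponential decay takes place.
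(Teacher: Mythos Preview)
Your argument is essentially the same as the paper's: a telescoping Cauchy estimate via Lemma \ref{estsingularvalue}(3) for convergence, followed by a fellow-traveler comparison for well-definedness. Two small points of imprecision are worth noting. First, your bound $|\eta_n|\leqslant 1$ presupposes that $l$ is continuous; the paper instead uses the coarser (and unconditional) bound $|\gamma_{n+1}^{-1}\gamma_n|\leqslant \lambda+\epsilon$, which works regardless. Second, Proposition \ref{FTPs} as stated requires $c_1(\pm\infty)=c_2(\pm\infty)$, whereas your $l$ and $l'$ only share the forward endpoint; the paper handles this by applying Proposition \ref{FTP} to the forward rays, treated as $(\lambda,\epsilon+2b)$-quasi-geodesic rays with common endpoint $x$. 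Neither issue affects the substance of the proof.
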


\begin{proof}
We check that $U_k(\rho(\gamma_n))$ is a Cauchy sequence. $$d(U_k(\rho(\gamma_n)),U_k(\rho(\gamma_{n+1}))) \leqslant \Vert \rho(\gamma_n)^{-1}\rho(\gamma_{n+1})\Vert \cdot\Vert \rho(\gamma_{n+1})^{-1}\rho(\gamma_{n})\Vert \cdot\dfrac{\sigma_{k+1}(\rho(\gamma_{n}))}{\sigma_k(\rho(\gamma_{n}))}$$ by Lemma \ref{estsingularvalue} (3). The first term $\Vert \rho(\gamma_n)^{-1}\rho(\gamma_{n+1})\Vert \cdot\Vert \rho(\gamma_{n+1})^{-1}\rho(\gamma_{n})\Vert $ is uniformly bounded since for all $n$, $\gamma_{n+1}^{-1}\gamma_n$ lies in the finite set $\{\gamma\in\Gamma \ |\ |\gamma|\leqslant \lambda+\epsilon\}$. The second term $\frac{\sigma_{k+1}(\rho(\gamma_{n}))}{\sigma_k(\rho(\gamma_{n}))}$ converges to $0$ since $\gamma_n\in\Gamma^+_{P,(\lambda,\epsilon,b)}$ and $\rho$ is $k$-dominated on $\Gamma^+_{P,(\lambda,\epsilon,b)}$. This shows uniform convergence.\\

To show $\xi^k$ is well-defined, let $l'\in Q_{P,(\lambda,\epsilon,b)}$ be another $(\lambda,\epsilon)$-quasi-geodesic with $l'(+\infty)=x$. By Proposition \ref{FTP}, let $R_0\geqslant 0$ be the upper bound of the Hausdorff distance between two $(\lambda, \epsilon+2b)$-quasi-geodesics. Then the Hausdorff distance between $l([0,+\infty))$ and $l'([0,+\infty))$ is bounded by a constant $R_0$. Let $\gamma'_n$ be an element on $l'([0,+\infty))$ such that $d(\gamma_n,\gamma'_n)\leqslant R_0$ for each $n$. Then apply Lemma \ref{estsingularvalue} (3) for $\gamma_n$ and $\gamma'_n$, we have $\xi^k(x)$ is well-defined.\\
\end{proof}

\begin{remark}
    If we have constants $\lambda\leqslant \lambda'$, $\epsilon\leqslant \epsilon'$ and $b\leqslant b'$, $\Gamma^+_{P,(\lambda,\epsilon,b)} \subset \Gamma^+_{P,(\lambda',\epsilon',b')}$, $Q^{+\infty}_{P,(\lambda,\epsilon,b)}\subset Q^{+\infty}_{P,(\lambda',\epsilon',b')}$, then we have $\xi^k_{(\lambda',\epsilon',b')}$ and $\xi^k_{(\lambda,\epsilon,b)}$ agree on $Q^{+\infty}_{P,(\lambda,\epsilon,b)}$. By Lemma \ref{unionconverge}, $\xi^k$ is well-defined on $\Pi_1(P)$.
\end{remark}

We may define the limit map $\xi_{d-k}:\Pi_2(P)\rightarrow \mathrm{Gr}_{d-k}(\mathbb{R}^d)$ in the following way. We write $\xi^k=\xi^k_P$ to emphasize the limit map is defined for $\rho$ which is $k$-dominated on $\Gamma^+_P$. Since $\rho$ is also $(d-k)$-dominated on $\Gamma^+_{\hat{P}}$, by the above discussion, we defined the limit map $\xi_{d-k}=\xi_{d-k,P}=\xi^{d-k}_{\hat{P}}$.\\

Then our goal is to check the limit maps $\xi^k$ and $\xi_{d-k}$ satisfy the conditions in Definition \ref{SDPdef}, i.e., the following theorem.

\begin{theorem}\label{1to4mainthm1}
    If $\rho$ is $k$-dominated on $\Gamma^+_P$, then the limit maps $\xi^k:\Pi_1(P)\rightarrow \mathrm{Gr}_k(\mathbb{R}^d)$ and $\xi_{d-k}:\Pi_2(P)\rightarrow \mathrm{Gr}_{d-k}(\mathbb{R}^d)$ are $\rho$-equivariant, and $(\xi^k,\xi_{d-k}):P\rightarrow \mathrm{Gr}_k(\mathbb{R}^d)\times \mathrm{Gr}_{d-k}(\mathbb{R}^d)$ is continuous, transverse and strongly dynamics preserving of index $k$ on $P$.
\end{theorem}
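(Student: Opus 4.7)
The proof addresses four claims in sequence: $\rho$-equivariance, continuity of $(\xi^k,\xi_{d-k})$ on $P$, transversality, and the strongly dynamics preserving property. For equivariance, given $x \in \Pi_1(P)$ and $\gamma \in \Gamma$, I pick a geodesic $l \in Q_{P,(1,0,b)}$ with $l(+\infty)=x$ and let $\gamma_n$ be the consecutive elements of $\Gamma$ along $l|_{[0,+\infty)}$. The translate $\gamma \cdot l$ lies in $Q_{P,(1,0,b')}$ for some $b'$ and has forward endpoint $\gamma x$ with consecutive elements $\gamma\gamma_n$. Lemma \ref{estsingularvalue}(4) yields
\[ d(\rho(\gamma) U_k(\rho(\gamma_n)), U_k(\rho(\gamma \gamma_n))) \leqslant \Vert\rho(\gamma)\Vert \cdot \Vert\rho(\gamma)^{-1}\Vert \cdot \frac{\sigma_{k+1}(\rho(\gamma_n))}{\sigma_k(\rho(\gamma_n))} \to 0. \]
Combining this with Proposition \ref{uniformconvergenceoflimitmaps} applied to both $l$ and $\gamma \cdot l$ gives $\rho(\gamma)\xi^k(x) = \xi^k(\gamma x)$; the argument for $\xi_{d-k}$ is identical.

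For continuity of $(\xi^k,\xi_{d-k})$ on $P$, I fix $(x_n,y_n) \to (x,y)$ in $P$ and choose, for each $n$, a geodesic $l_n$ with endpoints $(x_n,y_n)$ and $l_n(0)$ at uniformly bounded distance from $\id$; this is possible since $(x,y) \in \partial^{(2)}\Gamma$. A diagonal extraction in the proper Cayley graph produces a subsequence converging pointwise to a geodesic $l_\infty \in Q_{P,(1,0,b)}$ whose endpoints are $(x,y)$, via a standard fellow-traveler and visual Gromov-product argument. Given $\epsilon' > 0$, use Proposition \ref{uniformconvergenceoflimitmaps} to fix $N$ with $d(U_k(\rho(l'(N))), \xi^k(l'(+\infty))) < \epsilon'$ for every $l' \in Q_{P,(1,0,b)}$. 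For $n$ large, $l_n(N) = l_\infty(N)$, so $U_k(\rho(l_n(N))) = U_k(\rho(l_\infty(N)))$ is within $\epsilon'$ of $\xi^k(x)$, and combining the two $\epsilon'$-bounds yields $\xi^k(x_n) \to \xi^k(x)$. The convergence $\xi_{d-k}(y_n) \to \xi_{d-k}(y)$ is symmetric.

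Transversality I would derive from the dominated splitting established in Section \ref{secsplitting}. By Theorem \ref{mainthm1} and Theorem \ref{equivdands}, the hypothesis that $\rho$ is $k$-dominated on $\Gamma_P^+$ produces a $k$-dominated splitting $E_\rho|_{S_P} = E^s \oplus E^u$. For $(x,y) \in P$, let $q$ be the projection of $(x,y,0) \in \widetilde{S_P}$ to $S_P$. The dictionary $\psi^t_q \approx \rho(\gamma)^{-1}$ and $\psi^t_{\phi^{-t}(q)} \approx \rho(\eta)$ from Section \ref{equivalence2} (with $\gamma$ and $\eta$ tracking $\widetilde{\phi}^t(z)$ and $\widetilde{\phi}^{-t}(z)$ respectively), together with the identity $S_k(A^{-1}) = U_k(A)$ and Theorem \ref{BGtheorem}, identify $E^s_q = \lim_{n\to\infty} S_k(\psi^n_q) = \xi^k(x)$ and $E^u_q = \lim_{n\to\infty} U_{d-k}(\psi^n_{\phi^{-n}(q)}) = \xi_{d-k}(y)$. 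Since $E^s_q \oplus E^u_q = \mathbb{R}^d$, the maps are transverse at every $(x,y) \in P$.

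Finally, the strongly dynamics preserving property reduces to a weaker Cartan statement: if $\gamma_n \in \Gamma^+_{P,(\lambda,\epsilon,b)}$ and $\gamma_n \to x$, then $U_k(\rho(\gamma_n)) \to \xi^k(x)$. Writing $\gamma_n = l_n(t_n)$ for $l_n \in Q_{P,(\lambda,\epsilon,b)}$ with $t_n \to +\infty$, the fellow-traveler property gives $(\gamma_n \mid l_n(+\infty))_\id \geqslant t_n/\lambda - c$, whence $l_n(+\infty) \to x$ in the visual topology; after extracting so that $l_n(-\infty) \to y^*$, closedness of $P$ forces $(x,y^*) \in P$. Proposition \ref{uniformconvergenceoflimitmaps} provides $d(U_k(\rho(\gamma_n)), \xi^k(l_n(+\infty))) \to 0$, and the pair continuity established above yields $\xi^k(l_n(+\infty)) \to \xi^k(x)$. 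The symmetric argument in $\hat{P}$, using $\gamma_n^{-1} \in \Gamma^+_{\hat{P},(\lambda,\epsilon,b')}$ on the reversed quasi-geodesic, gives $S_{d-k}(\rho(\gamma_n)) = U_{d-k}(\rho(\gamma_n^{-1})) \to \xi_{d-k}(y)$. For $V \in \mathrm{Gr}_k(\mathbb{R}^d)$ transverse to $\xi_{d-k}(y)$, large $n$ makes $V$ uniformly transverse to $S_{d-k}(\rho(\gamma_n))$, whence a standard angle estimate in the spirit of Lemma \ref{estsingularvalue} gives $d(\rho(\gamma_n)V, U_k(\rho(\gamma_n))) \leqslant C \sigma_{k+1}(\rho(\gamma_n))/\sigma_k(\rho(\gamma_n)) \to 0$, and combining with the weaker Cartan property yields $\rho(\gamma_n)V \to \xi^k(x)$. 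The main obstacle is precisely this weaker Cartan: since $\xi^k$ need not be continuous on $\Pi_1(P)$ (Example \ref{exampledisctslimitmap}), one cannot pass to the limit along $l_n(+\infty)$ directly, and the argument must route through the pair continuity on $P$, which forces one to simultaneously control the second endpoint $l_n(-\infty)$.
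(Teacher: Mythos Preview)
Your proof is correct, and the equivariance, continuity, and strongly-dynamics-preserving portions track the paper's argument closely (the paper packages your Cartan reasoning as Proposition~\ref{PkCartan} and your final angle estimate as Lemma~\ref{lemmaczz}, but the substance is the same).

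The one genuine difference is transversality. You derive it by invoking the dominated splitting from Theorems~\ref{mainthm1} and~\ref{equivdands} and identifying $E^s_q=\xi^k(x)$, $E^u_q=\xi_{d-k}(y)$; this is logically sound since Theorem~\ref{equivdands} was proved via Bochi--Gourmelon and does not depend on the present theorem. The paper instead stays on the algebraic side: it uses equivariance to translate so that the geodesic passes through $\id$, and then cites \cite[Lemma~2.5]{BPS}, a direct singular-value argument showing that $\lim U_k(\rho(l(n)))$ and $\lim U_{d-k}(\rho(l(-n)))$ are transverse whenever the domination inequality holds along $l$. Your route buys you nothing extra here and imports more machinery; the paper's route is lighter and keeps Section~\ref{sdpp} independent of the flow-bundle picture, which matters because the paper explicitly advertises this section as ``another algebraic approach'' to the splitting. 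A minor related point: the paper first isolates continuity of $\xi^k$ on each $Q^{+\infty}_{P,(\lambda,\epsilon,b)}$ (Proposition~\ref{ctspiP}) and then deduces pair continuity on $P$, whereas you go directly to pair continuity and route the Cartan argument through it; both work, but the paper's version is what gets reused later for the H\"older estimates in Section~\ref{holdercts}.
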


To prove the theorem, we need the following propositions.

\begin{proposition}\label{ctspiP}
    $\xi^k$ is continuous on $Q^{+\infty}_{P,(\lambda,\epsilon,b)}$ for any $\lambda\geqslant 1 $ and $\epsilon,b\geqslant 0$.
\end{proposition}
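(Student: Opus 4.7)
My plan is to combine the uniform convergence of Proposition \ref{uniformconvergenceoflimitmaps} with a fellow-traveler argument for quasi-geodesic rays whose ideal endpoints converge. Let $x_m \to x$ be a convergent sequence in $Q^{+\infty}_{P,(\lambda,\epsilon,b)}$, fix $\epsilon' > 0$, and pick $l \in Q_{P,(\lambda,\epsilon,b)}$ with $l(+\infty) = x$ and $l_m \in Q_{P,(\lambda,\epsilon,b)}$ with $l_m(+\infty) = x_m$. By the uniform convergence in Proposition \ref{uniformconvergenceoflimitmaps}, there exists $N_0 \in \mathbb{N}$ such that whenever $l' \in Q_{P,(\lambda,\epsilon,b)}$ and $\gamma$ appears at index $\geqslant N_0$ in the ordered enumeration of $\Gamma$-elements along $l'([0,+\infty))$, one has $d(U_k(\rho(\gamma)), \xi^k(l'(+\infty))) < \epsilon'/3$.

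Next I would argue that for every $n \in \mathbb{N}$, for all $m$ sufficiently large, the quasi-geodesic $l_m$ passes within a universal distance $R = R(\delta, \lambda, \epsilon, b)$ of the $n$-th vertex $\eta$ along $l$: this uses that $x_m \to x$ in $\partial \Gamma$ forces the Gromov product $(x_m \mid x)_{\id}$ to tend to $+\infty$, so stability of quasi-geodesics in the $\delta$-hyperbolic space $\Cay(\Gamma, S)$ makes $l$ and $l_m$ fellow travel on initial segments of length going to infinity. Choose an integer $n_1 \geqslant N_0$ to be fixed momentarily; for $m$ large, pick a vertex $\eta_m$ on $l_m$ at index $\geqslant N_0$ with $d(\eta_m, \eta) \leqslant R + 1$, and set $\beta_m = \eta^{-1}\eta_m$, which has word length $\leqslant R+1$ and hence lies in a finite subset of $\Gamma$. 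Applying Lemma \ref{estsingularvalue}(3) with $A = \rho(\eta)$ and $B = \rho(\beta_m)$ yields
\[
d(U_k(\rho(\eta)), U_k(\rho(\eta_m))) \leqslant \|\rho(\beta_m)\| \cdot \|\rho(\beta_m)^{-1}\| \cdot \frac{\sigma_{k+1}(\rho(\eta))}{\sigma_k(\rho(\eta))}.
\]
The first two factors are uniformly bounded in $m$ since $\beta_m$ lies in a finite set, while the last factor tends to $0$ as $n_1 \to +\infty$ because $\eta \in \Gamma^+_{P,(\lambda,\epsilon,b)}$ with $|\eta| \to +\infty$, and by Observation \ref{observationext} $\rho$ is $k$-dominated on $\Gamma^+_{P,(\lambda,\epsilon,b)}$. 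Choosing $n_1$ so that this estimate is $< \epsilon'/3$ and combining three estimates via the triangle inequality gives $d(\xi^k(x_m), \xi^k(x)) < \epsilon'$ for all sufficiently large $m$, proving continuity.

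The main obstacle is the fellow-traveling step, because Propositions \ref{FTP} and \ref{FTPs} as stated apply only to quasi-geodesics with identical endpoints, whereas here $l$ and $l_m$ have (possibly different) endpoints $x, x_m$ that are only assumed to converge in $\partial\Gamma$. The required statement — that $(\lambda,\epsilon)$-quasi-geodesic rays from a bounded region going to boundary points whose Gromov product tends to infinity must fellow travel on arbitrarily long initial segments — is standard in $\delta$-hyperbolic geometry but is not recorded explicitly in the preliminaries; it can be justified by applying the thin-triangle property to a triangle built from a long initial segment of $l$, a corresponding long initial segment of $l_m$, and a short connecting path near $\id$, combined with Morse stability to pass between geodesics and quasi-geodesics.
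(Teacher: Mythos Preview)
Your proposal is correct and follows the same overall three-term triangle-inequality strategy as the paper: uniform convergence from Proposition~\ref{uniformconvergenceoflimitmaps} handles the outer two terms, and Lemma~\ref{estsingularvalue}(3) handles the middle term once you know the two quasi-geodesics pass close to each other far out.

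The one genuine technical difference is in how that closeness is obtained. The paper first invokes Lemma~\ref{preunionconverge}(2) to reduce to the case $(\lambda,\epsilon)=(1,0)$, then uses compactness of the space of geodesics through a bounded region: after passing to a subsequence, the $l_n$ converge uniformly on compacts to a geodesic $l$, and one simply picks a diagonal sequence $m_n\to\infty$ with $d(l_n(m_n),l(m_n))\to 0$. This sidesteps entirely the issue you flag as an obstacle, since no fellow-traveling statement for rays with \emph{different} endpoints is ever needed. Your route---keeping the quasi-geodesics and using that $(x_m\cdot x)_{\id}\to\infty$ forces long initial fellow-traveling---is more direct and avoids subsequences, but, as you note, requires a lemma not recorded in the preliminaries. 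Both arguments are standard; the paper's buys a cleaner fit with what is already stated, yours buys a slightly more explicit $\epsilon$--$N$ proof.
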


\begin{proof}
    By Lemma \ref{preunionconverge} (1) and (2), it is sufficient to show  $\xi^k$ is continuous on $Q^{+\infty}_{P,(1,0,b)}$ for any $b\geqslant 0$. Let $x_n\in Q^{+\infty}_{P,(1,0,b)}$ that converges to $x\in Q^{+\infty}_{P,(1,0,b)}$ as $n\rightarrow +\infty$. We claim that any subsequence of $x_n$ has a subsequence such that $\xi^k(x_n)\rightarrow \xi^k(x)$.\\
    
    Let $l_n\in Q_{P,(1,0,b)}$ be a geodesic with $l_n(+\infty)=x_n$. Up to a bounded reparametrization by translation for each $l_n$, we may assume $l_n(m)\in\Gamma$ for any $m\in\mathbb{Z}$. We also assume that up to a subsequence, $l_n$ converge to a geodesic $l$ with $d(l(0),\id)\leqslant b$ and $l(+\infty)=x$. Therefore, up to a subsequence again, we may moreover assume that up to a subsequence, $l_n(0)=l(0)$ for all $n$. Then $$\xi^k(x_n)= \lim\limits_{m\rightarrow +\infty} U_k(\rho(l_n(m))),$$ and $$\xi^k(x)= \lim\limits_{m\rightarrow +\infty} U_k(\rho(l(m))).$$ Since the convergence is uniform for all $l_n$ and $l$, it is sufficient to find a sequence of positive integers $m_n$, such that $m_n\rightarrow +\infty$ and $$\lim\limits_{n\rightarrow +\infty} d(U_k(\rho(l_n(m_n))),U_k(\rho(l(m_n))))=0.$$
    
    Apply Lemma \ref{estsingularvalue} (3), we have $$d(U_k(\rho(l_n(m_n))),U_k(\rho(l(m_n)))) $$ $$\leqslant \Vert \rho(l_n(m_n)^{-1}l(m_n))\Vert \cdot\Vert \rho(l(m_n)^{-1}l_n(m_n))\Vert \cdot\dfrac{\sigma_{k+1}(\rho(l(m_n)))}{\sigma_{k}(\rho(l(m_n)))}$$ Since $l_n\rightarrow l$ as $n\rightarrow +\infty$ uniformly on any compact set, then for any $r\in \mathbb{Z}_+$, there exists $N_p>0$, such that for any $n\geqslant N_r$, $$ \max\{d(l_n(t),l(t)) \ | \  t\in [0,r]\} \leqslant \dfrac{1}{r}.$$ For any $r>0$, we may choose $N_r$, such that $\{N_r\}$ is an increasing sequence with $N_r\rightarrow +\infty$ as $r\rightarrow +\infty$. Let $m_n=r$ if $n\in [N_r,N_{r+1})$. Then $m_n\rightarrow +\infty$ as $n\rightarrow +\infty$ and $$d(l_n(m_n),l(m_n))\leqslant \dfrac{1}{m_n}.$$ Hence we have that $|l_n(m_n)^{-1}l(m_n)|=d(l_n(m_n),l(m_n))\rightarrow 0$ as $n\rightarrow +\infty$. Therefore $\Vert \rho(l_n(m_n)^{-1}l(m_n))\Vert \cdot\Vert \rho(l(m_n)^{-1}l_n'(m_n))\Vert $ is bounded, and $\dfrac{\sigma_{k+1}(\rho(l(m_n)))}{\sigma_{k}(\rho(l(m_n)))}$ converges to $0$ by $\rho$ is $k$-dominated on $\Gamma^+_{P,(\lambda,\epsilon,b+C)}$.\\
\end{proof}

\begin{remark}
    We will show that $\xi^k$ is, moreover, Hölder continuous on $Q^{+\infty}_{P,(\lambda,\epsilon,b)}$ in section \ref{holdercts} by a similar, but more precise computation.
\end{remark}

However, $\xi^k$ may not be continuous on $\Pi_1(P)$. Here is a counter-example.

\begin{example}\label{exampledisctslimitmap}
    Let $\rho$ be a representation of the free group $F_2=F(a,b)$ generated by $\{a,b\}$ into $\mathrm{GL}(2,\mathbb{R})$ with 
    $\rho(a)=\left(\begin{matrix}
        2 & 0\\
        0 & \frac{1}{2}
    \end{matrix} \right)$
    and
    $\rho(b)=\left(\begin{matrix}
        0 & 1\\
        -1 & 0
    \end{matrix}\right)$.
Then we have $\rho$ is $1$-Anosov over $S_P$ with $P=F_2\cdot\{(a^+,a^-)\}$. By computation, we have $$\xi^k(a^+)= \lim\limits_{n\rightarrow +\infty} U_1(\rho(a^n))= \mathrm{span}\{(1,0)\}$$ while $\lim\limits_{m\rightarrow +\infty}a^mb\cdot a^+= a^+$ and $$\xi^k(a^mb\cdot a^+)= \lim\limits_{n\rightarrow +\infty} U_1(\rho(a^mba^n))= \mathrm{span}\{(0,1)\}.$$ Hence $\xi^k$ is not continuous on $\Pi_1(P)$.
\end{example}

\begin{proposition}[a weaker $P_k$-Cartan Property]\label{PkCartan}
    Let $\gamma_n$ be a sequence in $\Gamma$ and $\gamma_n\rightarrow x\in \Pi_1(P)$. If we can find constants $\lambda\geqslant 1, \epsilon, b\geqslant 0$ such that  $\gamma_n\in \Gamma^+_{P,(\lambda,\epsilon,b)}$ for all $n\in \mathbb{N}$, then $$\xi^k(x)=\lim\limits_{n\rightarrow +\infty} U_k(\rho(\gamma_n)).$$
\end{proposition}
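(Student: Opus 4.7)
The plan is to rephrase the convergence $U_k(\rho(\gamma_n))\to\xi^k(x)$ as a two-step approximation: first use the uniform convergence in Proposition \ref{uniformconvergenceoflimitmaps} to replace $U_k(\rho(\gamma_n))$ with $\xi^k(x_n)$ for the endpoint $x_n$ of the quasi-geodesic witnessing $\gamma_n\in\Gamma^+_{P,(\lambda,\epsilon,b)}$, and then invoke continuity of $\xi^k$ after showing $x_n\to x$.

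First I will fix, for each $n$, a quasi-geodesic $l_n \in Q_{P,(\lambda,\epsilon,b)}$ and $t_n \geq 0$ with $l_n(t_n) = \gamma_n$, and set $x_n = l_n(+\infty)\in Q^{+\infty}_{P,(\lambda,\epsilon,b)}$. Since $\gamma_n\to x\in\partial\Gamma$ we have $|\gamma_n|\to\infty$, so the quasi-geodesic bound forces $t_n\to\infty$. The uniform Cauchy estimate that drives the proof of Proposition \ref{uniformconvergenceoflimitmaps}---which ultimately rests on the exponential decay of $\sigma_{k+1}(\rho(\eta))/\sigma_k(\rho(\eta))$ in $|\eta|$ uniformly on $\Gamma^+_{P,(\lambda,\epsilon,b)}$---then yields
$$d\bigl(U_k(\rho(\gamma_n)),\xi^k(x_n)\bigr)\longrightarrow 0.$$

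Second, I will show $x_n\to x$ in $\partial\Gamma$. By the Fellow Traveler Property (Proposition \ref{FTP}), the segment $l_n|_{[0,t_n]}$, which runs from $l_n(0)$ (within $b$ of $\id$) to $\gamma_n$, lies at uniformly bounded Hausdorff distance from the geodesic segment $[\id,\gamma_n]$. For any subsequence, a diagonal compactness argument in the locally finite Cayley graph extracts a sub-subsequence of $\{l_n\}$ converging uniformly on compacta to a $(\lambda,\epsilon)$-quasi-geodesic ray $l_\infty$, and then $l_\infty(+\infty)$ is the corresponding subsequential limit of $\{x_n\}$. The Fellow Traveler comparison, combined with the fact that the $[\id,\gamma_{n_k}]$ subconverge to a geodesic ray from $\id$ to $x$, forces $l_\infty$ to lie at finite Hausdorff distance from this limit ray, so $l_\infty(+\infty)=x$. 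As every subsequential limit of $\{x_n\}$ equals $x$, we conclude $x_n\to x$.

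Finally, by Lemma \ref{preunionconverge}, $x$ lies in $Q^{+\infty}_{P,(1,0,b')}$ for some $b'\geq 0$, so $x$ and all $x_n$ sit inside $Q^{+\infty}_{P,(\lambda,\epsilon,\max\{b,b'\})}$, on which $\xi^k$ is continuous by Proposition \ref{ctspiP}; hence $\xi^k(x_n)\to\xi^k(x)$, and the triangle inequality closes the argument. I expect the geometric step $x_n\to x$ to be the principal obstacle: the $l_n$ need not pass through $\id$, are only quasi-geodesic, and their forward endpoints $x_n$ need not coincide with $x$ at any finite stage, so the convergence must be extracted indirectly through Fellow Traveler together with a compactness extraction rather than a direct estimate.
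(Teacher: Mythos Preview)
Your proposal is correct and follows essentially the same approach as the paper's proof: both split $d(\xi^k(x),U_k(\rho(\gamma_n)))$ via the triangle inequality through $\xi^k(l_n(+\infty))$, handle one piece by the uniform convergence of Proposition~\ref{uniformconvergenceoflimitmaps} and the other by the continuity of Proposition~\ref{ctspiP}, after enlarging $b$ so that $x$ lies in the relevant $Q^{+\infty}$ set. You supply more detail on the step $l_n(+\infty)\to x$ (via Fellow Traveler and compactness extraction), which the paper simply asserts in one clause; otherwise the arguments coincide.
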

\begin{proof}
   Each $\gamma_n$ lies on a $(\lambda,\epsilon)$-quasi-geodesic $l_n\in Q_{P,(\lambda,\epsilon,b)}$ with $\gamma_n=l_n(t_n)$, where $t_n>0$ and $t_n\rightarrow +\infty$ as $n\rightarrow +\infty$, so $l_n(+\infty)\rightarrow x$ as $n\rightarrow +\infty$. We may also assume $x\in Q^{+\infty}_{P,(\lambda,\epsilon,b)}$ by enlarging the constant $b$. Since $$d(\xi^k(x), U_k(\rho(\gamma_n)))\leqslant d(\xi^k(x),\xi^k(l_n(+\infty)))+d(\xi^k(l_n(+\infty)), U_k(\rho(\gamma_n))),$$ $d(\xi^k(x),\xi^k(l_n(+\infty)))$ converges to $0$ as $n\rightarrow +\infty$ by the continuity from Proposition \ref{ctspiP} of $\xi^k$, and $d(\xi^k(l_n(+\infty)), U_k(\rho(\gamma_n)))=d(\xi^k(l_n(+\infty)), U_k(\rho(l_n(t_n))))$ converges to $0$ as $n\rightarrow +\infty$ by the uniform property in Proposition \ref{uniformconvergenceoflimitmaps}, then $$\xi^k(x)=\lim\limits_{n\rightarrow +\infty} U_k(\rho(\gamma_n)).$$
\end{proof}

\begin{remark}
    The condition $\gamma_n\in \Gamma^+_{P,(\lambda,\epsilon,b)}$ for all $n\in \mathbb{N}$ allows us to use the uniform convergence property indicated in Proposition \ref{uniformconvergenceoflimitmaps}. In Example \ref{exampledisctslimitmap}, although we have $a^mb\cdot a^+\rightarrow a^+$ as $m\rightarrow \infty$, we cannot find a uniform $Q^{+\infty}_{P,(\lambda,\epsilon,b)}$ that contains $a^mb\cdot a^+$ for all $m\in \mathbb{N}$.
\end{remark}

Together with the following lemma, we will be ready to give the proof of Theorem \ref{1to4mainthm1}.

\begin{lemma}[\cite{CZZ} Lemma 2.2]\label{lemmaczz}
    Let $V_0\in \mathrm{Gr}_k(\mathbb{R}^d)$, $W_0\in \mathrm{Gr}_{d-k}(\mathbb{R}^d)$ and $\{A_n\}$ a sequence in $\mathrm{GL}(d,\mathbb{R})$. Then the following are equivalent.\\
    (1). $A_n\cdot V \rightarrow V_0$ as $n\rightarrow +\infty$ for any $V\in \mathrm{Gr}_k(\mathbb{R}^d)$ that transverse to $W_0$ with the convergence uniform on any compact subset.\\
    (2). $\lim\limits_{n\rightarrow +\infty} \dfrac{\sigma_{k+1}(A_n)}{\sigma_k(A_n)}=0$, $U_k(A_n)\rightarrow V_0$ and $U_{d-k}(A_n^{-1})\rightarrow W_0$ as $n\rightarrow +\infty$.
\end{lemma}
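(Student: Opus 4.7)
The plan is to use the singular value decomposition $A_n = K_n D_n L_n^T$, with $K_n, L_n \in O(d)$ and $D_n = \mathrm{diag}(\sigma_1(A_n), \ldots, \sigma_d(A_n))$, so that $U_k(A_n) = K_n \cdot \Span(e_1, \ldots, e_k)$ and $S_{d-k}(A_n) = U_{d-k}(A_n^{-1}) = L_n \cdot \Span(e_{k+1}, \ldots, e_d)$.

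For $(2)\Rightarrow(1)$, fix a compact family $\mathcal K$ of $k$-planes transverse to $W_0$. Since $S_{d-k}(A_n) \to W_0$, for large $n$ each $V \in \mathcal K$ is uniformly transverse to $S_{d-k}(A_n)$, so each unit $v \in V$ decomposes orthogonally as $v = v^+ + v^-$ with $v^+ \in A_n^{-1} U_k(A_n) = S_{d-k}(A_n)^\perp$ and $v^- \in S_{d-k}(A_n)$, and $\|v^+\|$ is bounded below while $\|v^-\|$ is bounded above, uniformly on $\mathcal K$. Then $A_n v^+ \in U_k(A_n)$ with $\|A_n v^+\| \ge \sigma_k(A_n) \|v^+\|$, while $\|A_n v^-\| \le \sigma_{k+1}(A_n) \|v^-\|$; therefore the angle from $A_n v$ to $U_k(A_n)$ is at most a uniform constant times $\sigma_{k+1}(A_n)/\sigma_k(A_n) \to 0$, and combined with $U_k(A_n) \to V_0$ this yields uniform convergence $A_n V \to V_0$ on $\mathcal K$.

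For $(1)\Rightarrow(2)$, I argue by subsequences: it suffices to show that every subsequence admits a further subsequence along which the three claims of $(2)$ hold with limits $V_0, W_0, 0$. By compactness of $O(d)$, the Grassmannians, and $[0,1]$, pass to a sub-subsequence with $K_n \to K$, $L_n \to L$, every ratio $\sigma_j(A_n)/\sigma_i(A_n)$ convergent, and $\sigma_{k+1}(A_n)/\sigma_k(A_n) \to \alpha \in [0,1]$; set $V_0' = K \cdot \Span(e_1, \ldots, e_k)$ and $W_0' = L \cdot \Span(e_{k+1}, \ldots, e_d)$. For any $V$ transverse to both $W_0$ and $W_0'$, write $L_n^T V$ as the graph of a linear map $\phi_n$ from $\Span(e_1, \ldots, e_k)$ to $\Span(e_{k+1}, \ldots, e_d)$ (well-defined for large $n$). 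A direct computation shows $D_n L_n^T V$ is the graph of the matrix with $(i, j)$-entry $(\sigma_{k+i}(A_n)/\sigma_j(A_n))(\phi_n)_{ij}$, converging to $m_{ij} \phi_{ij}$ where $m_{ij} = \lim \sigma_{k+i}(A_n)/\sigma_j(A_n) \in [0, \alpha]$ with $m_{1,k} = \alpha$. If $\alpha > 0$, varying $V$ (hence $\phi$) produces distinct limits for $A_n V$, contradicting $(1)$. Hence $\alpha = 0$, and then the already-proved direction gives $A_n V \to V_0'$ for such $V$, so comparison with $(1)$ forces $V_0' = V_0$.

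To finish, suppose $W_0' \ne W_0$ and pick $V_*$ transverse to $W_0$ with $V_* \cap W_0'$ of dimension $j \ge 1$, so $L^T V_* \cap \Span(e_{k+1}, \ldots, e_d)$ is $j$-dimensional. Choose a basis of $L^T V_*$ consisting of $k - j$ vectors with nonzero projection to $\Span(e_1, \ldots, e_k)$ together with $j$ vectors in $\Span(e_{k+1}, \ldots, e_d)$. After a further subsequence refining the block structure of the singular values (so that ratios within a block converge in $(0,1]$ and between blocks tend to $0$), the $D_n$-images of these two groups of basis vectors, rescaled respectively by $\sigma_k(A_n)^{-1}$ and $\sigma_{k+1}(A_n)^{-1}$, converge to a linearly independent family spanning a $k$-plane which splits as a $(k-j)$-plane inside $\Span(e_1, \ldots, e_k)$ together with a nonzero $j$-plane inside $\Span(e_{k+1}, \ldots, e_d)$, hence is not equal to $K^{-1} V_0 = \Span(e_1, \ldots, e_k)$. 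Therefore $A_n V_* \not\to V_0$, contradicting $(1)$. The main obstacle in this final step is to arrange the two-scale rescaling via the block refinement so that the rescaled basis vectors remain independent in the limit, which is what makes the limit genuinely describe a $k$-plane in $\Gr_k(\mathbb{R}^d)$ different from $V_0$.
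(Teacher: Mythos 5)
The direction $(2)\Rightarrow(1)$ is correct, and the first two stages of your $(1)\Rightarrow(2)$ argument (passing to a sub-subsequence, forcing $\alpha=0$ by varying the graph entry, and then deducing $V_0'=V_0$) are sound. The gap is in the final stage, where you try to show $W_0'=W_0$. You pick $V_*$ transverse to $W_0$ with $\dim(V_*\cap W_0')=j\geqslant 1$ and claim $A_nV_*\not\to V_0$; but you analyze $D_n$ applied to a basis of the \emph{limit} $L^{T}V_*$, whereas the actual sequence is $K_nD_nL_n^{T}V_*$, and the deviation between $L_n^{T}V_*$ and $L^{T}V_*$, although it tends to $0$, gets amplified by $D_n$ and can dominate. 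Concretely, with $d=2$, $k=1$, take $A_n=D_nL_n^{T}$ with $D_n=\mathrm{diag}(n^3,1)$ and $L_n$ the rotation by angle $1/n$, so $K_n=I$, $L_n\to I$, $\sigma_2/\sigma_1\to 0$, $V_0'=\Span(e_1)$, $W_0'=\Span(e_2)$. For $V_*=\Span(e_2)$ (transverse to any $W_0\ne\Span(e_2)$ and meeting $W_0'$), one computes $A_nV_*=\Span\bigl(-n^3\sin(1/n)\,e_1+\cos(1/n)\,e_2\bigr)\to\Span(e_1)=V_0'$, so the contradiction you seek from pointwise convergence at $V_*$ simply does not arise. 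The ``block refinement'' of singular-value ratios cannot repair this, because the parameter causing the trouble is the rate at which $L_n\to L$, which is a priori independent of the singular-value gaps.

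What actually fails in this example is the \emph{uniformity} required in $(1)$: taking $V_n=\Span\bigl(-\tfrac1n e_1+e_2\bigr)$, which lies in a fixed compact set of lines transverse to $W_0$ and converges to $V_*$, a Taylor expansion shows $A_nV_n$ converges to a line different from $V_0'$. So $(1)\Rightarrow(W_0'=W_0)$ genuinely requires using the uniform convergence on compact transversal families, by testing it against a sequence $V_n$ approaching $S_{d-k}(A_n)$ at a rate that defeats the gap $\sigma_{k+1}/\sigma_k$; your argument inspects only the single plane $V_*$ and therefore cannot detect the obstruction. (For what it is worth, the paper does not prove this lemma itself but cites \cite{CZZ}, where this direction is handled by reducing to rank one via exterior powers; that reduction also makes the use of uniformity cleaner.)
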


\begin{proof}[Proof of Theorem \ref{1to4mainthm1}]
    $\xi^k$ is $\rho$-equivariant on $\Pi_1(P)$ and $\xi_{d-k}$ is $\rho$-equivariant on $\Pi_2(P)$ by the definition of limit maps and Lemma \ref{estsingularvalue} (4).\\

    We show that $(\xi^k,\xi_{d-k}):P\rightarrow \mathrm{Gr}_k(\mathbb{R}^d)\times \mathrm{Gr}_{d-k}(\mathbb{R}^d)$ is continuous on $P$. Let $(x_i, y_i)$ be a sequence in $P$ and $(x,y)\in P$ such that $(x_i,y_i)\rightarrow (x,y)$ as $i\rightarrow +\infty$. Let $l_i$ be a geodesic in $\Cay(\Gamma, S)$ with $(l_i(+\infty),l_i(-\infty))=(x_i,y_i)$. Up to a subsequence and reparametrization, $l_i$ converges to $l$, a geodesic with $(l(+\infty), l(-\infty))=(x,y)$. Then for $i$ large enough, $l_i\in Q_{P,(1,0,|l(0)|+1)}$. Then $(\xi^k(x_i),\xi_{d-k}(y_i))\rightarrow (\xi^k(x),\xi_{d-k}(y))$ as $i\rightarrow +\infty$  by the continuity of $\xi^k$ on $Q^{+\infty}_{P,(1,0,|l(0)|+1)}$ and the continuity of $\xi_{d-k}$ on $Q^{-\infty}_{P,(1,0,|l(0)|+1)}$.\\

    Then we show that $(\xi^k,\xi_{d-k})$ is transverse on $P$. Let $l$ be a geodesic in $\Cay(\Gamma,S)$ with $(l(+\infty),l(-\infty))=(x,y)$. Since $P$ is $\Gamma$-invariant and $\xi^k$, $\xi_{d-k}$ are $\rho$-equivariant, we may assume $l(0)=\id$ by a bounded reparametrization by a translation and replacing $l$ by $l(0)^{-1}\cdot l$. Then $\xi^k(x)$ and $\xi_{d-k}(y)$ are transverse following from Lemma 2.5 in \cite{BPS}.\\

    By Proposition \ref{PkCartan} and Lemma \ref{lemmaczz}, $(\xi^k,\xi_{d-k})$ is strongly dynamics preserving of index $k$ on $P$.\\
\end{proof}

\subsection{Strongly dynamics preserving limit maps induce dominated splitting}\ \\

We now prove (4)$\Rightarrow$(2) in Theorem \ref{MT1}.

\begin{theorem}\label{sdpimpliesds}
    Let $\rho: \Gamma \rightarrow \mathrm{GL}(d,\mathbb{R})$ be a representation.
    If there exist two $\rho$-equivariant maps $\zeta^k:\Pi_1(P)\rightarrow \mathrm{Gr}_k(\mathbb{R}^d)$ and $\zeta_{d-k}:\Pi_2(P)\rightarrow \mathrm{Gr}_{d-k}(\mathbb{R}^d)$, such that
    $(\zeta^k,\zeta_{d-k}):P\rightarrow \mathrm{Gr}_k(\mathbb{R}^d)\times \mathrm{Gr}_{d-k}(\mathbb{R}^d)$ is continuous, transverse and
    strongly dynamics preserving of index $k$ on $P$,
    then $E_\rho$ admits a $k$-dominated splitting over $S_P$.
\end{theorem}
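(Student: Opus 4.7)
The plan is to construct the candidate splitting directly from the hypothesised limit maps, verify its elementary properties, and then use strongly dynamics preserving together with the Bochi--Gourmelon theorem (Theorem \ref{BGtheorem}) to obtain the exponential domination.

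First I would define $\widetilde{E}^s_{(x,y,s)}:=\zeta^k(x)$ and $\widetilde{E}^u_{(x,y,s)}:=\zeta_{d-k}(y)$ as fibrewise subspaces of $\mathbb{R}^d$ in the trivial bundle $\widetilde{E}|_{\widetilde{S_P}}$. Since they are independent of the flow parameter $s$, they are preserved by the trivial lifted flow $\widetilde{\psi}^t$; since $(\zeta^k,\zeta_{d-k})$ is $\rho$-equivariant and $P$ is $\Gamma$-invariant, they descend to continuous, $\psi^t$-invariant subbundles $E^s,E^u$ of $E_\rho|_{S_P}$ with $E^s\oplus E^u=E_\rho|_{S_P}$ and $\mathrm{rank}(E^s)=k$. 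By Theorem \ref{BGtheorem} applied to $\psi^1$ over the compact space $S_P$, to promote this splitting to a $k$-dominated splitting it suffices to establish a uniform exponential gap $\sigma_{d-k+1}(\psi^n_q)/\sigma_{d-k}(\psi^n_q)\le Ce^{-\lambda n}$, after which the uniqueness formulas $W_x=\lim S_k(\psi^n_x)$ and $U_x=\lim U_{d-k}(\psi^n_{\phi^{-n}(x)})$ (together with the identification $\zeta^k(x)=\lim U_k(\rho(\gamma_n))$ from Proposition \ref{uniformconvergenceoflimitmaps}) force the resulting Bochi--Gourmelon splitting to coincide with $(E^s,E^u)$.

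I would first establish the qualitative version $\sup_{q\in S_P}\sigma_{d-k+1}(\psi^t_q)/\sigma_{d-k}(\psi^t_q)\to 0$ by contradiction. Suppose sequences $q_n\in S_P$ and $t_n\to\infty$ violate this. Lift $q_n$ to $z_n\in K_0\cap\widetilde{S_P}$ and invoke Lemma \ref{equiesti} to find $\gamma_n\in\{\beta\omega\beta^{-1}\ |\ \beta\in B,\ \omega\in\Gamma_P^+\}$ with $\gamma_n^{-1}\widetilde{\phi}^{t_n}(z_n)\in K$; the calculation from the proof of Theorem \ref{mainthm1} shows that $\sigma_{k+1}(\rho(\gamma_n))/\sigma_k(\rho(\gamma_n))$ differs by a bounded multiplicative factor from the singular-value gap of $\psi^{t_n}_{q_n}$, hence is bounded below. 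Passing to subsequences, $\gamma_n\to x$ and $\gamma_n^{-1}\to y$ in $\partial\Gamma$; translating the underlying geodesics through the identity and appealing to closedness and $\Gamma$-invariance of $P$ as in the proof of Lemma \ref{preunionconverge}, I obtain $x\in\Pi_1(P)$ and $y\in\Pi_2(P)$. Strongly dynamics preserving applied to any $V\in\mathrm{Gr}_k(\mathbb{R}^d)$ transverse to $\zeta_{d-k}(y)$ gives $\rho(\gamma_n)V\to\zeta^k(x)$, and Lemma \ref{lemmaczz} forces $\sigma_{k+1}(\rho(\gamma_n))/\sigma_k(\rho(\gamma_n))\to 0$, a contradiction.

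The main obstacle is upgrading this uniform qualitative decay to the exponential bound required by Theorem \ref{BGtheorem}. My plan is to exploit the splitting from the first paragraph by introducing the log-gap cocycle $\alpha(t,q):=\log\|\psi^t_q|_{E^s_q}\|-\log m(\psi^t_q|_{E^u_q})$, which is subadditive in $t$ by $\psi^t$-invariance of $E^s$ and $E^u$. A refinement of the argument above, using parts (2)--(4) of Lemma \ref{estsingularvalue} to compare $\|\psi^t_q|_{E^s_q}\|$ with $\sigma_{d-k+1}(\psi^t_q)$ via the asymptotic approximation $E^s_q=\zeta^k(x)\approx U_k(\rho(\gamma_n))$ (and analogously for the unstable side, where $E^u_q$ is compared with $U_{d-k}(\psi^t_{\phi^{-t}(q)})$), would upgrade the qualitative decay to uniform $\sup_{q\in S_P}\alpha(T,q)\le -1$ for some $T>0$. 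Fekete's subadditive lemma and compactness of $[0,T]\times S_P$ then yield $\alpha(t,q)\le -\lambda t+C'$ uniformly in $q$, which implies the required exponential decay of the singular-value gap and hence of $\psi^t$ along the splitting, completing the proof that $E_\rho|_{S_P}=E^s\oplus E^u$ is a $k$-dominated splitting.
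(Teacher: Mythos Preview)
Your overall architecture is sound (define the splitting via $(\zeta^k,\zeta_{d-k})$, establish a qualitative contraction on it, then iterate by subadditivity), and both your qualitative singular-value gap argument in paragraph~2 and the subadditive upgrade in paragraph~3 are fine.  The invocation of Theorem~\ref{BGtheorem} is unnecessary, however: once you know $\sup_q\alpha(T,q)\le -1$ for some $T$ and iterate, you have the dominated splitting by definition, and there is no need to recover it from the singular-value formulation.

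The genuine gap is in how you propose to pass from the singular-value gap to $\sup_q\alpha(T,q)\le -1$.  You write that Lemma~\ref{estsingularvalue}~(2)--(4) together with the approximation $E^s_q=\zeta^k(x)\approx U_k(\rho(\gamma_n))$ will do this, but the inequalities run the wrong way: for any $k$-plane $V$ one has $\Vert A|_V\Vert\ge\sigma_{d-k+1}(A)$ and $m(A|_W)\le\sigma_{d-k}(A)$ for any $(d-k)$-plane $W$, so knowing $\sigma_{d-k+1}/\sigma_{d-k}$ is small gives no upper bound on $\Vert\psi^t|_{E^s}\Vert/m(\psi^t|_{E^u})$.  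If instead you try to exploit the closeness of $\zeta^k(x_n)$ to $S_k(\rho(\gamma_n^{-1}))=U_k(\rho(\gamma_n))$, the orthogonal error (of size $\epsilon_n\to 0$) lands in $S_k(\rho(\gamma_n^{-1}))^\perp$ and gets stretched by $\sigma_1(\rho(\gamma_n^{-1}))$; you would need $\epsilon_n\cdot\sigma_1/\sigma_{d-k}\to 0$, which is nowhere established.

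The paper avoids this problem by proving the ratio $\Vert\psi^{t_n}_{q_n}(v_n)\Vert/\Vert\psi^{t_n}_{q_n}(w_n)\Vert\to 0$ directly, without passing through the singular-value gap at all.  After the same translation to $\rho(\gamma_n^{-1})$, it decomposes $v_n=v_n^1+v_n^2$ with $v_n^1\in U_k(\rho(\gamma_n))$ and $v_n^2\in U_k(\rho(\gamma_n))^\perp$.  The $v_n^1$ piece is easy.  For the $v_n^2$ piece, the key observation is that $\rho(\gamma_n^{-1})v_n^2\in U_{d-k}(\rho(\gamma_n^{-1}))\to\zeta_{d-k}(y')$ while $\rho(\gamma_n^{-1})v_n\in\zeta^k(\gamma_n^{-1}x_n)\to\zeta^k(x')$, and $(x',y')\in P$; if the $v_n^2$ contribution did not vanish, the two limits would intersect nontrivially, contradicting transversality of $(\zeta^k,\zeta_{d-k})$ on $P$.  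This use of transversality and continuity on $P$ (not just the strongly-dynamics-preserving conclusion via Lemma~\ref{lemmaczz}) is the missing ingredient in your sketch.
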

\begin{remark}
    In the theorem,
    since $E_\rho$ admitting a $k$-dominated splitting over $S_P$ implies $\rho$ is $\rho$ is $k$-dominated on $\Gamma^+_P$,
    the pair of limit maps $(\xi^k,\xi_{d-k})$ is defined for $\rho$. By the uniqueness,
    $(\zeta^k,\zeta_{d-k})=(\xi^k,\xi_{d-k})$.
\end{remark}
\begin{proof}[Proof of Theorem\ref{sdpimpliesds}]
    Let $K_0\subset \widetilde{S_P}$ be a compact subset and let $p\in K_0$ be a fixed base point. Let $\tau_p:\Cay(\Gamma, S)\rightarrow \widetilde{U\Gamma}$ be the orbit map, which is a quasi-isometry. Let $K\supset K_0$ be the compact subset given by Lemma \ref{equiesti}, let $\{q_n\}\subset S_P$ be an arbitrary sequence and let $z_n=(x_n,y_n,s_n)\in K$ be a lift of $q_n$ for each $n$. Then $(x_n,y_n)\in P$. Let $v_n\in \zeta^k(x_n)$ and $w_n\in \zeta_{d-k}(y_n)$ be unit vectors for each $n$. Let $t_n\in\mathbb{R}_+$ such that $t_n\rightarrow +\infty$ as $n\rightarrow +\infty$. Recall the two Riemannian metrics on $\widetilde{E}$ given by Notation \ref{twonorm}. Let $C>0$ be a constant such that, $C^{-1}\Vert \cdot\Vert \leqslant \Vert \cdot\Vert_0\leqslant C\Vert \cdot\Vert ,$ on $K$. We claim that $$\lim\limits_{n\rightarrow +\infty} \dfrac{ \Vert \psi_{q_n}^{t_n}(v_n)\Vert_{\phi^{t_n}(q_n)} }{ \Vert \psi_{q_n}^{t_n}(w_n)\Vert_{\phi^{t_n}(q_n)}} = 0.$$

    By Lemma \ref{equiesti}, there exists a finite set $B\subset \Gamma$, such that for each pair $z_n$ and $t_n$, there exists $\gamma_n\in \{\beta\omega\beta^{-1}\ |\ \beta\in B, \omega\in\Gamma_P^+\}$, $\gamma_n^{-1}\widetilde{\phi}^{t_n}(z_n)\in K$. Let $L_n$ be a geodesic in $\Cay(\Gamma,S)$ with endpoints $(\gamma_n^{-1}x_n, \gamma_n^{-1}y_n)$. Then the Hausdorff distance between $\tau_p(L_n)$ and $\{ (\gamma_n^{-1}x_n,\gamma_n^{-1}y_n,t) \ |\ t\in\mathbb{R}\}$ is uniformly bounded for all $n$ by Proposition \ref{FTP}. Since $\tau_p$ is a quasi-isometry and $\gamma_n^{-1}\widetilde{\phi}^{t_n}(z_n)\in K$, the distance between $L_n$ and $\id$ in the Cayley graph is uniformly bounded for all $n$. Hence by enlarging $\lambda,\epsilon$ and $b$, we have $L_n\in  Q_{P,(\lambda,\epsilon,b)}$, then $\gamma_n^{-1}x_n\in Q^{+\infty}_{P,(\lambda,\epsilon,b)}$ and $\gamma_n^{-1}y_n\in Q^{-\infty}_{P,(\lambda,\epsilon,b)}$. Then we have
    \begin{align*}
    \dfrac{\Vert \psi_{q_n}^{t_n}(v_n)\Vert_{\phi^{t_n}(q_n)}}{\Vert \psi_{q_n}^{t_n}(w_n)\Vert_{\phi^{t_n}(q_n)}}
    & = \dfrac{\Vert \widetilde{\psi}_{z_n}^{t_n}(v_n)\Vert_{\widetilde{\phi}^{t_n}(z_n)}}{\Vert \widetilde{\psi}_{z_n}^{t_n}(w_n)\Vert_{\widetilde{\phi}^{t_n}(z_n)}}
     = \dfrac{\Vert \rho(\gamma_n^{-1}) \widetilde{\psi}_{z_n}^{t_n}(v_n)\Vert_{\gamma_n^{-1}\widetilde{\phi}^{t_n}(z_n)}}{\Vert \rho(\gamma_n^{-1})  \widetilde{\psi}_{z_n}^{t_n}(w_n)\Vert_{\gamma_n^{-1}\widetilde{\phi}^{t_n}(z_n)}}\\
    &
    \leqslant C^2\cdot\dfrac{\Vert \rho(\gamma_n^{-1}) \widetilde{\psi}_{z_n}^{t_n}(v_n)\Vert_0}{\Vert \rho(\gamma_n^{-1})  \widetilde{\psi}_{z_n}^{t_n}(w_n)\Vert_0} = C^2\cdot\dfrac{\Vert \rho(\gamma_n^{-1})  v_n\Vert_0}{\Vert \rho(\gamma_n^{-1})  w_n\Vert_0}.
    \end{align*}
    
    To prove the claim, we now need only to show $$\lim\limits_{n\rightarrow +\infty} \dfrac{\Vert \rho(\gamma_n^{-1})  v_n\Vert_0}{\Vert \rho(\gamma_n^{-1})  w_n\Vert_0}= 0.$$

    Up to a subsequence, we assume $z_n\rightarrow z=(x,y,s)$ and $\gamma_n^{-1} \widetilde{\phi}^{t_n}(z_n)\rightarrow z'=(x',y',s')$ as $n\rightarrow +\infty$. Notice that $d(\widetilde{\phi}^{t_n}(z_n),\gamma_n z_n) = d(\gamma_n^{-1}\widetilde{\phi}^{t_n}(z_n),z_n)$ is bounded and $t_n\rightarrow +\infty$ as $n\rightarrow +\infty$. We have $\widetilde{\phi}^{t_n}(z_n) \rightarrow x$ as $n\rightarrow +\infty$ and hence $\gamma_n z_n \rightarrow x$ as $n\rightarrow +\infty$. On the other hand, $\widetilde{\phi}^{-t_n}(\gamma_n^{-1}\widetilde{\phi}^{t_n}(z_n)) = \gamma_n^{-1}z_n\rightarrow y'$ as $n\rightarrow +\infty$ since the flow and $\Gamma$-action commute. Therefore, we have $\gamma_n\rightarrow x$ and $\gamma_n^{-1}\rightarrow y'$ as $n\rightarrow +\infty$.\\

    For all $n$, we decompose $v_n=v_n^1+v_n^2$ with $v_n^1\in U_k(\rho(\gamma_n))$ and $v_n^2\in U_k(\rho(\gamma_n))^\perp$. Then $$\Vert \rho(\gamma_n^{-1})  v_n\Vert^2_0 =\Vert \rho(\gamma_n^{-1})  v_n^1\Vert^2_0 +\Vert \rho(\gamma_n^{-1})  v_n^2\Vert^2_0 ,$$ so it is sufficient to prove  $$\limsup\limits_{n\rightarrow +\infty} \dfrac{\Vert \rho(\gamma_n^{-1})  v_n^1\Vert_0}{\Vert \rho(\gamma_n^{-1})  w_n\Vert_0}=0 \ \text{and} \ \limsup\limits_{n\rightarrow +\infty} \dfrac{\Vert \rho(\gamma_n^{-1})  v_n^2\Vert_0}{\Vert \rho(\gamma_n^{-1})  w_n\Vert_0}=0.$$ Since $v_n^1\in U_k(\rho(\gamma_n))$, we have $$\limsup\limits_{n\rightarrow +\infty} \dfrac{\Vert \rho(\gamma_n^{-1})v_n^1\Vert_0}{\sigma_{d-k+1}(\rho(\gamma_n))^{-1}} \leqslant \Vert v_n^1\Vert_0 \leqslant \Vert v_n \Vert_0 \leqslant C.$$ Since $\gamma_n\in \Gamma^+_{P,(\lambda,\epsilon,b)}$, by the strongly dynamics preserving property and Lemma \ref{lemmaczz}, we have $$\lim\limits_{n\rightarrow +\infty} U_k(\rho(\gamma_n))=\zeta^k(x)\ \text{and}\ \lim\limits_{n\rightarrow +\infty} U_{d-k}(\rho(\gamma_n^{-1}))=\zeta_{d-k}(y').$$ We then have $$\lim\limits_{n\rightarrow +\infty} \angle( U_{d-k}(\rho(\gamma_n^{-1})),\rho(\gamma_n^{-1})w_n)=0$$ because $\rho(\gamma_n)^{-1}w_n\in \zeta_{d-k}(\gamma_n^{-1}y_n)$ and $\zeta_{d-k}$ is continuous on $Q^{-\infty}_{P,(\lambda,\epsilon,b)}$. Then we have $$\liminf\limits_{n\rightarrow +\infty} \dfrac{\Vert \rho(\gamma_n^{-1})w_n\Vert_0}{\sigma_{d-k}(\rho(\gamma_n)^{-1})} \geqslant \liminf\limits_{n\rightarrow +\infty}\Vert w_n\Vert_0 \geqslant \dfrac{1}{C}.$$ Hence we get $$\limsup\limits_{n\rightarrow +\infty} \dfrac{\Vert \rho(\gamma_n^{-1})  v_n^1\Vert_0}{\Vert \rho(\gamma_n^{-1})  w_n\Vert_0} \cdot \dfrac{\sigma_{d-k}(\rho(\gamma_n^{-1}))}{\sigma_{d-k+1}(\rho(\gamma_n^{-1}))}\leqslant C^2.$$\\

    Since $(\zeta^k,\zeta_{d-k})$ is strongly dynamics preserving of index $k$ on $P$ and by Lemma \ref{lemmaczz}, we have $$\lim\limits_{n\rightarrow +\infty} \dfrac{\sigma_{k+1}(\rho(\gamma_n))}{\sigma_{k}(\rho(\gamma_n))} = 0$$ and hence $$\lim\limits_{n\rightarrow +\infty} \dfrac{\sigma_{d-k+1}(\rho(\gamma_n^{-1}))}{\sigma_{d-k}(\rho(\gamma_n^{-1}))} = 0.$$ Then $$\limsup\limits_{n\rightarrow +\infty} \dfrac{\Vert \rho(\gamma_n^{-1})  v_n^1\Vert_0}{\Vert \rho(\gamma_n^{-1})  w_n\Vert_0}=0.$$

    Next, we show that $$\limsup\limits_{n\rightarrow +\infty} \dfrac{\Vert \rho(\gamma_n^{-1})  v_n^2\Vert_0}{\Vert \rho(\gamma_n^{-1})  w_n\Vert_0}=0.$$ Suppose this is not the case, then up to a subsequence, $$\limsup\limits_{n\rightarrow +\infty} \dfrac{\Vert \rho(\gamma_n^{-1})  v_n^2\Vert_0}{\Vert \rho(\gamma_n^{-1})  v_n^1\Vert_0}=+\infty.$$ Then the two sequences of unit vectors $\rho(\gamma_n^{-1})  v_n^2 / \Vert \rho(\gamma_n^{-1})  v_n^2\Vert_0$ and $\rho(\gamma_n^{-1})  v_n / \Vert \rho(\gamma_n^{-1})  v_n\Vert_0$ will converge to the same unit vector $u$.\\

    For the first sequence, since $\rho(\gamma_n^{-1})  v_n^2\in \rho(\gamma_n^{-1})  U_k(\rho(\gamma_n))^\perp =U_{d-k}(\rho(\gamma_n^{-1}))$ and $\lim\limits_{n\rightarrow +\infty} U_{d-k}(\rho(\gamma_n^{-1})) =\zeta_{d-k}(y')$. Hence $u\in \zeta_{d-k}(y')$.\\

    For the second sequence, since $\rho(\gamma_n^{-1})  v_n\in \zeta^k(\gamma_n^{-1}x_n)$ as $\zeta^k$ is $\rho$-equivariant, $u\in \lim\limits_{n\rightarrow +\infty} \zeta^k(\gamma_n^{-1}x_n)
    =\zeta^k(x')$ because $(\zeta^k,\zeta_{d-k})$ is continuous on $P$ and $(\gamma^{-1}x_n,\gamma^{-1}y_n)\rightarrow (x',y')$ as $n\rightarrow +\infty$. However, we have $\zeta^k(x')$ and $\zeta_{d-k}(y')$ are transverse since $(x',y')\in P$, which gives a contradiction. Therefore, we have $$\lim\limits_{n\rightarrow +\infty}\dfrac{\Vert \psi_{z_n}^{t_n}(v_n)\Vert_{\phi^{t_n}(z_n)}}{\Vert \psi_{z_n}^{t_n}(w_n)\Vert_{\phi^{t_n}(z_n)}} \leqslant\lim\limits_{n\rightarrow +\infty} \dfrac{\Vert \rho(\gamma_n^{-1})  v_n\Vert_0}{\Vert \rho(\gamma_n^{-1})  w_n\Vert_0}=0.$$ The claim is proved.\\

    Since $(\zeta^k,\zeta_{d-k})$ is continuous, $\rho$-equivariant and transverse on $P$, it defines a splitting of $E|_{S_P}$. Denote it by
    $$E|_{S_P}= E^k\oplus E_{d-k}.$$
    By the compactness of $K$ and the claim, there exists a constant $T>0$, such that $$\dfrac{\Vert \psi_{z}^{t}(v)\Vert_{\phi^{t}(q)}}{\Vert \psi_{z}^{t}(w)\Vert_{\phi^{t}(q)}}= \dfrac{\Vert \widetilde{\psi}_z^t(v)\Vert_{\widetilde{\phi}^{t}(z)}}{\Vert \widetilde{\psi}_{z}^{t}(w)\Vert_{\widetilde{\phi}^{t}(z)}} \leqslant \dfrac{1}{2}\cdot \dfrac{\Vert v\Vert_{\widetilde{\phi}^{t}(z)}}{\Vert w\Vert_{\widetilde{\phi}^{t}(z)}} = \dfrac{1}{2}\cdot \dfrac{\Vert v\Vert_{q}}{\Vert w\Vert_{q}}$$ for any $t>T$, $q\in S_P$ with a lift $z=(x,y,s)\in K$, any $v\in E^k_{z}$ with a lift $v\in \zeta^k(x)$ and any $w\in E_{d-k,z}$ with a lift $w\in \zeta_{d-k}(y)$.\\

    Thus, $$\dfrac{\Vert \psi_{z}^{t}(v)\Vert_{\phi^{t}(z)}}{\Vert \psi_{z}^{t}(w)\Vert_{\phi^{t}(z)}}\leqslant \left(\dfrac{1}{2}\right)^{\frac{t}{T}-1}\cdot \dfrac{\Vert v\Vert_{z}}{\Vert w\Vert_{z}}$$ for any $t\in \mathbb{R}_+$, $v\in E^k_{z}$ and $w\in E_{d-k,z}$. Then $E|_{S_P}= E^k\oplus E_{d-k}$ is a $k$-dominated splitting.\\
\end{proof}\ \\

\section{Hölder continuity of the limit maps}\label{holdercts}
In this section, we show that the limit maps defined by Proposition \ref{uniformconvergenceoflimitmaps} are Hölder continuous in a weaker sense through a purely algebraic way, i.e., Theorem \ref{MT2} (2). To do this, we need to introduce a coarse projection in $\delta$-hyperbolic spaces and several operations on quasi-geodesics.\\

\subsection{More about $\delta$-hyperbolic spaces}\ \\

Let $(X,d)$ be a $\delta$-hyperbolic metric space with $\delta\geqslant 0$ a constant. We recall the definition and properties of the (visual) metric on the Gromov boundary. Then we introduce a way to define a coarse projection of a point $x\in\partial X$ to a geodesic $l:\mathbb{R}\rightarrow X$ with $x\ne l(+\infty),l(-\infty)$.\\

Let $p\in X$ be a fixed base point. For any two points $x,y\in X$, the Gromov product of $x$ and $y$ with base point $p$ is defined by $$(x\cdot y)_p=\dfrac{1}{2}(d(x,p)+d(y,p)-d(x,y)).$$ The Gromov product of two points $x,y$ in $X\cup \partial X$ with base point $p$ can be defined by limit process. That is, $$(x\cdot y)_p = sup \liminf\limits_{m,n\rightarrow \infty} (x_m,y_n)_p$$ where the supremum is taken from all sequences $\{x_n\}$ and $\{y_n\}$ in $X$ with $x_n\rightarrow x$ and $y_n\rightarrow y$ as $n\rightarrow \infty$. It is not hard to see that $(x\cdot y)_p\geqslant 0$ for any $x,y\in X\cup \partial X$ and $(x\cdot y)_p=\infty$ if and only if $x=y\in\partial X$.

\begin{lemma}[\cite{BH} Remarks 3.17 (5)]\label{coarselyproj}
    Let $x,y\in\partial X$ and $\{x_n\}$ and $\{y_n\}$ are two sequences in $X$ with $x_n\rightarrow x$ and $y_n\rightarrow y$ as $n\rightarrow \infty$. Then $$ (x\cdot y)_p-2\delta\leqslant \liminf\limits_{m,n\rightarrow \infty} (x_m,y_n)_p \leqslant (x\cdot y)_p.$$
\end{lemma}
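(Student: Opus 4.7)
The upper bound is immediate from the definition: any particular choice of approximating sequences $\{x_m\}$ and $\{y_n\}$ gives a value of $\liminf_{m,n\to\infty}(x_m\cdot y_n)_p$ that is, by definition of the supremum, at most $(x\cdot y)_p$. So the real content is the lower bound, and the $2\delta$ loss reflects exactly two applications of the thin-triangle inequality.

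The plan is to use the standard four-point characterization of $\delta$-hyperbolicity, namely the inequality $(a\cdot c)_p \geq \min\{(a\cdot b)_p,(b\cdot c)_p\}-\delta$ for all $a,b,c\in X$. Fix arbitrary sequences $x_m\to x$ and $y_n\to y$ in $X$; we want to bound $\liminf(x_m\cdot y_n)_p$ from below by $(x\cdot y)_p-2\delta$. For each $\varepsilon>0$, pick auxiliary sequences $\{a_m\}$ and $\{b_n\}$ with $a_m\to x$, $b_n\to y$, and
\[
\liminf_{m,n\to\infty}(a_m\cdot b_n)_p \geq (x\cdot y)_p-\varepsilon,
\]
which is possible by the definition of $(x\cdot y)_p$ as a supremum. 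Apply the four-point inequality twice:
\[
(x_m\cdot y_n)_p \geq \min\bigl\{(x_m\cdot a_m)_p,\,(a_m\cdot y_n)_p\bigr\}-\delta,
\]
\[
(a_m\cdot y_n)_p \geq \min\bigl\{(a_m\cdot b_n)_p,\,(b_n\cdot y_n)_p\bigr\}-\delta.
\]
Combining these yields $(x_m\cdot y_n)_p \geq \min\{(x_m\cdot a_m)_p,(a_m\cdot b_n)_p,(b_n\cdot y_n)_p\}-2\delta$.

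The auxiliary step I would then need is the standard fact that if two sequences in $X$ converge to the same boundary point, their Gromov product based at $p$ tends to infinity; applied to the pairs $(x_m,a_m)$ and $(b_n,y_n)$, this shows that the first and third terms of the minimum diverge, so for all sufficiently large $m,n$ the minimum is realized by $(a_m\cdot b_n)_p$. Taking $\liminf$ gives $\liminf(x_m\cdot y_n)_p \geq \liminf(a_m\cdot b_n)_p - 2\delta \geq (x\cdot y)_p - \varepsilon - 2\delta$, and letting $\varepsilon\to 0$ completes the proof.

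The main technical obstacle is establishing the divergence statement $(u_k\cdot v_k)_p\to\infty$ whenever $u_k,v_k\to z\in\partial X$. The paper's topology on $X\cup\partial X$ is defined via uniform convergence on compacta of generalized geodesic rays from $p$, so this needs to be translated into a statement about Gromov products. I would argue that if $\gamma_k^u,\gamma_k^v$ are geodesic rays from $p$ to $u_k,v_k$ subconverging to a common ray from $p$ to $z$, then for any $R>0$ the points $\gamma_k^u(R)$ and $\gamma_k^v(R)$ lie within a bounded distance for $k$ large; slimness of the triangle $p,u_k,v_k$ then forces $(u_k\cdot v_k)_p\geq R - O(\delta)$, and since $R$ is arbitrary the product diverges. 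Everything else is a bookkeeping exercise with the hyperbolic inequality.
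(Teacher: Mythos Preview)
The paper does not supply a proof of this lemma; it is stated with a citation to Bridson--Haefliger, Remarks~3.17(5), and used as a black box. Your argument is the standard one found in that reference: the upper bound is by definition, and the lower bound comes from two applications of the four-point inequality $(a\cdot c)_p \geq \min\{(a\cdot b)_p,(b\cdot c)_p\}-\delta$ together with the fact that sequences converging to the same boundary point have diverging Gromov product. Your sketch of the divergence step via the geodesic-ray description of the topology is also correct and routine. There is nothing to compare on the paper's side, and your proposal is sound.
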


Following \cite{BH}, we define a pseudo-metric on $\partial X$ with a constant $\kappa>0$ by $$d_\kappa (x,y)=inf \{\ \sum_{i=1}^n e^{-\kappa (x_{i-1}\cdot x_i)_p} \ |\ n\in\mathbb{N}, \{x_i\}_i\subset \partial X\  \text{with}\ x=x_0, y=x_n \ \}$$ for any $x,y\in\partial X$.

\begin{proposition}[\cite{BH} Part III, Proposition 3.21]\label{visualmetric}
    If $\kappa \leqslant \dfrac{1}{4\delta}log2$, then $d_\kappa$ is a metric on $\partial X$. Moreover, $$(3-2e^{2\delta\kappa}) e^{-\kappa(x\cdot y)_p} \leqslant d_\kappa (x,y)\leqslant e^{-\kappa (x\cdot y)_p}.$$    
\end{proposition}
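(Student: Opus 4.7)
The plan is to reduce the statement to a chain-lemma argument for a multiplicative quasi-ultrametric. The essential geometric input is the approximate ultrametric inequality for the Gromov product at infinity: for any $x,y,z\in\partial X$,
$$(x\cdot z)_p \;\geqslant\; \min\{(x\cdot y)_p,\,(y\cdot z)_p\} - 2\delta.$$
This descends from the standard Gromov product inequality inside $X$ (which is one of the equivalent formulations of $\delta$-hyperbolicity via slim triangles), together with Lemma \ref{coarselyproj}, which controls the defect introduced when passing to the boundary via approximating sequences.

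Set $\rho(x,y):=e^{-\kappa(x\cdot y)_p}$ and $K:=e^{2\delta\kappa}$. Exponentiating the above inequality gives the multiplicative quasi-ultrametric
$$\rho(x,z)\;\leqslant\;K\,\max\{\rho(x,y),\,\rho(y,z)\}\;\leqslant\;K\bigl(\rho(x,y)+\rho(y,z)\bigr).$$
The hypothesis $\kappa\leqslant\tfrac{1}{4\delta}\log 2$ yields $K\leqslant\sqrt 2$, so in particular $3-2K>0$, which is the quantitative room needed later. The upper bound $d_\kappa(x,y)\leqslant e^{-\kappa(x\cdot y)_p}$ is immediate from the definition by taking the trivial one-step chain $x_0=x$, $x_1=y$.

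For the lower bound, the plan is to prove by induction on the length $n$ of a chain $x=x_0,x_1,\dots,x_n=y$ that
$$\rho(x_0,x_n)\;\leqslant\;\tfrac{1}{3-2K}\sum_{i=1}^{n}\rho(x_{i-1},x_i),$$
following a Frink-type chain-lemma template. Write $\sigma$ for the total sum and select the largest index $m$ with $\sum_{i=1}^{m}\rho(x_{i-1},x_i)\leqslant\sigma/2$; by maximality the tail sum $\sum_{i=m+2}^{n}\rho(x_{i-1},x_i)$ is strictly less than $\sigma/2$. Applying the quasi-ultrametric inequality first at the split point $m+1$ (to separate $\rho(x_0,x_n)$ into the pieces $\rho(x_0,x_{m+1})$ and $\rho(x_{m+1},x_n)$), then applying it once more at $m$ (to further split $\rho(x_0,x_{m+1})$ using the middle edge $\rho(x_m,x_{m+1})\leqslant\sigma$), and invoking the inductive hypothesis on the two sub-chains of length $<n$, yields a bound on $\rho(x_0,x_n)$ by a multiple of $\sigma$; the restriction $K\leqslant\sqrt 2$ is precisely what allows this multiplicative constant to close at $1/(3-2K)$.

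Having proved both bounds, $d_\kappa$ is clearly symmetric and satisfies the triangle inequality (by concatenation of chains), while non-degeneracy follows from the lower bound, since $(x\cdot y)_p<\infty$ whenever $x\neq y\in\partial X$. The main technical obstacle is executing the Frink induction with the exact constant $1/(3-2K)$: keeping track of the arithmetic through the two nested applications of the quasi-ultrametric inequality, and verifying that using the crude bound $\rho(x_m,x_{m+1})\leqslant\sigma$ on the ``middle'' edge does not overshoot the budget afforded by $3-2K$. Everything else is essentially formal.
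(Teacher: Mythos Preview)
The paper does not give its own proof of this proposition; it is quoted verbatim from Bridson--Haefliger \cite[Part~III, Proposition~3.21]{BH} and used as a black box. So there is no ``paper's own proof'' to compare against. Your proposal is in fact the standard argument that appears in Bridson--Haefliger (and earlier in Ghys--de~la~Harpe): pass to the quasi-ultrametric $\rho=e^{-\kappa(\cdot\,\cdot\,)_p}$ via the $2\delta$-inequality for Gromov products at infinity, then run the Frink chain lemma to compare $\rho$ with the infimum over chains.

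One remark on the arithmetic you flag as the obstacle: the cleanest way to close the induction with the exact constant $C=\tfrac{1}{3-2K}$ is to use the \emph{max} form of the quasi-ultrametric twice to obtain
\[
\rho(x_0,x_n)\;\leqslant\;K^2\max\bigl\{\rho(x_0,x_m),\ \rho(x_m,x_{m+1}),\ \rho(x_{m+1},x_n)\bigr\},
\]
and then bound the three terms by $C\sigma/2$, $\sigma$, and $C\sigma/2$ respectively. The requirement $K^2\max\{C/2,1\}\leqslant C$ is then exactly what is satisfied for $C=\tfrac{1}{3-2K}$ and $K\leqslant\sqrt{2}$ (one checks $K^2(3-2K)\leqslant 1$ for $1\leqslant K\leqslant\sqrt 2$, and $K^2/2\leqslant 1$ handles the other case). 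Your description of splitting via the additive form $\rho(x,z)\leqslant K(\rho(x,y)+\rho(y,z))$ is slightly lossier and makes the constant harder to recover; stick with the max form throughout.
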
\ \\
\begin{definition}
    Let $l:\mathbb{R}\rightarrow X$ be a geodesic and $x\in\partial X - \{ l(+\infty),l(-\infty) \}$. We denote the real number $t_l(x)=(l(+\infty)\cdot x)_{l(0)}-(l(-\infty)\cdot x)_{l(0)}$. We define the (coarse) projection to $l$ to be the map 
    \begin{align*}
        p_l: \partial X - \{l(+\infty),l(-\infty)\} & \longrightarrow l(\mathbb{R}) \\ x & \longmapsto p_l(x)=l(t_l(x)).
    \end{align*}
\end{definition}

It is not hard to see that $p_l$ is $\mathrm{Isom}(X)$-equivariant, that is $\gamma\cdot p_l(x)= p_{\gamma\cdot l} (\gamma\cdot x)$.\\

As defined, the projection depends not only on the image of $l$, but also the parametrization of $l$. If we consider the reparametrized geodesic $l'(t)=l(t+k)$ for some $k\in\mathbb{R}$, we can check that $d(p_l(x),p_{l'}(x))\leqslant 4\delta$ by the following proposition. Therefore, $p_l$ is coarsely well-defined when we regard $l$ as only the image of a geodesic.

\begin{proposition}\label{coarselyprojrmk}
    Let $l:\mathbb{R}\rightarrow X$ be a geodesic and $l'(t)=l(t+k)$ for a constant $k\in\mathbb{R}$ and any $t\in\mathbb{R}$. Let $x\in \partial X - \{l(+\infty),l(-\infty)\}$. then $d(p_l(x),p_{l'}(x))\leqslant 4\delta$.
\end{proposition}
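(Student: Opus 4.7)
The plan is to reduce the statement to an inequality on Gromov products at infinity. Since $p_{l'}(x) = l'(t_{l'}(x)) = l(t_{l'}(x)+k)$ and $p_l(x) = l(t_l(x))$, both points lie on the isometric parametrization of $l$, so
\[
d(p_l(x), p_{l'}(x)) \;=\; |t_l(x) - t_{l'}(x) - k|.
\]
Writing $A = l(+\infty)$, $B = l(-\infty)$, $p = l(0)$, $q = l(k)$, $\alpha = (A\cdot x)_p$, $\beta = (B\cdot x)_p$, $\alpha' = (A\cdot x)_q$, $\beta' = (B\cdot x)_q$, this reduces the proposition to showing $\bigl|(\alpha-\beta)-(\alpha'-\beta')-k\bigr| \leqslant 4\delta$.

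I plan to combine an exact identity at the ``finite'' level with Lemma~\ref{coarselyproj} to pass to infinity. Consider the natural approximating sequences $a_n = l(n) \to A$ and $b_n = l(-n) \to B$ along the geodesic $l$, together with any $x_m \to x$ in $X$. Since $a_n, b_n$ lie on $l$, for $n \geqslant |k|$ one has $d(a_n,p) - d(a_n,q) = k$ and $d(b_n,p) - d(b_n,q) = -k$ \emph{exactly}. The elementary basepoint-change formula $(y\cdot z)_p - (y\cdot z)_q = \tfrac12[(d(y,p)-d(y,q))+(d(z,p)-d(z,q))]$ for $y,z \in X$ then gives
\[
(a_n\cdot x_m)_p - (a_n\cdot x_m)_q \;=\; \tfrac12[k+\phi(m)], \qquad (b_n\cdot x_m)_p - (b_n\cdot x_m)_q \;=\; \tfrac12[-k+\phi(m)],
\]
where $\phi(m) := d(x_m,p) - d(x_m,q)$. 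Subtracting cancels the $\phi(m)$-contribution and produces the pointwise identity
\[
[(a_n\cdot x_m)_p - (b_n\cdot x_m)_p] - [(a_n\cdot x_m)_q - (b_n\cdot x_m)_q] \;=\; k,
\]
valid for all $n \geqslant |k|$ and all $m$.

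To finish, I apply Lemma~\ref{coarselyproj} to each of the four Gromov products: it provides $\liminf_{n,m}(a_n\cdot x_m)_p \in [\alpha-2\delta, \alpha]$ and analogous bounds with $\beta, \alpha', \beta'$. The key observation allowing the four $2\delta$-slacks to combine into $4\delta$ rather than $8\delta$ is the monotonicity of $n\mapsto (l(n)\cdot y)_p$ and $n\mapsto(l(n)\cdot y)_q$ for $y\in X$, which follows from the formula $(l(n)\cdot y)_\bullet = \tfrac12(d(l(n),\bullet) + d(y,\bullet) - d(l(n), y))$ together with the triangle inequality $|d(l(n+1),y) - d(l(n),y)| \leqslant 1$. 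This monotonicity upgrades the one-sided $\liminf$ bound into a genuine \emph{one-sided} interval $[\mathrm{target}-2\delta, \mathrm{target}]$ for each of the four $n$-limits, and substituting these one-sided bounds into the pointwise identity above (which has alternating signs on the four terms) makes the four deficits combine to at most $4\delta$. A final passage $m\to\infty$ with $x_m = \gamma_x(m)$ along a geodesic ray from $p$ to $x$ then yields $\bigl|(\alpha-\beta)-(\alpha'-\beta')-k\bigr| \leqslant 4\delta$. The main obstacle I anticipate is precisely this last passage: monotonicity in $m$ is clean for the $p$-basepoint terms (since $\gamma_x$ starts at $p$) but can fail for the $q$-basepoint terms, so one must either exploit the fact that only the \emph{difference} of $m$-limits appears in the identity after cancellation of the $\phi(m)$-terms, or apply the 2D version of Lemma~\ref{coarselyproj} directly to the monotone double sequences.
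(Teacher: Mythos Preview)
Your proposal is correct and follows essentially the same route as the paper: both reduce to the exact finite-level identity
\[
\bigl[(l(n)\cdot x_m)_{l(0)} - (l(-n)\cdot x_m)_{l(0)}\bigr] - \bigl[(l(n)\cdot x_m)_{l(k)} - (l(-n)\cdot x_m)_{l(k)}\bigr] = k
\]
for $n\geqslant|k|$, and then invoke Lemma~\ref{coarselyproj} to pass to the boundary. The paper writes this identity with the boundary point $x$ itself (relying on the formal cancellation of the $d(x,\cdot)$ terms) and simply cites Lemma~\ref{coarselyproj} for the $4\delta$ bound, whereas you unfold the approximating sequences and worry explicitly about why the four $2\delta$ slacks combine to $4\delta$ rather than $8\delta$.

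On that last point, your monotonicity argument is correct but heavier than necessary. A cleaner way to dispatch the constant is to observe that, by the very definition $(A\cdot x)_p=\sup\liminf(a_n\cdot x_m)_p$, one automatically has $\limsup(a_n\cdot x_m)_p\leqslant (A\cdot x)_p$ along any sequences; together with the lower bound from Lemma~\ref{coarselyproj} this pins each of the four Gromov products into a window of width $2\delta$ eventually. The difference $(a_n\cdot x_m)_p-(b_n\cdot x_m)_p$ then eventually lies within $2\delta$ of $\alpha-\beta$, and likewise for the $q$-basepoint difference; the exact identity forces $|(\alpha-\beta)-(\alpha'-\beta')-k|\leqslant 4\delta$ with no separate $m$-limit obstacle.
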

\begin{proof}
    When $n$ is large enough, we have
    \begin{align*}
        (l(n)\cdot x)_{l(0)} - (l(-n)\cdot x)_{l(0)} & = \dfrac{1}{2} \big( d(l(n),l(0)) + d(x,l(0)) - d (l(n), x) \\ & - d(l(-n),l(0)) - d(x,l(0)) + d (l(-n), x) \big) \\
        & = \dfrac{1}{2} \big( d(l(n),l(0))  - d (l(n), x)  - d(l(-n),l(0))  + d (l(-n), x) \big) \\
        & = \dfrac{1}{2} \big( d(l(n),l(k))  - d (l(n), x)  - d(l(-n),l(k))  + d (l(-n), x) +2k \big) \\
        & = (l(n)\cdot x)_{l(k)} - (l(-n)\cdot x)_{l(k)} + k.
    \end{align*}
    Then by Lemma \ref{coarselyproj}, when $n\rightarrow +\infty$, the above equation implies
    $ | t_l(x) - \big( t_{l'}(x)+k\big) | \leqslant 4\delta$.
    Then $$d(p_l(x),p_{l'}(x)) = d( l(t_l(x)), l(t_{l'}(x)+k) ) \leqslant | t_l(x) - \big( t_{l'}(x)+k\big) |
    \leqslant 4\delta.$$
\end{proof}

\begin{lemma}\label{newqgfromftp}
    For any constants $\lambda\geqslant 1$ and $\epsilon\geqslant 0$, there exist constants $\lambda'\geqslant 1$ and $\epsilon'\geqslant 0$, such that for any geodesic
    $l:\mathbb{R}\rightarrow X$  and any $(\lambda,\epsilon)$-quasi-geodesic ray $l':\mathbb{R}_{\geqslant 0}\rightarrow X$ with $l(0)=l'(0)$ and $l(+\infty)=l'(+\infty)$, the curve
    \begin{equation*}
        L(t)=\left\{  
            \begin{aligned} l(t) & & t<0 \\
        l'(t) & & t\geqslant 0
     \end{aligned} \right.
    \end{equation*}
is a $(\lambda',\epsilon')$-quasi-geodesic.
\end{lemma}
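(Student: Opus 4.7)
The plan is to verify the quasi-isometric embedding condition for $L$ case by case on the sign of the two parameters. When $t_1, t_2$ both lie in $[0,+\infty)$, the curve $L$ restricts to $l'$, which is already a $(\lambda,\epsilon)$-quasi-geodesic. When $t_1, t_2$ both lie in $(-\infty, 0)$, $L$ restricts to the geodesic $l$. So the only nontrivial case is $t_1 < 0 \leqslant t_2$, for which I would estimate $d(L(t_1),L(t_2)) = d(l(t_1), l'(t_2))$ from above and below.

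The upper bound is immediate by passing through the common basepoint $l(0) = l'(0)$:
\[
d(l(t_1), l'(t_2)) \leqslant d(l(t_1), l(0)) + d(l'(0), l'(t_2)) \leqslant |t_1| + \lambda t_2 + \epsilon \leqslant \lambda(t_2 - t_1) + \epsilon.
\]

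For the lower bound I would invoke the Fellow Traveler Property (Proposition \ref{FTP}). Both $l|_{[0,+\infty)}$ and $l'$ are $(\lambda,\epsilon)$-quasi-geodesic rays sharing the starting point $l(0) = l'(0)$ and the forward endpoint $l(+\infty) = l'(+\infty)$ at infinity, so Proposition \ref{FTP} produces a constant $R_0 = R_0(\delta, \lambda, \epsilon)$ and some $s_2 \geqslant 0$ with $d(l'(t_2), l(s_2)) \leqslant R_0$. Since $l$ is an actual geodesic, $d(l(0), l(s_2)) = s_2$, and combining this with the quasi-geodesic lower bound $d(l'(0), l'(t_2)) \geqslant \lambda^{-1} t_2 - \epsilon$ via the triangle inequality gives $s_2 \geqslant \lambda^{-1} t_2 - \epsilon - R_0$. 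A further triangle inequality then yields
\[
d(l(t_1), l'(t_2)) \geqslant d(l(t_1), l(s_2)) - R_0 = s_2 + |t_1| - R_0 \geqslant \lambda^{-1}(t_2 - t_1) - (\epsilon + 2R_0).
\]
Combining the two bounds, one may take $\lambda' = \lambda$ and $\epsilon' = \epsilon + 2R_0(\delta, \lambda, \epsilon)$, which depend only on $\lambda$ and $\epsilon$ (and the fixed hyperbolicity constant $\delta$) as required. There is no essential obstacle here: the proof is a direct application of the Fellow Traveler Property, with the key structural point being that arc-length parametrization on the geodesic half supplies the sharp identity $d(l(t_1), l(s_2)) = s_2 + |t_1|$ that controls how far backward one can travel along $l$ before losing quasi-geodesic behavior.
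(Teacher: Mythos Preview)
Your proof is correct and essentially identical to the paper's own argument: both reduce to the mixed-sign case, invoke the Fellow Traveler Property on the two rays $l|_{[0,+\infty)}$ and $l'$ to find a point $l(s_2)$ near $l'(t_2)$, and then use the triangle inequality together with the arc-length parametrization of $l$ to obtain the same constants $\lambda'=\lambda$ and $\epsilon'=\epsilon+2R_0$.
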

\begin{proof}
    Let $t_1,t_2>0$ we estimate $d(l(-t_1),l'(t_2))$. By Proposition \ref{FTP}, there exists a constant $R_0$ which only depends on $\lambda,\epsilon$ and $\delta$, such that the Hausdorff distance between $l([0,+\infty)$ and $l'([0,+\infty)$ is bounded by $R_0$. Then there exists $s\geqslant 0$, such that $d(l(s),l'(t_2))\leqslant R_0$. Hence $$R_0\geqslant d(l(s),l'(t_2)) \geqslant |d(l(s),l(0))-d(l'(t_2),l'(0))|= |s -d(l'(t_2),l'(0))|.$$ Since $\lambda^{-1}t_2-\epsilon\leqslant d(l'(t_2),l'(0))\leqslant \lambda t_2+\epsilon$, we have
    \begin{align*}
        d(l(-t_1),l'(t_2)) & \geqslant d(l(-t_1),l(s))-d(l(s),l'(t_2)) \geqslant t_1+s -R_0 \\  & \geqslant t_1-R_0+\lambda^{-1}t_2-\epsilon-R_0 \geqslant \lambda^{-1}(t_1+t_2)-\epsilon-2R_0,
    \end{align*}
    and $$d(l(-t_1),l'(t_2))\leqslant d(l(-t_1),l(0))+d(l'(0),l'(t_2)) \leqslant t_1+\lambda t_2+\epsilon\leqslant \lambda(t_1+t_2)+\epsilon.$$ Hence $L$ is a $(\lambda,\epsilon+2R_0)$-quasi-geodesic.\\
\end{proof}

Now we focus on the Cayley graphs of hyperbolic groups. Let $\Gamma$ be a hyperbolic group with a finite, symmetric generating set $S$. Then the Cayley graph $\Cay(\Gamma,S)$ is $\delta$-hyperbolic for some constant $\delta\geqslant 0$. Let $d$ denote the word metric. By Proposition \ref{visualmetric}, we fix the base point $\id\in \Cay(\Gamma, S)$ and a small enough constant $\kappa$ such that $d_\kappa$ is a metric on $\partial \Gamma$ and $$\nu e^{-\kappa(x\cdot y)_{\id}} \leqslant d_\kappa (x,y)\leqslant e^{-\kappa (x\cdot y)_{\id}},$$ for any $x,y\in\partial \Gamma$, where $0< \nu =3-2e^{2\delta\kappa} \leqslant 1$.\\

For a subset $M$ of $\Cay(\Gamma, S)$, we denote $V_{r}(M)$ the closed neighborhood of $M$ of radius $r$.

\begin{lemma}\label{gofaraway}
    There exists a constant $C_1>0$ with the following property. Let $l:\mathbb{R}\rightarrow \Cay(\Gamma,S)$ be an arbitrary geodesic with $l(0)=\id$. Let $x\in\partial \Gamma$ be an arbitrary point on the boundary with $d(p_l(x),\id)\leqslant 4\delta$ and a geodesic ray
    $l':\mathbb{R}_{\geqslant 0}\rightarrow \Cay(\Gamma, S)$ with $l'(0)=p_l(x)$ and $l'(+\infty)=x$. Then we have 
    $$l'([C_1,+\infty))\cap V_{2\delta}(l(\mathbb{R}))=\emptyset.$$
\end{lemma}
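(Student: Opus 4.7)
The plan is to work with Gromov products at the basepoint $\id$ and to exploit the ultrametric-type inequality characteristic of $\delta$-hyperbolic spaces: if $l'(s)$ were close to $l$, then each of $(x\cdot l'(s))_\id$ and $(l'(s)\cdot l(+\infty))_\id$ would be of size $s-O(\delta)$, and so the ultrametric inequality would force $(x\cdot l(+\infty))_\id$ to be of size $s-O(\delta)$ as well, contradicting an upper bound that I will derive from the hypothesis $d(p_l(x),\id)\leqslant 4\delta$.

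First I will translate this hypothesis into upper bounds on $(l(\pm\infty)\cdot x)_\id$. Since $l$ is a geodesic through $\id$, a direct computation via Lemma \ref{coarselyproj} gives $(l(+\infty)\cdot l(-\infty))_\id\leqslant 2\delta$. The boundary version of the $\delta$-ultrametric inequality will then force $\min\{(l(+\infty)\cdot x)_\id,(l(-\infty)\cdot x)_\id\}$ to be bounded above by a universal multiple of $\delta$. Combined with $|t_l(x)|\leqslant 4\delta$, which is exactly $|(l(+\infty)\cdot x)_\id-(l(-\infty)\cdot x)_\id|\leqslant 4\delta$, both Gromov products will then be bounded above by $A\delta$ for a universal constant $A$.

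Next I will suppose toward contradiction that $l'(s)\in V_{2\delta}(l(\mathbb{R}))$ for some $s>0$, say $d(l'(s),l(s''))\leqslant 2\delta$. The triangle inequality combined with $d(l'(0),\id)\leqslant 4\delta$ forces $\bigl|\,|s''|-s\,\bigr|\leqslant 6\delta$. Without loss of generality $s''\geqslant 0$; the opposite sign is handled symmetrically using $l(-\infty)$ in place of $l(+\infty)$. Since $l(s'')$ lies on the geodesic ray from $\id$ to $l(+\infty)$, Lemma \ref{coarselyproj} yields $(l(s'')\cdot l(+\infty))_\id\geqslant s''$. Using $1$-Lipschitz continuity of the Gromov product in each variable, together with a change of basepoint from $l'(0)$ to $\id$, I will then extract linear lower bounds $(l'(s)\cdot l(+\infty))_\id\geqslant s-c_1\delta$ and $(l'(s)\cdot x)_\id\geqslant s-c_2\delta$ for explicit universal constants $c_1,c_2$. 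Applying the boundary ultrametric inequality to $x$, $l'(s)$ and $l(+\infty)$ then gives $(x\cdot l(+\infty))_\id\geqslant s-c_3\delta$, and comparing with the upper bound $A\delta$ from the previous paragraph forces $s\leqslant C_1$ for an explicit $C_1$ depending only on $\delta$.

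The main obstacle will be the careful bookkeeping of the additive constants coming from three sources: the $2\delta$-gap between the boundary Gromov product and the $\liminf$ in Lemma \ref{coarselyproj}, the extra $\delta$-loss in the four-point inequality when extended to ideal points, and the change-of-basepoint corrections. All of these losses are linear in $\delta$, so the final constant $C_1$ can be taken to be a universal multiple of $\delta$.
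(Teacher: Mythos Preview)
Your argument is correct and takes a genuinely different route from the paper. The paper argues by contradiction and compactness: assuming a sequence of counterexamples $(l_i,x_i,l'_i,s_i)$ with $s_i\to\infty$, it uses slim triangles to trap $l'_i([0,i])$ in $V_{3\delta}(l_i(\mathbb{R}))$, extracts convergent subsequences $l_i\to l$, $l'_i\to l'$, and obtains a geodesic ray $l'$ entirely contained in a tube around $l$; it then uses the visual metric $d_\kappa$ to show $l'(+\infty)\neq l(\pm\infty)$, giving a contradiction. This produces a non-explicit $C_1$.

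Your approach is direct and quantitative: you bound $(x\cdot l(\pm\infty))_{\id}$ above by a universal multiple of $\delta$ via the four-point inequality and the hypothesis $|t_l(x)|\leqslant 4\delta$, then bound $(x\cdot l(+\infty))_{\id}$ (or $(x\cdot l(-\infty))_{\id}$, according to the sign of $s''$) below by $s-O(\delta)$ whenever $l'(s)\in V_{2\delta}(l(\mathbb{R}))$. Comparing the two bounds yields an explicit $C_1$ that is a universal multiple of $\delta$. This avoids both the compactness argument and the visual metric, and gives strictly more information. One small remark: your invocation of Lemma~\ref{coarselyproj} for $(l(s'')\cdot l(+\infty))_{\id}\geqslant s''$ concerns a mixed interior/boundary product, whereas the lemma is stated for two boundary points; however, the inequality you need in that direction follows immediately from the definition of the extended Gromov product as a supremum of $\liminf$'s, so no difficulty arises.
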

\begin{proof}
    We argue by contradiction. Let $l_i:\mathbb{R}\rightarrow \Cay(\Gamma,S)$ be a sequence of geodesics with $l_i(0)=\id$. Let $x_i$ be a sequence of point on $\partial \Gamma$ with
    $d(p_{l_i}(x),\id)\leqslant 4\delta$ and $l'_i:\mathbb{R}_{\geqslant 0}\rightarrow \Cay(\Gamma, S)$ a sequence of geodesic rays with $l'_i(0)= p_l(x_i)$ and $l'_i(+\infty)= x_i$, such that
    we can find real numbers $s_i\geqslant i$ with $l'_i(s_i)\in V_{2\delta}(l(\mathbb{R}))$. Let $t_i\in\mathbb{R}$ such that $d(l'_i(s_i), l_i(t_i))\leqslant 2\delta$. We may assume $t_i\geqslant t_{l_i}(x_i)$. We consider the geodesic triangle with edges $l_i([t_{l_i}(x_i),t_i])$, $l'_i([0,s_i])$ and a geodesic segment joining $l'_i(s_i)$ and $l(t_i)$, which is $\delta$-slim. Then $l'_i([0,i])\subset l'_i([0,s_i]) \subset V_{3\delta}(l_i(\mathbb{R}))$. Moreover, since $l'_i([0,t])\subset V_{t+4\delta}(\{\id\})$ for any $t\geqslant 0$, we have $l'_i([0,t])\subset V_{t+4\delta}(\{\id\})\cap V_{3\delta}(l_i(\mathbb{R})) \subset V_{3\delta}(l_i[-t-7\delta,t+7\delta]).$ for any $0\leqslant t\leqslant i$.\\

    Since each $l_i$ passes through $\id$, we can find a constant $D_1>0$ such that $d_\kappa (l_i(+\infty),l_i(-\infty))\geqslant D_1$. Then $d(p_{l_i}(x_i),\id)\leqslant 4\delta$ implies that $$|t_{l_i}(x_i)|=|(l_i(+\infty)\cdot x_i)_{\id}-(l_i(-\infty)\cdot x_i)_{\id}|\leqslant 4\delta.$$ Then by Proposition \ref{visualmetric}, we have
    \begin{align*}
        D_1 & \leqslant d_\kappa (l_i(+\infty),l_i(-\infty)) \leqslant  d_\kappa (l_i(+\infty),x_i) + d_\kappa (x_i,l_i(-\infty)) \\
        & \leqslant e^{-\kappa(l_i(+\infty)\cdot x_i)_{\id}}+e^{-\kappa(l_i(-\infty)\cdot x_i)_{\id}} \\
        & \leqslant e^{-\kappa(l_i(+\infty)\cdot x_i)_{\id}}+e^{-\kappa(l_i(+\infty)\cdot x_i)_{\id}+4\kappa\delta} \\
        & =(1+e^{4\kappa\delta})e^{-\kappa(l_i(+\infty)\cdot x_i)_{\id}}\\
        & \leqslant \dfrac{1+e^{4\kappa\delta}}{\nu} d_\kappa (l_i(+\infty),x_i).
    \end{align*}
    For convenience, we denote $D_2= \dfrac{\nu D_1}{1+e^{4\kappa\delta}}$, then $d_\kappa (l_i(+\infty),x_i)\geqslant D_2$. Similarly, $d_\kappa (l_i(-\infty),x_i)\geqslant D_2$.\\

    We pick a subsequence $k_i$ of $\mathbb{N}$ such that $l_i$ converges to a geodesic $l$ and $l_i$ converges to a geodesic ray $l'$ with $l'(+\infty)=x\in \partial\Gamma$. Then $d_\kappa(x,l(+\infty))\geqslant D_2$, $d_\kappa(x,l(-\infty))\geqslant D_2$, hence $x\ne l(+\infty)$ and $x\ne l(-\infty)$.\\

    For any $N>0$, there exists $I>0$, such that if $i\geqslant I$, $V_{3\delta}(l_i[-N-7\delta,N+7\delta])\subset V_{3\delta+1}(l(\mathbb{R}))$ as $l_i$ converges to $l$ uniformly on any compact set. Then we have $l'([0,N])\subset V_{3\delta+2}(l(\mathbb{R}))$ since $l'_i([0,N])$ uniformly converges to $l'([0,N])$. Since $N$ is arbitrary, we have $l'([0,+\infty))\subset V_{3\delta+2}(l(\mathbb{R}))$, which contradicts $x\ne l(+\infty)$ and $x\ne l(-\infty)$.\\
\end{proof}

\begin{proposition}\label{newqgfromproj}
    There exists a constant $C_2>0$ with the following property. Let $l:\mathbb{R}\rightarrow \Cay(\Gamma,S)$ be a geodesic and $x\in\partial \Gamma$ a point on the boundary. Let $l':\mathbb{R}_{\geqslant 0}\rightarrow \Cay(\Gamma, S)$ be a geodesic ray with $l'(0)=p_l(x)$ and $l'(+\infty)=x$. Let $L$ be the curve consisting of the following two parts,
    \begin{itemize}
    \item the geodesic ray $l'$;
    \item the geodesic ray from $p_l(x)$ to $l(+\infty)$ along $l$.
    \end{itemize}
    Then $L$ is a $(1, C_2)$-quasi-geodesic parametrized by arc length.
\end{proposition}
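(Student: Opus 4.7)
The plan is a thin-triangle argument combined with Lemma \ref{gofaraway}, after a $\Gamma$-equivariant normalization. I parametrize $L$ by arc length so that $L(0) = p_l(x)$, $L(t) = l(t_l(x) + t)$ for $t \geq 0$, and $L(t) = l'(-t)$ for $t \leq 0$. The upper bound $d(L(s), L(t)) \leq |s - t|$ is immediate from the triangle inequality, and the lower bound is also trivial when $s$ and $t$ lie on the same piece, since each piece is a geodesic. So the only nontrivial case is $s < 0 < t$, where I need $d(L(s), L(t)) \geq (t - s) - C_2$.

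For the reduction, I exploit that the $(1, C_2)$-quasi-geodesic property is $\Gamma$-invariant: pick a vertex $v \in \Gamma$ on $l$ closest to $p_l(x)$, reparametrize $l$ by a bounded translation so that $l(0) = v$ (which shifts $p_l(x)$ by at most $4\delta$ by Proposition \ref{coarselyprojrmk}), and translate everything by $v^{-1}$. Using $\Gamma$-equivariance of $p_l$, the new data satisfies $l(0) = \id$ and $d(p_l(x), \id) \leq 4\delta + 1/2$. The proof of Lemma \ref{gofaraway} goes through unchanged with $4\delta$ replaced by this larger bound and $2\delta$ replaced by any fixed constant, yielding a constant $C_1 > 0$ (depending only on $\delta$ and the Cayley graph) such that $l'([C_1, +\infty)) \cap V_{5\delta}(l(\mathbb{R})) = \emptyset$.

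For the main estimate, fix $s < 0 < t$ and set $\alpha := (L(s) \cdot L(t))_{p_l(x)} \in [0, \min(|s|, t)]$, so that $d(L(s), L(t)) = |s| + t - 2\alpha$. In the geodesic triangle with vertices $L(s), L(t), p_l(x)$, the side $[p_l(x), L(s)]$ is the subsegment $l'([0, |s|])$ and the side $[p_l(x), L(t)]$ is $l([t_l(x), t_l(x) + t])$, so the two tripod points at distance $\alpha$ from $p_l(x)$ are precisely $l'(\alpha)$ and $l(t_l(x) + \alpha)$; $\delta$-hyperbolicity of the triangle gives $d(l'(\alpha), l(t_l(x) + \alpha)) \leq 4\delta$. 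If $\alpha \geq C_1$, the strengthened divergence estimate forces $d(l'(\alpha), l(\mathbb{R})) > 5\delta > 4\delta$, contradicting the tripod bound. Hence $\alpha < C_1$, and $d(L(s), L(t)) > (t - s) - 2C_1$, so $C_2 := 2C_1$ suffices.

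The main obstacle is the reduction step, where one must carefully track how the projection $p_l(x)$, and hence the definition of the curve $L$ itself, transforms under reparametrization of $l$ and under the $\Gamma$-action, verifying that Lemma \ref{gofaraway} still applies with uniform constants. Once this bookkeeping is done, the conceptual content of the proof collapses to a single thin-triangle inequality: the divergence of $l'$ from $l$ past the coarse projection point prevents the Gromov product at the break point $p_l(x)$ from exceeding a uniform constant, which is exactly the $(1, C_2)$-quasi-geodesic condition.
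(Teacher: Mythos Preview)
Your proof is correct and follows essentially the same strategy as the paper: reduce via a $\Gamma$-translation to the normalized setting of Lemma~\ref{gofaraway}, then use a thin-triangle argument to show that the two legs of $L$ can only overlap (in the Gromov-product sense) for a uniformly bounded amount before $l'$ escapes the tubular neighborhood of $l$. The paper carries this out with the slim-triangle formulation, picking a point on the geodesic segment $[l(t),l'(s)]$ that is $\delta$-close to both legs and then invoking Lemma~\ref{gofaraway} with the $2\delta$-neighborhood; you instead use the tripod/insize formulation to compare the two internal points $l'(\alpha)$ and $l(t_l(x)+\alpha)$ directly and invoke the lemma with a $5\delta$-neighborhood. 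These are equivalent packagings of the same estimate, and your version is arguably cleaner since the Gromov product $\alpha$ is exactly the quantity you need to bound. Your care with the reduction step (the extra $1/2$ from snapping to a vertex, and the observation that the constants in Lemma~\ref{gofaraway} can be loosened) is warranted; the paper's ``up to an isometry'' hides the same bookkeeping.
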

\begin{proof}
    Up to an isometry, we may assume $l(0)=\id$ and $d(p_l(x), \id)=t_l(x)\leqslant 4\delta$ by Proposition \ref{coarselyprojrmk}. Let $t \geqslant t_l(x)$ and $s \geqslant 0$ be two arbitrary parameters. We estimate the distance between $l(t)$ and $l'(s)$.\\

    Let $g:[a,b]\rightarrow \Cay(\Gamma,S)$ be a geodesic segment with $g(a)=l(t)$ and $g(b)=l'(s)$. Since the triangle with vertices $p_l(x)$, $l(t)$ and $l'(s)$ is $\delta$-slim, there exists $c\in[a,b]$, $t_0\in [t_l(x), t]$ and $s_0\in [0,s]$, such that $$d(g(c),l(t_0))\leqslant \delta\ \text{and}\ d(g(c),l'(s_0))\leqslant \delta.$$ Then $d(l(t_0),l'(s_0))\leqslant 2\delta$. By Lemma \ref{gofaraway}, we can find $C_1$ independent to $L$, $x$ and $l'$, such that $l'([C_1,+\infty))\cap V_{2\delta}(l(\mathbb{R}))=\emptyset$. Hence we have $s_0 \leqslant C_1$ and then $t_0\leqslant C_1+2\delta +t_l(x)\leqslant C_1 +6\delta$.\\

    Therefore, we have
    \begin{align*}
        d(l(t),l'(s)) & =d(g(a),g(c))+d(g(b),g(c))=d(l(t),g(c))+d(l'(s),g(c)) \\
        & \geqslant d(l(t),l(t_0))-d(l(t_0),g(c))+d(l'(s),l'(s_0))-d(l'(s_0),g(c))\\
        & \geqslant t-t_0 -\delta + s-s_0 -\delta\\
        & \geqslant t- C_1-6\delta -\delta + s- C_1-\delta\\
        & = t+s-2C_1-8\delta. 
    \end{align*}    
    On the other hand, it is clear that $d(l(t),l'(s)) \leqslant t+s +4\delta$. The difference between the parameters of $l(t)$ and $l'(s)$ on $L$ is $t+s-t_l(x)$. Let $C_2=2C_1+12\delta$. Then we have $L$ is a $(1,C_2)$-quasi-geodesic.\\
\end{proof}\ \\

\subsection{Hölder continuity}\ \\

Now we are ready to prove that $\xi^k$ is Hölder continuous on each $Q^{+\infty}_{P,(\lambda,\epsilon,b)}$.\\

We still assume that $\Gamma$ is a hyperbolic group with a finite, symmetric generating set $S$. The Cayley graph $\Cay(\Gamma,S)$ is $\delta$-hyperbolic for a constant $\delta\geqslant 0$. Let $d$ and $|\cdot|$ denote the word metric on $\Gamma$. Take $\id\in \Cay(\Gamma, S)$ as the base point. By Proposition \ref{visualmetric}, there exist constants $\kappa >0$ and $0<\nu\leqslant 1$ such that $d_\kappa$ is a metric on $\partial \Gamma$ with $$\nu e^{-\kappa(x\cdot y)_{\id}} \leqslant d_\kappa (x,y)\leqslant e^{-\kappa (x\cdot y)_{\id}},$$ for any $x,y\in\partial \Gamma$.

\begin{theorem}
    Let $P$ be a closed, $\Gamma$-invariant subset of $\partial^{(2)} \Gamma$. Let $\rho: \Gamma\rightarrow \mathrm{GL}(d,\mathbb{R})$ be a $k$-Anosov representation over $S_P$. Then for any constant $b\geqslant 0$, the limit map $\xi^k$ is Hölder continuous on $Q^{+\infty}_{P,(1,0,b)}$.
\end{theorem}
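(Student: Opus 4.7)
The plan is to compare, for $x,y \in Q^{+\infty}_{P,(1,0,b)}$ close in the visual metric $d_\kappa$, the two sequences $U_k(\rho(\gamma^x_n))$ and $U_k(\rho(\gamma^y_n))$ along geodesics $l_x,l_y \in Q_{P,(1,0,b)}$ with $l_x(+\infty)=x$ and $l_y(+\infty)=y$, where $\gamma^z_n$ are the $\Gamma$-points along $l_z$ for $z\in\{x,y\}$. The branching parameter will be $N \approx (x\cdot y)_{\id}$, and the Hölder exponent will emerge from the relation $d_\kappa(x,y)\geqslant \nu e^{-\kappa(x\cdot y)_{\id}}$ given by Proposition \ref{visualmetric}.

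First, I would quantify the exponential speed of convergence implicit in Proposition \ref{uniformconvergenceoflimitmaps}. Telescoping $d(U_k(\rho(\gamma^z_n)),U_k(\rho(\gamma^z_{n+1})))$ via Lemma \ref{estsingularvalue}(3) with $A=\rho(\gamma^z_n)$, $B=\rho((\gamma^z_n)^{-1}\gamma^z_{n+1})$: since $(\gamma^z_n)^{-1}\gamma^z_{n+1}\in S$, the factor $\Vert B\Vert\Vert B^{-1}\Vert$ is uniformly bounded, while $k$-domination gives $\sigma_{k+1}(\rho(\gamma^z_n))/\sigma_k(\rho(\gamma^z_n))\leqslant Ce^{-\lambda|\gamma^z_n|}\leqslant Ce^{-\lambda(n-b)}$. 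Summing a geometric series produces constants $C_0,\mu>0$ depending only on $\delta,b$ and the $k$-domination constants such that $d(\xi^k(z),U_k(\rho(\gamma^z_n)))\leqslant C_0 e^{-\mu n}$ for $z\in\{x,y\}$.

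Next, I locate the branching parameter. Fix geodesics $g_x,g_y:\mathbb{R}_{\geqslant 0}\to \Cay(\Gamma,S)$ from $\id$ to $x,y$. The standard hyperbolic estimate together with Lemma \ref{coarselyproj} gives $d(g_x(t),g_y(t))\leqslant 2\delta+O(1)$ for $t\leqslant (x\cdot y)_{\id}$. Since $l_x(0),l_y(0)$ lie in the ball of radius $b$ about $\id$, Proposition \ref{FTP} applied to each asymptotic pair $(g_x,l_x)$, $(g_y,l_y)$ gives an $R=R(\delta,b)$ such that, after a bounded reparametrization, $d(g_x(t),l_x(t))\leqslant R$ and similarly for $y$. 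Setting $N=\lfloor(x\cdot y)_{\id}\rfloor - C$ for a suitable $C=C(\delta,b)$ therefore yields $d(\gamma^x_N,\gamma^y_N)\leqslant D$ for $D=D(\delta,b)$. Then I apply Lemma \ref{estsingularvalue}(3) once more with $A=\rho(\gamma^x_N)$ and $B=\rho((\gamma^x_N)^{-1}\gamma^y_N)$: the element $(\gamma^x_N)^{-1}\gamma^y_N$ lies in the finite set $\{\gamma\in\Gamma:|\gamma|\leqslant D\}$, so $\Vert B\Vert\Vert B^{-1}\Vert\leqslant M$, and $k$-domination gives $d(U_k(\rho(\gamma^x_N)),U_k(\rho(\gamma^y_N)))\leqslant MCe^{-\lambda(N-b)}$.

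Combining by the triangle inequality,
$$d(\xi^k(x),\xi^k(y)) \leqslant 2C_0 e^{-\mu N} + MC e^{-\lambda(N-b)} \leqslant C_1 e^{-\mu' N}$$
for some $\mu'>0$. Using $N \geqslant (x\cdot y)_{\id}-C$ and the lower bound $d_\kappa(x,y)\geqslant \nu e^{-\kappa(x\cdot y)_{\id}}$ from Proposition \ref{visualmetric}, we conclude $d(\xi^k(x),\xi^k(y))\leqslant C_2\, d_\kappa(x,y)^{\mu'/\kappa}$, which is the desired Hölder bound on $Q^{+\infty}_{P,(1,0,b)}$. The main obstacle is the quantitative Step 2: showing cleanly that two geodesics in $Q_{P,(1,0,b)}$ with distinct starting points within $b$ of $\id$ fellow travel at least up to parameter $(x\cdot y)_{\id}-O(\delta,b)$. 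All other steps reduce to careful bookkeeping with the singular value estimates of Lemma \ref{estsingularvalue} and the $k$-domination inequality; the geometric input is concentrated in this fellow-traveling statement, which follows by combining Proposition \ref{FTP} with the standard Gromov-product estimate for $\delta$-hyperbolic spaces.
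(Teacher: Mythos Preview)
Your argument is correct and is in fact the standard route in the Anosov literature, but it differs from the paper's proof. The paper does not use fellow-traveling of $l_x$ and $l_y$ directly; instead it develops a coarse projection $p_{l_1}:\partial\Gamma\to l_1(\mathbb{R})$ (Section~6.1) and builds an auxiliary curve $l_2'$ that \emph{literally coincides} with $l_1$ on the initial segment $[0,t_{l_1}(y)]$, then branches off via a geodesic ray from $p_{l_1}(y)$ to $y$ and is completed backwards along $l_2$ to $l_2(-\infty)$. Propositions~6.6 and~6.8 are proved specifically to certify that $l_2'$ is a $(\lambda,\epsilon)$-quasi-geodesic in $Q_{P,(\lambda,\epsilon,b)}$. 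Because $l_1=l_2'$ on $[0,t_{l_1}(y)]$, one has $U_k(\rho(l_1(n)))=U_k(\rho(l_2'(n)))$ exactly for $n\leqslant t_{l_1}(y)$, so the triangle inequality involves only the two tail terms---your middle term $d(U_k(\rho(\gamma^x_N)),U_k(\rho(\gamma^y_N)))$ never appears.

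What each approach buys: your argument is shorter and avoids the entire coarse-projection apparatus of Section~6.1; the only geometric input you need is the standard Gromov-product fellow-traveling estimate and the fact that unit-speed geodesic rays with nearby basepoints and the same forward endpoint stay boundedly close with aligned parametrizations. The paper's construction, on the other hand, produces an explicit quasi-geodesic in $Q_{P,(\lambda,\epsilon,b)}$ witnessing $y$, which may be of independent use, and reduces the estimate to a clean two-term sum. Both yield the same H\"older exponent $\mu'/\kappa$ after invoking $d_\kappa(x,y)\geqslant\nu e^{-\kappa(x\cdot y)_{\id}}$. One small omission in your sketch: you should note (as the paper does) that when $d_\kappa(x,y)$ is bounded below the H\"older inequality is trivial since $\mathrm{Gr}_k(\mathbb{R}^d)$ has diameter~$1$.
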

\begin{proof}
    There exists a constant $D_1>0$, such that for any $l\in Q_{P,(1,0,b)}$, we have $d_\kappa(l(+\infty),l(-\infty))\geqslant D_1$ since $l$ is a geodesic with $d(l(0),\id)\leqslant b$. Let $x,y\in Q^{+\infty}_{P,(1,0,b)}$ be two points on the boundary with $d_\kappa(x,y)\leqslant D_2$, where $D_2>0$ is a constant which will be given later. Let $l_1,l_2\in Q_{P,(1,0,b)}$ with $l_1(+\infty)=x$ and $l_2(+\infty)=y$. Then $$D_1-D_2\leqslant d_\kappa(l_1(-\infty),y)\leqslant e^{-\kappa (l_1(-\infty)\cdot y)_{\id}}$$ and $$\nu  e^{-\kappa(x\cdot y)_{\id}} \leqslant d_\kappa (x,y)\leqslant D_2.$$ Together with the definition of Gromov product, Proposition \ref{visualmetric}, and $d(l_1(0),\id)\leqslant b$, we have $$(l_1(-\infty)\cdot y)_{l_1(0)}\leqslant (l_1(-\infty)\cdot y)_{\id}+b+2\delta\leqslant \dfrac{1}{\kappa}\log\dfrac{1}{D_1-D_2}+b+2\delta= C^-(D_2)$$ and $$(x\cdot y)_{l_1(0)}\geqslant (x\cdot y)_{\id}-b-2\delta\geqslant \dfrac{1}{\kappa}\log\dfrac{\nu}{D_2}-b-2\delta=C^+(D_2).$$ When $D_2$ goes to $0$, $C^-(D_2)$ is bounded while $C^+(D_2)$ goes to $+\infty$. We choose $D_2$ small enough such that $2C^-(D_2)+1\leqslant C^+(D_2)$. Then $t_{l_1}(y)= (x\cdot y)_{l_1(0)} -(l_1(-\infty)\cdot y)_{l_1(0)}$ is positive.\\

    We now construct a curve $l'_2$ in $\Cay(\Gamma,S)$ that consists of the following parts.
    \begin{itemize}
        \item A geodesic ray from $p_{l_1}(y)$ to $y$;
        \item The geodesic segment from $l_1(0)$ to $p_{l_1}(y)$ along $l_1$;
        \item A geodesic segment from $l_2(0)$ to $l_1(0)$;
        \item The geodesic ray from $l_2(0)$ to $l_2(-\infty)$ along $l_2$.
    \end{itemize}
    Parametrize $l'_2$ by arc length so that $l'_2(0)=l_1(0)$, $l'_2(+\infty)= y$ and $l'_2(-\infty)=l_2(-\infty)$.\\

    Then there exist constants $\lambda\geqslant 1$ and $\epsilon \geqslant 0$, which are independent to $x$ and $y$, such that $l'_2\in Q_{P,(\lambda,\epsilon,b)}$. Actually, the first two parts consist a $(1,C)$-quasi-geodesic ray by Proposition \ref{newqgfromproj}, for some $C\geqslant 0$. Then the first three parts consist a $(1,C+4b)$-quasi-geodesic ray, which has bounded Hausdorff distance (provided by Proposition \ref{FTP}) away from the geodesic ray from $l_2(0)$ to $y$ along $l_2$. Hence by Lemma \ref{newqgfromftp}, $l'_2$ is a $(\lambda,\epsilon)$-quasi-geodesic for some constants $\lambda\geqslant 1$ and $\epsilon\geqslant 0$.\\

    Now we have $l_1,l'_2\in Q_{P,(\lambda,\epsilon,b)}$ parametrized by arc length. Since $\rho$ is $k$-dominated on $\Gamma^+_{P,(\lambda,\epsilon,b)}$, by Observation \ref{observationext}, there exist constants $C_0,c_0 \geqslant 0$ such that $$\dfrac{\sigma_{k+1}(\rho(\gamma))}{\sigma_{k}(\rho(\gamma))}\leqslant C_0e^{-c_0|\gamma|},\ \text{for any}\ \gamma\in \Gamma^+_{P,(\lambda,\epsilon,b)}.$$ By Lemma \ref{estsingularvalue} (3) and Proposition \ref{uniformconvergenceoflimitmaps}, $\xi^k(x)= \lim\limits_{n\rightarrow +\infty} U_k(\rho(l_1(n)))$ where $U_k(\rho(l_1(n)))$ is a Cauchy sequence such that
    \begin{align*}
        & d(U_k(\rho(l_1(n)),U_k(\rho(l_1(n+1)))\\
        \leqslant & \Vert \rho(l_1(n))^{-1}\rho(l_1(n+1))\Vert \cdot\Vert \rho(l_1(n+1))^{-1}\rho(l_1(n))\Vert \cdot\dfrac{\sigma_{k+1}(\rho(l_1(n)))}{\sigma_k(l_1(n))}\\
        \leqslant & C_1 C_0e^{-c_0|l_1(n)|}
    \end{align*}
    where $C_1$ denotes the maximum of $\Vert \rho(s)\Vert \cdot\Vert \rho(s^{-1})\Vert $ for all generators $s\in S$. Then there exist constants $C_2,c_2>0$, that only depend on $C_0, c_0, C_1, \lambda$ and $\epsilon$, such that $$d(\xi^k(x),U_k(\rho(l_1(n))))\leqslant C_2 e^{-c_2|l_1(n)|},$$ and similarly, $$d(\xi^k(y),U_k(\rho(l'_2(n))))\leqslant C_2 e^{-c_2|l'_2(n)|},$$ for any $n\in\mathbb{N}$.\\
    
    Since $l_1=l'_2$ on $[0,t_{l_1}(y)]$, let $n=t_{l_1}(y)$ in the above two inequalities, then we have 
    \begin{align*}
        d(\xi^k(x),\xi^k(y)) & \leqslant d\big(\xi^k(x),U_k(\rho(l_1(t_{l_1}(y))))\big)+d\big(\xi^k(y),U_k(\rho(l'_2(t_{l_1}(y))))\big)\\
        & \leqslant 2C_2 e^{c_2b}\cdot e^{-c_2t_{l_1}(y)}\\
        & \leqslant  2C_2 e^{c_2b}\cdot e^{-c_2 ((x\cdot y)_{l_1(0)} -(l_1(-\infty)\cdot y)_{l_1(0)})}
    \end{align*}
    
    Recall that $(l_1(-\infty)\cdot y)_{l_1(0)}\leqslant C^-(D_2)$ and $(x\cdot y)_{l_1(0)}\geqslant (x\cdot y)_{\id}-b-2\delta$. Then $$  e^{-c_2 ((x\cdot y)_{l_1(0)} -(l_1(-\infty)\cdot y)_{l_1(0)})} \leqslant e^{c_2(C^-(D_2)+b+2\delta)}\cdot e^{-c_2 (x\cdot y)_{\id}}.$$
    
    Since $\nu  e^{-\kappa(x\cdot y)_{\id}} \leqslant d_\kappa (x,y)$, we have $e^{-c_2 (x\cdot y)_{\id}} \leqslant \left(\dfrac{1}{\nu}\right)^{\frac{c_2}{\kappa}} d_\kappa (x,y)^{\frac{c_2}{\kappa}}$. Let $C_3= 2\left(\dfrac{1}{\nu}\right)^{\frac{c_2}{\kappa}}C_2e^{c_2(C^-(D_2)+2b+2\delta)}$, then we have $$d(\xi^k(x),\xi^k(y))\leqslant C_3 d_\kappa (x,y)^{\frac{c_2}{\kappa}}.$$ The above inequality is true for any $x,y\in Q^{+\infty}_{P,(1,0,b)}$ with $d_\kappa(x,y)\leqslant D_2$. Let $C_4>0$ be a constant such that $C_4D_2^{\frac{c_2}{\kappa}}\geqslant 1$. When $d_\kappa(x,y)> D_2$, we have $$ d(\xi^k(x),\xi^k(y))\leqslant 1\leqslant C_4 d_\kappa(x,y)^{\frac{c_2}{\kappa}}.$$ Let $C_5=\max\{C_3,C_4\}$, then $$d(\xi^k(x),\xi^k(y))\leqslant C_5 d_\kappa (x,y)^{\frac{c_2}{\kappa}},$$ for any $x,y\in Q^{+\infty}_{P,(1,0,b)}$, that is, $\xi^k$ is Hölder continuous on $Q^{+\infty}_{P,(1,0,b)}$.\\
\end{proof}

Notice that for any compact subset $P_0$ of $P$, $\inf \{ d_{\kappa}(x,y)\ |\ (x,y)\in P_0\} >0$. Hence there exists a large enough constant $b\geqslant 0$, such that $P_0\subset Q^{+\infty}_{P,(1,0,b)}\times Q^{-\infty}_{P,(1,0,b)}$. By Lemma \ref{unionconverge} and considering $\hat{P}$, we have the following corollary.

\begin{corollary} 
    For any constants $\lambda\geqslant 1$, $\epsilon\geqslant 0$ and $b\geqslant 0$, $\xi^k$ is Hölder continuous on $Q^{+\infty}_{P,(\lambda,\epsilon,b)}$, and $\xi_{d-k}$ is Hölder continuous on $Q^{-\infty}_{P,(\lambda,\epsilon,b)}$. Moreover, $(\xi^k,\xi_{d-k})$ is Hölder continuous on any compact subset of $P$.
\end{corollary}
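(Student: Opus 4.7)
My plan is to deduce this corollary from the preceding theorem by three short reductions: enlarging the parameters from $(1,0,b)$ to $(\lambda,\epsilon,b)$ via an inclusion, handling $\xi_{d-k}$ by duality with $\hat P$, and then bounding the basepoint offset uniformly over a compact subset of $P$.

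For the first claim, I will invoke Lemma \ref{preunionconverge}(2): given $(\lambda,\epsilon,b)$, there exists $b'\geqslant 0$ with
\[
Q^{+\infty}_{P,(\lambda,\epsilon,b)}\ \subset\ \Lambda\bigl(\Gamma^+_{P,(\lambda,\epsilon,b)}\bigr)\ \subset\ Q^{+\infty}_{P,(1,0,b')}.
\]
The preceding theorem applied with $b'$ shows $\xi^k$ is Hölder continuous on $Q^{+\infty}_{P,(1,0,b')}$, and restriction to a subset preserves Hölder continuity with the same exponent. For $\xi_{d-k}$, I use the identifications $\xi_{d-k,P}=\xi^{d-k}_{\hat P}$ and $Q^{-\infty}_{P,(\lambda,\epsilon,b)}=Q^{+\infty}_{\hat P,(\lambda,\epsilon,b)}$ recorded in Subsection 5.1; since $\hat P$ is also closed and $\Gamma$-invariant and $\rho$ is $(d-k)$-dominated on $\Gamma^+_{\hat P}$, the first step applied to $\hat P$ and index $d-k$ finishes this case.

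For the compact-subset statement, let $P_0\subset P$ be compact. Since $P_0$ is disjoint from the diagonal of $\partial\Gamma\times\partial\Gamma$ and $d_\kappa$ is a continuous metric, $c_0:=\inf\{d_\kappa(x,y):(x,y)\in P_0\}>0$. By Proposition \ref{visualmetric}, $(x\cdot y)_{\id}\leqslant\tfrac{1}{\kappa}\log\bigl(1/(\nu c_0)\bigr)$ uniformly on $P_0$, and for any geodesic $l$ with endpoints $(x,y)$ the distance $d(l(0),\id)$ differs from $(x\cdot y)_{\id}$ by at most a constant depending only on $\delta$. Hence there is a uniform $b\geqslant 0$ with $P_0\subset Q^{+\infty}_{P,(1,0,b)}\times Q^{-\infty}_{P,(1,0,b)}$; combining this with Hölder continuity of $\xi^k$ on $Q^{+\infty}_{P,(1,0,b)}$ and of $\xi_{d-k}$ on $Q^{-\infty}_{P,(1,0,b)}$, and using that the topology on $\partial^{(2)}\Gamma$ is the product topology, the pair $(\xi^k,\xi_{d-k})$ is Hölder continuous on $P_0$. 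The only nontrivial verification is this last estimate, which is a standard Gromov-product computation in $\delta$-hyperbolic geometry.
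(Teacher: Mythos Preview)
Your proposal is correct and follows essentially the same approach as the paper: the paper deduces the corollary from the preceding theorem via Lemma~\ref{preunionconverge}(2) (to pass from $(\lambda,\epsilon,b)$ to $(1,0,b')$), the duality $\xi_{d-k,P}=\xi^{d-k}_{\hat P}$, and the observation that $\inf\{d_\kappa(x,y):(x,y)\in P_0\}>0$ on a compact $P_0\subset P$ yields a uniform $b$ with $P_0\subset Q^{+\infty}_{P,(1,0,b)}\times Q^{-\infty}_{P,(1,0,b)}$. One minor wording issue: your phrase ``for any geodesic $l$ with endpoints $(x,y)$ the distance $d(l(0),\id)$ differs from $(x\cdot y)_{\id}$ by at most a constant'' should refer to the distance from $\id$ to the image $l(\mathbb{R})$ (equivalently, to $d(l(0),\id)$ after a suitable reparametrization), not to $d(l(0),\id)$ for an arbitrary parametrization; the conclusion you draw is unaffected.
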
\ \\

\section{Stability of Anosov representations over closed subflows}\label{secstability}
We still assume that $\Gamma$ is a hyperbolic group and $P$ is a closed, $\Gamma$-invariant subset of $\partial^{(2)}\Gamma$. In this section, we show that the set of $k$-Anosov representations over $S_P$ is open in $\mathrm{Hom}(\Gamma,\mathrm{GL}(d,\mathbb{R}))$, i.e., $k$-Anosov representations over $S_P$ are stable.\\

We recall some notations from Section \ref{equivalence1}. For a representation $\rho:\Gamma \rightarrow \mathrm{GL}(d,\mathbb{R})$, $E_\rho=\widetilde{E}/\Gamma=U\Gamma\times_{\rho}\mathbb{R}^d$. We will also denote $\widetilde{E}$ with the $\Gamma$ action induced by $\rho$ as $\widetilde{E}_\rho$. $\Vert  \cdot\Vert  $ is a Riemannian metric on $E_\rho$ that lifts to a $\Gamma$-invariant, $\mathbb{Z}/2\mathbb{Z}$-invariant metric on $\widetilde{E}$, which is still denoted by $\Vert  \cdot\Vert $.

\begin{theorem}\label{opennbhdstab}
    The set of $k$-Anosov representations over $S_P$ is open in $\mathrm{Hom}(\Gamma,\mathrm{GL}(d,\mathbb{R}))$.\\
\end{theorem}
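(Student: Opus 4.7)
The strategy is to use the characterization of being $k$-Anosov over $S_P$ in terms of a $k$-dominated splitting of $E_\rho|_{S_P}$ from Theorem \ref{equivdands} and appeal to the persistence of dominated splittings under small perturbations. Throughout I work with the time-$1$ map $\psi^1_\rho$ and the base homeomorphism $\phi^1$ on the compact space $S_P$, placing us in the setting of Theorem \ref{BGtheorem}.

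Fix $\rho_0$ which is $k$-Anosov over $S_P$. By Theorem \ref{equivdands} and Remark \ref{midsingular}, the bundle $E_{\rho_0}|_{S_P}$ admits a $k$-dominated splitting $E^s_0\oplus E^u_0$; choosing $n$ sufficiently large one has
\begin{equation*}
\sup_{q\in S_P}\ \frac{\Vert \psi^n_{\rho_0,q}|_{E^s_{0,q}}\Vert }{m(\psi^n_{\rho_0,q}|_{E^u_{0,q}})}\ \le\ \tfrac{1}{4}.
\end{equation*}
To compare data for different $\rho$ I pass to the universal cover: the underlying trivial bundle $\widetilde{E}=\widetilde{S_P}\times\mathbb{R}^d$ is independent of $\rho$; only the $\Gamma$-action (and the induced $\Gamma$-invariant fiber metric $\Vert \cdot\Vert _\rho$) depend on $\rho$, and do so continuously in $\rho\in\mathrm{Hom}(\Gamma,\mathrm{GL}(d,\mathbb{R}))$. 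Fixing a compact fundamental domain $F\subset \widetilde{S_P}$, the common trivialization of $\widetilde{E}|_F$ descends to a topological trivialization of $E_\rho|_{\pi(F)}$ for all $\rho$; this yields a continuous family of topological bundle isomorphisms $T_\rho:E_\rho|_{S_P}\to E_{\rho_0}|_{S_P}$ (with $T_{\rho_0}=\mathrm{id}$) under which $\widehat\psi^n_\rho:=T_{\rho_0}\circ\psi^n_\rho\circ T_\rho^{-1}$ depends continuously on $\rho$ in the $C^0$ topology, uniformly on $S_P$.

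With this identification, the $k$-dominated splitting of $\psi^n_{\rho_0}$ gives rise to strictly invariant open cone fields $\mathcal{C}^s_0,\mathcal{C}^u_0$ on $E_{\rho_0}|_{S_P}$ together with uniform contraction/expansion estimates (cf.\ \cite{BG}). By continuity and compactness of $S_P$, there exists a neighborhood $\mathcal{U}$ of $\rho_0$ on which $\widehat\psi^n_\rho$ still strictly preserves these cone fields and enjoys a uniform gap estimate (say $\le 1/2$). Theorem \ref{BGtheorem} then produces a $k$-dominated splitting for $\widehat\psi^n_\rho$, which pulls back via $T_\rho^{-1}$ to a $k$-dominated splitting of $\psi^n_\rho$; uniform continuity on $[0,1]\times S_P$ then passes from time-$n$ to time-$1$ exactly as in the proof of Theorem \ref{equivdands}. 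Applying Theorem \ref{equivdands} in the reverse direction gives that every $\rho\in\mathcal{U}$ is $k$-Anosov over $S_P$, which is the desired openness.

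The main technical point is constructing the identifications $T_\rho$ and verifying $C^0$-continuity of $\rho\mapsto\widehat\psi^n_\rho$; this ultimately reduces to continuity of $\rho\mapsto\rho(\gamma)$ for fixed $\gamma\in\Gamma$ combined with the fact that, thanks to compactness of $S_P$, the return-to-$F$ process for the lifted flow involves only a finite set of group elements. An alternative, more algebraic route would be to prove a finite-words criterion of the form ``there exist $n\in\mathbb{N}$ and $\delta\in(0,1)$ such that $\sigma_{k+1}(\rho(\gamma))/\sigma_k(\rho(\gamma))\le\delta$ for all $\gamma\in\Gamma_P^+$ with $|\gamma|=n$ already forces $\rho$ to be $k$-dominated on $\Gamma_P^+$,'' from which openness follows instantly by finiteness of $\{\gamma\in\Gamma_P^+:|\gamma|=n\}$ and continuity of $\rho\mapsto\sigma_i(\rho(\gamma))$; the $\Gamma$-invariance of $P$ allows every $\omega\in\Gamma_P^+$ to be decomposed along a geodesic in $G_P$ into an ordered product of length-$n$ pieces each lying in $\Gamma_P^+$, and the submultiplicativity of singular-value gaps under suitable transversality (via Lemma \ref{estsingularvalue}) would then propagate the length-$n$ gap globally.
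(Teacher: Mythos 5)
Your overall route is essentially the paper's: characterize the $k$-Anosov property via dominated splitting (Theorem~\ref{equivdands}), identify the bundles $E_\rho|_{S_P}$ and $E_{\rho_0}|_{S_P}$ continuously in $\rho$, and then show the splitting persists under perturbation. There is, however, a genuine gap at the identification step. You assert that a trivialization of $\widetilde{E}|_F$ over a compact fundamental domain $F$ ``descends to a topological trivialization of $E_\rho|_{\pi(F)}$,'' but this cannot work as stated: $E_\rho|_{S_P}$ is an associated bundle whose monodromy depends on $\rho$ and is not trivial in general, and a trivialization over $F$ is not compatible with the $\Gamma$-identifications on the boundary of $F$. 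The paper's Lemma~\ref{deformationvb} is precisely the nontrivial work required here---it constructs an equivariant isomorphism $g:O\times\widetilde E_{\rho_0}\to\widetilde E_O$ by averaging the identity against the two group actions with a partition of unity supported near a Dirichlet domain. You correctly flag this as ``the main technical point,'' but the mechanism you offer (descending a trivialization from a fundamental domain) does not actually produce it; the averaging construction, or some equivalent, is needed.

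Once the bundle identification is in place, your use of strictly invariant cone fields is a valid alternative to the contraction-on-sections (graph transform) argument the paper runs in Lemmas~\ref{blockedflow} and~\ref{contractionmap}---both are standard routes from a robustly dominated time-$n$ map over a compact base to an invariant dominated splitting. Note, though, that the citation to Theorem~\ref{BGtheorem} for this step is off: as stated in the paper it is the equivalence between singular-value gaps and dominated splittings and says nothing about cone fields, so you would need to invoke the cone-criterion portion of \cite{BG} (or argue it directly), not Theorem~A.

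Your sketched ``alternative, more algebraic route'' is too optimistic as written: the gap $\sigma_{k+1}/\sigma_k$ is not submultiplicative along concatenations without a transversality/alignment condition between consecutive pieces, so a bound at a single word length $n$ does not propagate to all of $\Gamma_P^+$ via Lemma~\ref{estsingularvalue} alone. Establishing that propagation is the substance of a local-to-global principle in the style of \cite{BPS} or \cite{KLP0} and would require substantial additional work, not a one-line consequence of finiteness of the length-$n$ sphere in $\Gamma_P^+$.
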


Let $\rho_0:\Gamma \rightarrow \mathrm{GL}(d,\mathbb{R})$ be a $k$-Anosov representation over $S_P$. Our goal is to find a neighborhood of $\rho_0$ that consists of $k$-Anosov representations over $S_P$ in $\mathrm{Hom}(\Gamma,\mathrm{GL}(d,\mathbb{R}))$. Let $O$ be a neighborhood of $\rho_0$ in $\mathrm{Hom}(\Gamma, \mathrm{GL}(d,\mathbb{R}))$. We denote $\widetilde{E}_O=O\times \widetilde{U\Gamma} \times \mathbb{R}^d$ and $E_O=\widetilde{E}_O/\Gamma$ where the $\Gamma$-action on $\widetilde{E}_O$ is defined by $$\gamma (\rho,z,v)=(\rho,\gamma z,\rho(\gamma)v),$$ for any $\rho\in O$, $\gamma\in\Gamma$, $z\in \widetilde{U\Gamma}$ and $v\in \mathbb{R}^d$. The flow $\widetilde{\phi}^t$ on $O\times \widetilde{U\Gamma}$  (respectively, $\widetilde{\psi}^t$ on $\widetilde{E}_O$) is defined by $\widetilde{\phi}^t(\rho,z)= (\rho,\widetilde{\phi}^t(z))$ (respectively, $\widetilde{\psi}^t(\rho, z, v)=(\rho, \widetilde{\psi}^t(z,v))=(\rho, \widetilde{\phi}^t(z),v)$) for any $\rho\in O$, $z\in \widetilde{U\Gamma}$ and $v\in \widetilde{E}_{O,(\rho,z)}$.\\

Intuitively, when two representations $\rho_1$ and $\rho_2\in \mathrm{Hom}(\Gamma, \mathrm{GL}(d,\mathbb{R}))$ are close enough, $E_{\rho_1}$ and $E_{\rho_2}$ should be isomorphic. 
More formally, we have the following lemma.

\begin{lemma}\label{deformationvb}
    By replacing $O$ by a smaller neighborhood of $\rho_0$, there exists an isomorphism of vector bundles fibring over $id_{O\times \widetilde{U\Gamma}}$,
    $$ g: O\times \widetilde{E}_{\rho_0} \simeq \widetilde{E}_O,$$ such that,
    \begin{itemize}
        \item[(1).] $g$ is $\Gamma$-equivariant, 
        where the $\Gamma$-action on $O\times \widetilde{E}_{\rho_0}$ is defined by 
        $$\gamma (\rho,z,v)=(\rho,\gamma z,\rho_0(\gamma)v),$$
        for any $\rho\in O$, $\gamma\in\Gamma$, $z\in \widetilde{U\Gamma}$ and $v\in \mathbb{R}^d$.
        \item[(2).] $g|_{\{\rho_0\}\times \widetilde{E}_{\rho_0} }$ is the identity map on $\{\rho_0\}\times \widetilde{E}_{\rho_0}$.
    \end{itemize}
\end{lemma}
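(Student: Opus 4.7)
The plan is to construct $g$ fiberwise using a partition-of-unity averaging trick. A vector bundle map $g : O \times \widetilde{E}_{\rho_0} \to \widetilde{E}_O$ covering $\id_{O \times \widetilde{U\Gamma}}$ amounts to a continuous family of linear maps $g_{(\rho, z)} : \mathbb{R}^d \to \mathbb{R}^d$, and the $\Gamma$-equivariance demanded by (1) unpacks to the identity
\[ g_{(\rho, \gamma z)} \;=\; \rho(\gamma) \, g_{(\rho, z)} \, \rho_0(\gamma)^{-1} \qquad \text{for all } \gamma \in \Gamma,\; \rho \in O,\; z \in \widetilde{U\Gamma}, \]
while condition (2) becomes $g_{(\rho_0, z)} = \id$ for all $z$. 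The isomorphism property then asks that each $g_{(\rho, z)}$ lie in $\mathrm{GL}(d, \mathbb{R})$.

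Since $\Gamma$ acts properly discontinuously and cocompactly on the locally compact metric space $\widetilde{U\Gamma}$, I would first produce a continuous, compactly supported function $\chi : \widetilde{U\Gamma} \to [0, 1]$ with $\sum_{\gamma \in \Gamma} \chi(\gamma^{-1} z) = 1$ for every $z$. To do this, take any continuous $\chi_0 \geqslant 0$ supported in a compact set whose $\Gamma$-translates cover $\widetilde{U\Gamma}$, and normalize by the locally finite, $\Gamma$-invariant, strictly positive sum $\sum_\gamma \chi_0(\gamma^{-1} \cdot)$. Then I set
\[ g_{(\rho, z)} \;:=\; \sum_{\gamma \in \Gamma} \chi(\gamma^{-1} z) \, \rho(\gamma) \rho_0(\gamma)^{-1}, \]
where the sum is locally finite by proper discontinuity. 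A direct reindexing $\gamma = \eta \beta$ verifies the equivariance identity, while $\rho = \rho_0$ gives $g_{(\rho_0, z)} = \sum_\gamma \chi(\gamma^{-1} z) \cdot \id = \id$. Continuity of $(\rho, z) \mapsto g_{(\rho, z)}$ follows from local finiteness and from continuity of $\rho \mapsto \rho(\gamma)$ on $\Hom(\Gamma, \mathrm{GL}(d,\mathbb{R}))$.

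The remaining task, which I expect to be the main subtlety, is to secure invertibility near $\rho_0$. I would fix a compact set $K \subset \widetilde{U\Gamma}$ with $\Gamma \cdot K = \widetilde{U\Gamma}$; then $g_{(\rho_0, z)} = \id$ for every $z \in K$, and by continuity together with compactness of $K$ there exists an open neighborhood $O \ni \rho_0$ such that $g_{(\rho, z)} \in \mathrm{GL}(d, \mathbb{R})$ for every $(\rho, z) \in O \times K$. The delicate point is that invertibility has to be checked on the compact slice $O \times K$ rather than on all of $O \times \widetilde{U\Gamma}$, because the twisted conjugation $A \mapsto \rho(\gamma) A \rho_0(\gamma)^{-1}$ is generally not an isometry and can distort singular values badly as $|\gamma| \to \infty$; a uniform bound away from $K$ is unavailable. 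Crucially, however, this conjugation \emph{does} preserve invertibility, so the $\Gamma$-equivariance identity propagates the open condition $g_{(\rho, z)} \in \mathrm{GL}(d, \mathbb{R})$ from $O \times K$ to all of $O \times \widetilde{U\Gamma}$. Shrinking $O$ if necessary, this produces the required isomorphism $g$.
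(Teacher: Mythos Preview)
Your proposal is correct and follows essentially the same approach as the paper: both construct $g$ by the averaging formula $g_{(\rho,z)}=\sum_{\gamma}\chi(\gamma^{-1}z)\,\rho(\gamma)\rho_0(\gamma)^{-1}$ with a compactly supported partition of unity, verify equivariance by reindexing, verify condition~(2) by collapsing the sum at $\rho=\rho_0$, and then check invertibility on a compact slice before propagating to all of $O\times\widetilde{U\Gamma}$ via the equivariance identity. The only cosmetic difference is in the invertibility step: the paper explicitly bounds the number of nonzero summands by the cardinality of a finite set $B\subset\Gamma$ and shrinks $O$ so that each summand lies in a small ball around $I$ whose $|B|$-fold sums remain invertible, whereas you invoke joint continuity of $(\rho,z)\mapsto g_{(\rho,z)}$ on the compact slice $\{\rho_0\}\times K$ together with a tube-lemma argument; both achieve the same end.
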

\begin{proof}
    Since $O\times \widetilde{E}_{\rho_0}$ and $\widetilde{E}_O$ are both trivial $\mathbb{R}$-vector bundles over $O\times \widetilde{U\Gamma}$, we may view the desired $g$ as a continuous map $g: O\times \widetilde{U\Gamma}\rightarrow \mathrm{GL}(d,\mathbb{R})$, with the $\Gamma$-equivariant condition, i.e., $$g(\rho,\gamma z)=\rho(\gamma)\circ g(\rho,z)\circ \rho_0(\gamma)^{-1}$$ for any $\rho\in O$, $z\in \widetilde{U\Gamma}$, $v\in\mathbb{R}^d$ and $\gamma\in\Gamma$, and $g|_{\{\rho_0\}\times \widetilde{U\Gamma}}\equiv I$.\\

    Let $d$ be the metric on $\widetilde{U\Gamma}$ given by Theorem \ref{metricunittangentbundle}, such that $\Gamma$ acts on $(\widetilde{U\Gamma},d)$ properly discontinuously and cocompactly by isometries. Let $p\in \widetilde{U\Gamma}$ be a fixed base point. Then the orbit map $\tau_p: \Gamma\rightarrow \widetilde{U\Gamma}$, $\tau_p(\gamma)=\gamma p$ is a $(\lambda,\epsilon)$-quasi-isometry for some constants $\lambda \geqslant 1$ and $\epsilon\geqslant 0$, by Theorem 60(c) \cite{Mineyev}.\\

    We consider the Dirichlet fundamental domain $$M=\{\ z\in \widetilde{{U\Gamma}}\ |\ d(z,p)\leqslant d(z,\gamma p),\forall \gamma\in\Gamma\ \},$$ which is bounded since $\tau_p$ is a quasi-isometry. Then there exists a constant $C>0$, such that $$M \subset B(p,C)=\{z\in \widetilde{U\Gamma}\ |\ d(z,p)< C \}.$$ For any $\alpha\in \Gamma$ with $|\alpha|> \lambda(2C+\epsilon)$ and any $z\in B(p,C)$, we have $$d(\alpha z, p)\geqslant d(\alpha p, p)-d(\alpha z, \alpha p) = d(\alpha p, p)-d( z,  p) > \lambda^{-1}|\alpha|-\epsilon-C> C,$$ hence $\alpha z\not\in B(p,C)$. We denote $A=\{\alpha\in \Gamma\ |\ |\alpha|>\lambda(2C+\epsilon) \}$. Then $B=\Gamma-A$ is finite. We denote the cardinal of $B$ by $|B|$.\\

    Let $U$ be a small enough open ball centered at $I$ in $\mathrm{GL}(d,\mathbb{R})$, such that any summation of at most $|B|$ many elements in $U$ is still invertible. We shrink $O$ such that $\rho(\gamma)\rho_0(\gamma)^{-1}\subset U$ for any $\rho\in O$ and $\gamma\in B$.\\

    Since $M$ is properly contained in $B(p,C)$, let $f_0$ be a continuous real function on $O\times \widetilde{U\Gamma}$ supported in $O\times B(p,C)$, such that $0\leqslant f_0 \leqslant 1$ and $f_0|_{O\times M}\equiv 1$. We define the map $g_0: O\times \widetilde{U\Gamma} \rightarrow \mathrm{Mat}_{d \times d}(\mathbb{R}) $ by $g_0= f_0 I$. Let $g: O\times \widetilde{U\Gamma} \rightarrow \mathrm{Mat}_{d \times d}(\mathbb{R}) $ be the map defined by $$g(\rho,z)=\dfrac{1}{\sum_{\gamma\in\Gamma} f_0(\rho,\gamma^{-1}z)} \sum_{\gamma\in\Gamma} \rho(\gamma)\circ g_0(\rho,\gamma^{-1}z)\circ \rho_0(\gamma)^{-1}.$$

    We now show that $g$ is the desired map. Firstly, $g$ is well-defined since there are at most $|B|$ nonzero terms in the summation above at any point and $\sum_{\gamma\in\Gamma} f_0(\rho,\gamma^{-1}z)$ is positive everywhere. Indeed, when $z\in M$, we only need to take the summation for $\gamma\in B$, since $f_0$ and $g_0$ are supported in $O\times B(p,C)$ but $\gamma z \not\in B(p,C)$ when $\gamma\in A=\Gamma-B$. Similarly, when $z\in \eta M$ for some $\eta\in \Gamma$, we only need to take the summation for $\gamma\in B\eta$. The denominator $\sum_{\gamma\in\Gamma} f_0(\rho,\gamma^{-1}z)$ is positive everywhere since when $z\in \eta M$ for some $\eta\in \Gamma$, $f_0(\rho, \eta^{-1}z)=1$ and the other terms are all non-negative.\\

    Secondly, $g$ is $\rho$-equivariant since 
    \begin{align*}
        g(\rho,\eta z) &= \dfrac{1}{\sum_{\gamma\in\Gamma} f_0(\rho,\gamma^{-1}\eta z)} \sum_{\gamma\in\Gamma} \rho(\gamma)\circ g_0(\rho,\gamma^{-1}\eta z)\circ \rho_0(\gamma)^{-1}\\
        &= \dfrac{1}{\sum_{\gamma\in\Gamma} f_0(\rho,(\eta^{-1}\gamma)^{-1} z)} \sum_{\gamma\in\Gamma} \rho(\eta)\rho(\eta^{-1}\gamma)\circ g_0(\rho,(\eta^{-1}\gamma)^{-1} z)
        \circ \rho_0(\eta^{-1}\gamma)^{-1}\rho_0(\eta)^{-1}\\
        &= \rho(\eta)\Bigg( \dfrac{1}{\sum_{\eta\gamma\in\Gamma} f_0(\rho,\gamma^{-1}z)} \sum_{\eta\gamma\in\Gamma} \rho(\gamma)\circ g_0(\rho,\gamma^{-1} z)\circ \rho_0(\gamma)^{-1}\Bigg)\rho_0(\eta)^{-1} \\
        &= \rho(\eta)\Bigg( \dfrac{1}{\sum_{\gamma\in\Gamma} f_0(\rho,\gamma^{-1}z)} \sum_{\gamma\in\Gamma} \rho(\gamma)\circ g_0(\rho,\gamma^{-1} z)\circ \rho_0(\gamma)^{-1}\Bigg)\rho_0(\eta)^{-1} \\
        &= \rho(\eta)g(\rho,z)\rho_0(\eta)^{-1}.\\
    \end{align*}

    By our choice of $U$ and $O$, $g(\rho,z)$ is invertible for any $(\rho,z)\in O\times M$. Then the value $g$ is invertible at any point of $O\times \widetilde{U\Gamma}$
    by the $\rho$-equivariance condition, and hence
    $g:O\times \widetilde{U\Gamma}\rightarrow  \mathrm{Mat}_{d \times d}(\mathbb{R})$ has image in $\mathrm{GL}(d,\mathbb{R})$.\\

    Finally, for any $z\in \widetilde{U\Gamma}$,
    \begin{align*}
        g(\rho_0,z)&=\dfrac{1}{\sum_{\gamma\in\Gamma} f_0(\rho_0,\gamma^{-1}z)} \sum_{\gamma\in\Gamma} \rho_0(\gamma)\circ f_0(\rho_0,\gamma^{-1}z)I\circ \rho_0(\gamma)^{-1}\\
        &= \dfrac{1}{\sum_{\gamma\in\Gamma} f_0(\rho_0,\gamma^{-1}z)} \sum_{\gamma\in\Gamma}  f_0(\rho_0,\gamma^{-1}z)I = I
    \end{align*}
    Hence $g|_{\{\rho_0\}\times \widetilde{U\Gamma}}\equiv I$.\\
\end{proof}

Since $\rho_0$ is $k$-Anosov over $S_P$, by Theorem \ref{equivdands}, $E_{\rho_0}$ admits a $k$-dominated splitting over $S_P$, denoted by $E_{\rho_0}|_{S_P}=E^s_{\rho_0}\oplus E^u_{\rho_0}$, where $E^s_{\rho_0}$ and $E^u_{\rho_0}$ are $\psi$-invariant subbundles of $E_{\rho_0}|_{S_P}$ over $S_P$, with $\mathrm{rank}(E^s_{\rho_0})=k$, and there are constants $C,\lambda>0$, such that $$\frac{\Vert \psi_q^t(v)\Vert }{\Vert \psi_q^t(w)\Vert }\leqslant Ce^{-\lambda t}\dfrac{\Vert v\Vert }{\Vert w\Vert },$$ for any $q\in S_P, t\in \mathbb{R}_{+}$ and nonzero vectors $v\in E^s_{\rho_0,q}$, $w\in E^u_{\rho_0,q}$.\\

Let $g: O\times \widetilde{E}_{\rho_0} \simeq \widetilde{E}_O$ be an isomorphism given by Lemma \ref{deformationvb}. This descends to an isomorphism (still denoted by $g$) $g: O\times {E}_{\rho_0} \simeq {E}_O$. Then ${E}_O|_{O\times S_P} = g(O\times {E}^s_{\rho_0}) \oplus g(O\times {E}^u_{\rho_0})$. We denote ${E}_O^s=g(O\times {E}^s_{\rho_0})$ and ${E}_O^u=g(O\times {E}^u_{\rho_0})$, which are subbundle of ${E}_O|_{O\times S_P}$ over $O\times S_P$. In particular, ${E}_O^s|_{\{\rho_0\}\times S_P} = E^s_{\rho_0}$ and ${E}_O^u|_{\{\rho_0\}\times S_P} = E^u_{\rho_0}$. Recall that we have a Riemannian metric $\Vert \cdot\Vert $ on $E_{\rho_0}$, which extends to a metric on $O\times E_{\rho_0}$ and then induces a metric on the vector bundle $E_O$ through $g$, still denoted by $\Vert \cdot\Vert $.\\

Since $g$ is continuous, $\psi$ is continuous on $E_O|_{O\times S_P}$. However, the splitting ${E}_O|_{O\times S_P} = {E}_O^s\oplus {E}_O^u$ may not be $\psi^t$-invariant. We use a standard method (see for example, the proof of \cite{CZZ} Theorem 8.1 (1)) to find a $\psi^t$-invariant splitting of ${E}_O|_{O\times S_P}$ using the $k$-Anosov property.\\

We write $\psi^t= \left(\begin{matrix}
    \psi^t_{(1,1)} & \psi^t_{(1,2)}\\
    \psi^t_{(2,1)} & \psi^t_{(2,2)}
\end{matrix}\right)$  with linear maps
$\psi^t_{(1,1)}: {E}_O^s \rightarrow {E}_O^s$,
$\psi^t_{(1,2)}: {E}_O^u \rightarrow {E}_O^s$,
$\psi^t_{(2,1)}: {E}_O^s \rightarrow {E}_O^u$ and 
$\psi^t_{(2,2)}: {E}_O^u \rightarrow {E}_O^u$.\\

For a linear map $h$ between two subbundles of $E_O$, we denote $\Vert  h \Vert_{(\rho,q)}$ the operator norm of $h$ at $(\rho,q)\in O\times S_P$ and $$\Vert  h \Vert_{O\times S_P}=\sup_{(\rho,q)\in O\times S_P} \Vert h \Vert_{(\rho,q)}$$ the \emph{global operator norm}.

\begin{lemma}\label{blockedflow}
    After replacing $O$ by a smaller neighborhood of $\rho_0$ again, we can find a constant $T>0$ such that for any $t\in [T,2T]$, we have
    \begin{itemize}
        \item[(1).] $\psi^t_{(1,1)}$ and $\psi^t_{(2,2)}$ are invertible and $$\sup_{(\rho,q)\in O\times S_P} \Vert \psi^t_{(1,1)}\Vert_{(\rho,q)}\cdot \Vert (\psi^t_{(2,2)})^{-1}\Vert_{(\rho,\phi^t(q))} \leqslant \dfrac{1}{3};$$
        \item[(2).] $$\Vert  (\psi^t_{(1,1)})^{-1} \psi^t_{(1,2)} \Vert_{O\times S_P} \leqslant \dfrac{1}{3};$$
        \item[(3).] $$\Vert  (\psi^t_{(2,2)})^{-1} \psi^t_{(2,1)} \Vert_{O\times S_P} \leqslant \dfrac{1}{3}.$$
    \end{itemize}
\end{lemma}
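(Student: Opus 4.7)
The plan is to verify the three inequalities at the base representation $\rho_0$ with slack, and then propagate to a neighborhood of $\rho_0$ by the continuity of $(\rho,q,t)\mapsto\psi^t$ on the compact parameter set $\{\rho_0\}\times S_P\times [T,2T]$, afterwards opening it up to a small neighborhood of $\rho_0$.

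At $\rho=\rho_0$, the subbundles $E^s_O$ and $E^u_O$ restrict to $E^s_{\rho_0}$ and $E^u_{\rho_0}$, which are $\psi$-invariant by construction of the dominated splitting in Theorem \ref{equivdands}. Hence the off-diagonal blocks $\psi^t_{(1,2)}$ and $\psi^t_{(2,1)}$ vanish identically on $\{\rho_0\}\times S_P$ for every $t\in\mathbb{R}$, while $\psi^t_{(1,1)}=\psi^t|_{E^s_{\rho_0}}$ and $\psi^t_{(2,2)}=\psi^t|_{E^u_{\rho_0}}$ are invertible, as restrictions of an isomorphism to $\psi^t$-invariant subbundles. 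Since
$$\Vert (\psi^t_{(2,2)})^{-1}\Vert_{(\rho_0,\phi^t(q))}=m\bigl(\psi^t|_{E^u_{\rho_0,q}}\bigr)^{-1},$$
Remark \ref{midsingular} yields
$$\sup_{q\in S_P}\Vert \psi^t_{(1,1)}\Vert_{(\rho_0,q)}\cdot \Vert (\psi^t_{(2,2)})^{-1}\Vert_{(\rho_0,\phi^t(q))} \leqslant Ce^{-\lambda t}.$$
I now choose $T>0$ large enough that $Ce^{-\lambda T}\leqslant \frac{1}{6}$. Then for every $t\in[T,2T]$, condition (1) holds at $\rho_0$ with an extra factor of $2$ in reserve, and conditions (2) and (3) hold trivially since the numerators vanish.

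Having fixed $T$, I now shrink $O$. By Lemma \ref{deformationvb} the isomorphism $g$ is continuous in $\rho$, so the block entries $\psi^t_{(i,j)}$ and their inverses (where defined) are continuous functions of $(\rho,q,t)$ through the trivialization. Since $S_P\times[T,2T]$ is compact and the invertibility of an operator is open in the operator norm, there is a neighborhood $O_1\subset O$ of $\rho_0$ on which $\psi^t_{(1,1)}$ and $\psi^t_{(2,2)}$ remain invertible, and on which $\Vert \psi^t_{(1,1)}\Vert$, $\Vert (\psi^t_{(1,1)})^{-1}\Vert$, $\Vert (\psi^t_{(2,2)})^{-1}\Vert$ are uniformly bounded in $(\rho,q,t)\in O_1\times S_P\times[T,2T]$. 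The off-diagonal blocks $\psi^t_{(1,2)}$ and $\psi^t_{(2,1)}$ vanish on $\{\rho_0\}\times S_P\times[T,2T]$ and are uniformly continuous on the compact domain, so a further shrinking to some $O'\subset O_1$ makes their operator norms arbitrarily small; combined with the uniform bounds on $\Vert (\psi^t_{(1,1)})^{-1}\Vert$ and $\Vert (\psi^t_{(2,2)})^{-1}\Vert$, this delivers (2) and (3). The same compactness-continuity argument applied to the product $\Vert \psi^t_{(1,1)}\Vert\cdot \Vert (\psi^t_{(2,2)})^{-1}\Vert$ ensures that it differs from its value at $\rho_0$ by at most $\frac{1}{6}$ on $O'$, so that (1) follows from the bound $Ce^{-\lambda T}\leqslant \frac{1}{6}$ established above.

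The only subtlety is the logical order: the compactness argument in the second step requires the interval $[T,2T]$ to be fixed \emph{before} the neighborhood $O'$ is chosen, because the size of $O'$ depends on $T$. No genuine obstacle arises; the estimate for (1) at $\rho_0$ is exactly the content of the dominated splitting, and the persistence of (1)–(3) under perturbation is a standard continuity argument on a compact product.
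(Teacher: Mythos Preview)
Your proof is correct and follows essentially the same approach as the paper: establish the estimates at $\rho_0$ using the $\psi$-invariance of the splitting (so the off-diagonal blocks vanish) together with the dominated-splitting inequality to get the $\tfrac{1}{6}$ bound, fix $T$, then shrink $O$ by a continuity--compactness argument on $S_P\times[T,2T]$. Your explicit remark about the logical order (fix $T$ before choosing $O'$) matches exactly how the paper structures the argument.
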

\begin{proof}
    Since $E_{\rho_0}|_{S_P}=E^s_{\rho_0}\oplus E^u_{\rho_0}$ is a $k$-dominated splitting over $S_P$, we have $\psi^t_{(1,2),(\rho_0,q)}=0$, $\psi^t_{(2,1),(\rho_0,q)}=0$,  
    $\psi^t_{(1,1),(\rho_0,q)}$ and $\psi^t_{(2,2),(\rho_0,q)}$ are invertible, and
    $$\Vert \psi^t_{(1,1)}\Vert_{(\rho_0,q)}\cdot \Vert (\psi^t_{(2,2)})^{-1}\Vert_{(\rho_0,\phi^t(q))} =
    \dfrac{\Vert \psi^t_{(1,1)}\Vert_{(\rho_0,q)}}{ m(\psi^t_{(2,2),(\rho_0,q)}) } = \dfrac{\Vert \psi^t|_{E_{\rho_0}^s}\Vert  }{m(\psi^t|_{E_{\rho_0}^u})} \leqslant Ce^{-\lambda t},$$
    for any $q\in S_P$ and $t\in \mathbb{R}_+$.
    Hence we can find a constant $T>0$, such that $$\sup_{q\in  S_P}\Vert \psi^t_{(1,1)}\Vert_{(\rho_0,q)}\cdot \Vert (\psi^t_{(2,2)})^{-1}\Vert_{(\rho_0,\phi^t(q))} \leqslant \dfrac{1}{6},$$
    for any $t\geqslant T$.
    By the continuity of $\psi^t$ and the compactness of $S_P$, we can replace $O$ by a smaller neighborhood of $\rho_0$, such that $\psi^t_{(1,1)}$ and $\psi^t_{(2,2)}$ are invertible and 
    $$\sup_{(\rho,q)\in O\times S_P} \Vert \psi^t_{(1,1)}\Vert_{(\rho,q)}\cdot \Vert (\psi^t_{(2,2)})^{-1}\Vert_{(\rho,\phi^t(q))} \leqslant \dfrac{1}{3},$$
    for any $t\in [T,2T]$.\\

    By shrinking $O$ again, we will have 
    $\Vert  (\psi^t_{(1,1)})^{-1} \psi^t_{(1,2)} \Vert_{O\times S_P} \leqslant \frac{1}{3}$ and
    $\Vert  (\psi^t_{(2,2)})^{-1} \psi^t_{(2,1)} \Vert_{O\times S_P} \leqslant \frac{1}{3}$ for any $t\in [T,2T]$,
    since $[T,2T]$, $S_P$ are compact and $\psi^t_{(1,2),(\rho_0,q)}=0$, $\psi^t_{(2,1),(\rho_0,q)}=0$ for any $q\in S_P$.\\
\end{proof}

Now we consider the vector bundle $\mathrm{Hom}({E}_O^u, {E}_O^s)$ over $O\times S_P$. For any $r>0$, let $F_r$ denote the set of sections $f$ of $\mathrm{Hom}({E}_O^u, {E}_O^s)$ with
$\Vert f\Vert_{O\times S_P}\leqslant r$, then $F_r$ is a compact metric space.

\begin{lemma}\label{contractionmap}
    For any $t\in [T,2T]$,
    define the flow $\Psi^t$ on $F_1$ by $$\Psi^t(f)= ( \psi^t_{(1,1)}f+ \psi^t_{(1,2)} )\cdot ( \psi^t_{(2,1)}f +\psi^t_{(2,2)} )^{-1},$$ for any $f\in F_1$ (or equivalently, $\Psi^t$ is defined such that $\psi^t$ maps the graph of $f_{(\rho,q)}$ to the graph of $\Psi^t(f_{(\rho,q)})$, for any $(\rho,q)\in O\times S_P$). Then $\Psi^t$ is a well-defined contraction map on $F_1$.
\end{lemma}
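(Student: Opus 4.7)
The plan is to verify three things in order: well-definedness of $\Psi^t$, that $\Psi^t$ maps $F_1$ into itself, and that $\Psi^t$ is a strict contraction. Throughout I abbreviate $A=\psi^t_{(1,1)}$, $B=\psi^t_{(1,2)}$, $C=\psi^t_{(2,1)}$, $D=\psi^t_{(2,2)}$, and use only the three uniform bounds supplied by Lemma \ref{blockedflow}: $\Vert A\Vert_{(\rho,q)}\cdot\Vert D^{-1}\Vert_{(\rho,\phi^t(q))}\leqslant 1/3$, $\Vert A^{-1}B\Vert_{O\times S_P}\leqslant 1/3$, and $\Vert D^{-1}C\Vert_{O\times S_P}\leqslant 1/3$.

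For well-definedness, I would factor $Cf+D = D(I+D^{-1}Cf)$; since $D$ is invertible and $\Vert D^{-1}Cf\Vert\leqslant \Vert D^{-1}C\Vert\cdot\Vert f\Vert\leqslant 1/3$ uniformly on $O\times S_P$, the Neumann series gives $(I+D^{-1}Cf)^{-1}$ with norm at most $3/2$, so $\Psi^t(f)$ is well-defined. The bound $\Vert\Psi^t(f)\Vert_{O\times S_P}\leqslant 1$ then follows by rewriting $\Psi^t(f)=A(f+A^{-1}B)(I+D^{-1}Cf)^{-1}D^{-1}$ and estimating
$$\Vert\Psi^t(f)\Vert \leqslant \Vert A\Vert\cdot\Vert D^{-1}\Vert\cdot(\Vert f\Vert+\Vert A^{-1}B\Vert)\cdot\Vert(I+D^{-1}Cf)^{-1}\Vert \leqslant \tfrac{1}{3}\cdot\tfrac{4}{3}\cdot\tfrac{3}{2}=\tfrac{2}{3},$$
uniformly over $O\times S_P$.

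For the contraction, the key step will be the algebraic identity
$$\Psi^t(f_1)-\Psi^t(f_2) = [A-\Psi^t(f_1)C](f_1-f_2)(Cf_2+D)^{-1},$$
verified by direct expansion using $\Psi^t(f_i)(Cf_i+D)=Af_i+B$. Splitting the right side as $A(f_1-f_2)(Cf_2+D)^{-1}-\Psi^t(f_1)C(f_1-f_2)(Cf_2+D)^{-1}$ and using $(Cf_2+D)^{-1}=(I+D^{-1}Cf_2)^{-1}D^{-1}$, the first piece is bounded by $\Vert A\Vert\cdot\Vert D^{-1}\Vert\cdot(3/2)\Vert f_1-f_2\Vert\leqslant(1/2)\Vert f_1-f_2\Vert$. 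For the second piece, I would first compute $\Vert\Psi^t(f_1)C\Vert\leqslant\Vert A\Vert\cdot(1+\Vert A^{-1}B\Vert)\cdot\Vert(I+D^{-1}Cf_1)^{-1}\Vert\cdot\Vert D^{-1}C\Vert\leqslant(2/3)\Vert A\Vert$ via the same factorization, then combine with $\Vert(Cf_2+D)^{-1}\Vert\leqslant(3/2)\Vert D^{-1}\Vert$ to bound the second piece by $\Vert A\Vert\cdot\Vert D^{-1}\Vert\cdot\Vert f_1-f_2\Vert\leqslant(1/3)\Vert f_1-f_2\Vert$. Adding yields a uniform contraction factor of at most $5/6<1$.

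The main conceptual obstacle will be the fiberwise bookkeeping: each of $A,B,C,D$ is a linear map between specific $E_O^s$ or $E_O^u$ fibers, one at $(\rho,q)$ and the other at $(\rho,\phi^t(q))$, so the identity and all estimates only make sense once the compositions are tracked carefully and the global norms are matched with the correct pointwise suprema over $(\rho,q)\in O\times S_P$. Once this bookkeeping is in place, the algebra and the estimates are routine.
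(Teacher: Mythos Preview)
Your proposal is correct. The well-definedness and self-map steps are identical to the paper's proof (same factorization $\Psi^t(f)=A(f+A^{-1}B)(I+D^{-1}Cf)^{-1}D^{-1}$, same Neumann estimate, same bound $2/3$). For the contraction step the paper instead bounds the directional derivative $\tfrac{d}{ds}\Psi^t(f+sg)\big|_{s=0}$ and obtains the same Lipschitz constant $5/6$; your finite-difference identity
\[
\Psi^t(f_1)-\Psi^t(f_2)=\bigl[A-\Psi^t(f_1)C\bigr](f_1-f_2)(Cf_2+D)^{-1}
\]
is an equally standard route and has the minor advantage of avoiding any appeal to the mean value theorem in a Banach-space setting. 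The fiberwise bookkeeping you flag is exactly the point to watch: in both pieces of your splitting the factor $\Vert A\Vert_{(\rho,q)}$ pairs with $\Vert D^{-1}\Vert_{(\rho,\phi^t(q))}$ at the \emph{same} base point $q$, so Lemma~\ref{blockedflow}(1) applies pointwise before taking the supremum, and the estimate goes through as written.
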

\begin{proof} Let $f$ be a section of $\mathrm{Hom}({E}_O^u, {E}_O^s)$ and $\Vert f\Vert_{O\times S_P}\leqslant 1$.
    By computation, we have that
    \begin{align*}
        \Psi^t(f) = & ( \psi^t_{(1,1)}f+ \psi^t_{(1,2)} )\cdot ( \psi^t_{(2,1)}f +\psi^t_{(2,2)} )^{-1}\\
        = &(\psi^t_{(1,1)})( f+ (\psi^t_{(1,1)})^{-1}\psi^t_{(1,2)} )\cdot ( (\psi^t_{(2,2)})^{-1}\psi^t_{(2,1)}f +\id )^{-1}(\psi^t_{(2,2)})^{-1},
    \end{align*}
    where $ (\psi^t_{(2,2)})^{-1}\psi^t_{(2,1)}f +\id $ is invertible since $$\Vert (\psi^t_{(2,2)})^{-1}\psi^t_{(2,1)}f\Vert_{O\times S_P}\leqslant 
    \Vert (\psi^t_{(2,2)})^{-1}\psi^t_{(2,1)}\Vert_{O\times S_P}\cdot \Vert f\Vert_{O\times S_P}\leqslant \dfrac{1}{3},$$
    by Lemma \ref{blockedflow}, and so $ \Vert ((\psi^t_{(2,2)})^{-1}\psi^t_{(2,1)}f +\id )^{-1}\Vert_{O\times S_P}\leqslant \dfrac{1}{1-\frac{1}{3}} =\dfrac{3}{2}$. Then 
    \begin{align*}
        \Vert \Psi^t(f)\Vert_{O\times S_P} &
        =\Vert (\psi^t_{(1,1)})( f+ (\psi^t_{(1,1)})^{-1}\psi^t_{(1,2)} )\cdot ( (\psi^t_{(2,2)})^{-1}\psi^t_{(2,1)}f +\id )^{-1}(\psi^t_{(2,2)})^{-1}\Vert_{O\times S_P}\\
        & \leqslant \Vert f+ (\psi^t_{(1,1)})^{-1}\psi^t_{(1,2)}\Vert_{O\times S_P}\cdot \Vert ( (\psi^t_{(2,2)})^{-1}\psi^t_{(2,1)}f +\id )^{-1}\Vert_{O\times S_P}\\
        & \cdot \sup_{(\rho,q)\in O\times S_P} \Vert \psi^t_{(1,1)}\Vert_{(\rho,q)}\cdot \Vert (\psi^t_{(2,2)})^{-1}\Vert_{(\rho,\phi^t(q))}\\
        & \leqslant ( 1+\dfrac{1}{3} )\cdot \dfrac{3}{2}\cdot \dfrac{1}{3}= \dfrac{2}{3} <1.
    \end{align*}
    Hence $\Psi$ is well-defined such that $\Psi^t(F_1)\subset F_{\frac{2}{3}}\subset F_1$.\\

    Similar to the computation above, for sections $f,g$ of $\mathrm{Hom}({E}_O^u, {E}_O^s)$ with $f\in F_1$, $$\dfrac{d}{ds}(\Psi^t(f+sg))|_{s=0} = ( \psi^t_{(1,1)}f+ \psi^t_{(1,2)} ) \cdot \dfrac{d}{ds}[( \psi^t_{(2,1)}(f+sg) +\psi^t_{(2,2)} )^{-1}]|_{s=0} + \psi^t_{(1,1)}g\cdot ( \psi^t_{(2,1)}f +\psi^t_{(2,2)} )^{-1}.$$ Since $( \psi^t_{(2,1)}(f+sg) +\psi^t_{(2,2)} )^{-1}( \psi^t_{(2,1)}(f+sg) +\psi^t_{(2,2)} )= \id $, apply $\dfrac{d}{ds}|_{s=0}$, then $$\dfrac{d}{ds}[( \psi^t_{(2,1)}(f+sg) +\psi^t_{(2,2)} )^{-1}]|_{s=0} =-( \psi^t_{(2,1)}f+ \psi^t_{(2,2)} )^{-1}\psi^t_{(2,1)}g ( \psi^t_{(2,1)}f +\psi^t_{(2,2)} )^{-1}.$$ Then
    \begin{align*}
        \dfrac{d}{ds}(\Psi^t(f+sg))|_{s=0} & =-( \psi^t_{(1,1)}f+ \psi^t_{(1,2)} )\cdot ( \psi^t_{(2,1)}f+ \psi^t_{(2,2)} )^{-1} \psi^t_{(2,1)}g ( \psi^t_{(2,1)}f +\psi^t_{(2,2)} )^{-1}\\
        &\ \  + \psi^t_{(1,1)}g\cdot ( \psi^t_{(2,1)}f +\psi^t_{(2,2)} )^{-1}\\
        & = \psi^t_{(1,1)} \Big(- ( f+ (\psi^t_{(1,1)})^{-1}\psi^t_{(1,2)} )\\ 
        &\ \  \cdot ( (\psi^t_{(2,2)})^{-1}\psi^t_{(2,1)}f +\id )^{-1}
        (\psi^t_{(2,2)})^{-1}\psi^t_{(2,1)}g ( (\psi^t_{(2,2)})^{-1}\psi^t_{(2,1)}f +\id )^{-1}\\
        &\ \ +g\cdot ( (\psi^t_{(2,2)})^{-1}\psi^t_{(2,1)}f +\id )^{-1} \Big) (\psi^t_{(2,2)})^{-1}.
    \end{align*}
    Therefore, $$ \Vert \dfrac{d}{ds}(\Psi^t(f+sg))|_{s=0}\Vert_{O\times S_P}\leqslant \dfrac{1}{3}\Big( (1+\dfrac{1}{3})\cdot\dfrac{3}{2}\cdot\dfrac{1}{3}\cdot\Vert g\Vert_{O\times S_P}\cdot\dfrac{3}{2}+ \Vert g\Vert_{O\times S_P}\cdot \dfrac{3}{2} \Big) \leqslant \dfrac{5}{6} \Vert g\Vert_{O\times S_P}.$$ Hence for two sections $f_1,f_2\in F_1$, $$\Vert \Psi^t(f_1)-\Psi^t(f_2)\Vert_{O\times S_P}\leqslant \dfrac{5}{6}\Vert f_1-f_2\Vert_{O\times S_P}.$$
\end{proof}

By the contraction mapping theorem, for each $t\in [T,2T]$, we can find $f^{(t)}\in F_1$ the unique fixed point of $\Psi^t$. Since $t\mapsto\Psi^t$ is continuous on $[T,2T]$, $f^{(t)}$ is also a continuous function on $[T,2T]$. For two rational numbers $t_1,t_2\in [T,2T]$, we can find $n_1,n_2\in \mathbb{N}_+$, such that $n_1t_1=n_2t_2$. Then $(\Psi^{t_1})^{mn_1}(f^{(t_2)})$ will converge to the fixed point of $\Psi^{t_1}$ which is $f^{(t_1)}$, as $m\rightarrow +\infty$. On the other hand, $(\Psi^{t_1})^{mn_1}(f^{(t_2)})=\Psi^{mn_1t_1}(f^{(t_2)})= \Psi^{mn_2t_2}(f^{(t_2)})= (\Psi^{t_2})^{mn_2}(f^{(t_2)})= f^{(t_2)}$. Hence $f^{(t_1)}=f^{(t_2)}$. Therefore $f^{(t)}\equiv f_0$ is a constant on $[T,2T]$.\\

The graph of $f_0$ defines a subbundle of $E_{O}|_{O\times S_P}$, denoted by $\hat{E}^u_O$, which is $\psi^t$-invariant for any $t\in\mathbb{R}$. Actually, $f_0$ is $\Psi^s$-invariant for any $s\in [T,2T]$, and for any $t\in\mathbb{R}$, we may write $t=s+nT$ for some $n\in\mathbb{Z}$. Then $\Psi^t(f_0)=\Psi^{s}(f_0)=f_0$. By the uniqueness of the fixed point, $\hat{E}^u_O|_{\{\rho_0\}\times S_P} = E^u_{\rho_0}$.\\

Similarly, there exists a unique section $g_0$ of $\mathrm{Hom}({E}_O^s, {E}_O^u)$, with the global operator norm $\Vert g_0\Vert_{O\times S_P}\leqslant 1$, which is the unique fixed point of the flow $$\Phi^t(g)=( \psi^t_{(2,2)}g+ \psi^t_{(2,1)} )\cdot ( \psi^t_{(1,2)}g +\psi^t_{(1,1)} )^{-1},$$ for any $t\in [T,2T]$ and section $g$ of $\mathrm{Hom}({E}_O^s, {E}_O^u)$ with $\Vert g\Vert_{O\times S_P}\leqslant 1$. The graph of $g_0$ is a subbundle of $E_{O}|_{O\times S_P}$, denoted by $\hat{E}^s_O$, which is $\psi^t$-invariant and $\hat{E}^s_O|_{\{\rho_0\}\times S_P} = E^s_{\rho_0}$.\\

By shrinking $O$ again, we assume $\hat{E}^s_O$ and $\hat{E}^u_O$ are transverse on $O\times S_P$ as they are continuous and transverse on $\{\rho_0\}\times S_P$, which is compact.\\

Now we rewrite $\psi^t$ with respect to the new $\psi$-invariant splitting $E_O|_{O\times S_P}=\hat{E}^s_O\oplus\hat{E}^u_O$ as
$\psi^t= \left(\begin{matrix}
    \psi^t_s & 0\\
    0 & \psi^t_u
\end{matrix}\right)$
with linear transformations $\psi^t_{s}: \hat{E}_O^s \rightarrow \hat{E}_O^s$, $\psi^t_{u}: \hat{E}_O^u \rightarrow \hat{E}_O^u$. By the same argument as in Lemma \ref{blockedflow}, after shrinking $O$ once again, $\psi^t_{s}$ and $\psi^t_{u}$ are invertible and there exists $T>0$, such that $$\sup_{(\rho,q)\in O\times S_P} \Vert \psi^t_{s}\Vert_{(\rho,q)}\cdot \Vert (\psi^t_{u})^{-1}\Vert_{(\rho,\phi^t(q))} \leqslant \dfrac{1}{3},$$ for any $t\geqslant T$. Since $S_P$ and $[0,T]$ are compact, and we may choose $O$ relatively compact, there exists a constant $C>0$ such that $$\Vert \psi^t_{s}\Vert_{(\rho,q)}\cdot \Vert (\psi^t_{u})^{-1}\Vert_{(\rho,\phi^t(q))} = \dfrac{\Vert \psi^t_{s}\Vert_{(\rho,q)}}{ m(\psi^t_{u,(\rho,q)}) } = \dfrac{\Vert \psi^t|_{\hat{E}_{O,(\rho,q)}^s}\Vert  }{m(\psi^t|_{\hat{E}_{O,(\rho,q)}^u})} \leqslant C\left(\dfrac{1}{3}\right)^{\frac{t}{T}-1}.$$ Therefore, the splitting $E_O|_{O\times S_P}=\hat{E}^s_O\oplus\hat{E}^u_O$ restricted to $\rho\times S_P$ gives a $k$-dominated splitting of $\{\rho\}\times E_\rho\simeq E_\rho$, for any $\rho\in O$. Hence by Theorem \ref{equivdands}, $\rho$ is $k$-Anosov over $S_P$, for any $\rho\in O$, which finishes the proof of Theorem \ref{opennbhdstab}.

\begin{remark}
    We actually showed that for a representation $\rho_0$ which is $k$-Anosov over $S_P$, there exists a neighborhood $O$ of $\rho_0$, such that the representations in $O$ admit $k$-dominated splitting over $S_P$ with uniform constants $C$ and $\lambda$ in Definition \ref{defdomsplit}. Then by Remark \ref{midsingular} and the proof of Theorem \ref{mainthm1}, they are $k$-Anosov over $S_P$ and $k$-dominated on $\Gamma_P^+$ with uniform constants $C$ and $\lambda$ in Definition \ref{DefAnosovoverSp} and Definition \ref{defdomprop} respectively.
\end{remark}\ \\

\section{Examples}\label{theexamples}
In this section, we introduce several examples related to our results.\\

\subsection{Primitive-stable representations}\label{exampleps} \ \\

We recall the definition of primitive-stable representations introduced by Minsky \cite{Min}.\\

Let $F_n$ be a free group of rank $n$ with a fixed generating set $S_0$ with $|S_0|=n$ and $S=S_0\cup (S_0)^{-1}$. Let $\Cay(F_n,S)$ denote the Cayley graph of $F_n$, which is a tree of degree $2n$.

\begin{definition}
    An element $\gamma$ of $F_n$ is \emph{primitive}, if there exist another $n-1$ elements $\gamma_2,\gamma_3,...,\gamma_n\in F_n$, such that $\{ \gamma,\gamma_2,...,\gamma_n\}$ generates $F_n$. Let $\mathcal{P}$ denote the set of all primitive elements of $F_n$. Equivalently, $\mathcal{P}=\mathrm{Aut}(F_n)\cdot S_0$.
\end{definition}

Let $P$ be the closure of the set $\{(\gamma^+,\gamma^-)\ | \ \gamma \in \mathcal{P} \}$ in $\partial^{(2)}F_n$. If $\gamma\in \mathcal{P}$, we call the $\gamma$-axis, i.e., the unique geodesic in $\Cay(F_n,S)$ with endpoints $(\gamma^+,\gamma^-)$, a \emph{primitive geodesic}.\\

For a representation $\rho : F_n \to \mathrm{PSL}(2,\mathbb{C})$ and a fixed point $x \in \mathbb{H}^3$, we define the $\rho$-equivariant orbit map $\tau_{x,\rho}:\Cay(F_n, S)\to\mathbb{H}^3$ by $\gamma \mapsto \rho(\gamma)\cdot x$, for any $\gamma\in F_n$, which extends linearly on the edges of $\Cay(F_n,S)$.

\begin{definition}\label{defps}
Say $\rho: F_n \to \mathrm{PSL}(2,\mathbb{C})$ is \emph{primitive-stable} if there exist constants $\lambda\geqslant 1$ and $\epsilon \geqslant 0$ such that $\tau_{x,\rho}$ maps each primitive geodesic to a $(\lambda,\epsilon)$-quasi geodesic in $\mathbb{H}^3$.
\end{definition}

The definition is independent of the choice of base point $x$.\\

The ratio of singular values of matrices in $\mathrm{PSL}(2,\mathbb{C})$ are well-defined (See Remark \ref{genLiegroup}). Notice that if we choose the base point $x$ to be the point fixed by $\mathrm{PU}(2)\subset \mathrm{PSL}(2,\mathbb{C})$, then $d(x,\rho(\gamma)\cdot x)=\log\dfrac{\sigma_1(\rho(\gamma))}{\sigma_2(\rho(\gamma))}$. Thus $\rho$ is primitive-stable if and only if there exist constants $\lambda\geqslant 1$ and $\epsilon \geqslant 0$ such that $$\log\dfrac{\sigma_{1}(\rho(\eta_i ))}{\sigma_{2}(\rho(\eta_i))} \geqslant \lambda^{-1} i - \epsilon,$$ for any primitive geodesic ray $\{ \eta_i \}_{i=0}^{\infty}$ with $\eta_0=\id$ in $F_n$. This motivates the following generalization of primitive-stable representations to higher rank Lie groups by Guéritaud--Guichard--Kassel--Wienhard \cite{GGKW}. See also Kim--Kim \cite{KK} and Kim--Tan--Zhang \cite{KTZ} for some study of this type of representations.

\begin{definition}\label{defhrps}
    A representation $\rho:F_n\rightarrow \mathrm{PGL}(d,\mathbb{R})$ is \emph{$k$-primitive-stable} if there exist constants $C,C'>0$, such that 
    $$\log\dfrac{\sigma_{d-k}(\rho(\eta_i ))}{\sigma_{d-k+1}(\rho(\eta_i))} \geqslant C i - C',$$
    for any primitive geodesic ray $\{ \eta_i \}_{i=0}^{\infty}$ with $\eta_0=\id$ in $F_n$.
\end{definition}

Notice that the collection of all elements along a primitive geodesic ray $\{ \eta_i \}_{i=0}^{\infty}$ with $\eta_0=\id$ in $F_n$ is precisely the set $(F_n)^+_P$. Thus, $\rho$ is $k$-primitive-stable if and only if it is $k$-dominated on $(F_n)^+_P$. \\

\subsection{Directed Anosov representations}\label{directedanosov}\ \\

Kim--Tan--Zhang \cite{KTZ} introduced the notion of directed Anosov representations, which they used to construct primitive-stable representations of order two free groups to higher rank Lie groups.\\

Let $\Gamma$ be a hyperbolic group, $S_0$ a finite generating set and $S=S_0\cup S_0^{-1}$. Suppose that $S_0\ne S_0^{-1}$.

\begin{definition}\label{defda}
    A representation $\rho:\Gamma \rightarrow\mathrm{PGL}_d(\mathbb{R})$ is a \emph{$(k,S_0)$-directed Anosov representation} if there exist constants $C,C'>0$ such that $$\log\dfrac{\sigma_{d-k}(\rho(\eta_i ))}{\sigma_{d-k+1}(\rho(\eta_i))} \geqslant C i - C',$$ for any geodesic ray $\{ \eta_i \}_{i=0}^{\infty}$ in $\Gamma$ with $\eta_0=\id$ and $\eta_i^{-1}\eta_{i+1}\in S_0$ for any $i\in \mathbb{N}$. 
\end{definition}

We denote $$G=\{ l:\mathbb{R}\rightarrow \Cay(\Gamma, S)\ | \ l\ \text{is a geodesic with}\ l(0)=\id,\ l(n)^{-1}l(n+1)\in S_0,\ \forall n\in\mathbb{Z}\ \}.$$ We set $P_{S_0}$ to be the closure of $\Gamma\cdot \{ (l(+\infty),l(-\infty))\in \partial^{(2)}\Gamma\ |\ l\in G \ \}$. In our context, a $(k,S_0)$-directed Anosov representation is a $k$-dominated representation on $\Gamma_{P_{S_0}}^+$.\\

Here is a simple example for this type of representations.

\begin{example}\label{simplestexample}
    Let $\rho:\mathbb{Z}\rightarrow \mathrm{GL}(3,\mathbb{R})$ be a representation defined by 
    $\rho(n)=\left(\begin{matrix}
        4^n & 0 & 0 \\
        0 & \frac{1}{2^n} & 0 \\
        0 &  0 &\frac{1}{2^n} \\
    \end{matrix}\right),$
    for any $n\in \mathbb{Z}$.
    We consider $P=\{(+\infty,-\infty)\}$. Then $\rho$ is $1$-dominated on $\mathbb{N}_+=\mathbb{Z}_P^+$, hence $\rho$ is $1$-Anosov over $S_P$, but not $2$-Anosov over $S_P$.
\end{example}\ \\

\subsection{Pleated surfaces}\label{pleatedsurfaceeg}\ \\

Let $S$ be a closed hyperbolic surface and $\pi_1(S)$ denote the fundamental group of $S$. We call a subset $\lambda\subset S$ a \emph{(geodesic) lamination}, if $\lambda$ is a collection of disjoint simple geodesics in $S$ and whose union is a closed subset of $S$. We say a lamination $\lambda$ is \emph{maximal} if it is not properly contained in a larger lamination. It is not hard to see that $\lambda$ is maximal if and only if $S-\lambda$ is a union of ideal triangles.\\

Let $\lambda$ be a maximal geodesic lamination on $S$, and let $\widetilde{\lambda}$ be the lift of $\lambda$ to the universal covering $\widetilde{S}$ of $S$. The closure of a component of $\widetilde{S}-\widetilde{\lambda}$ is called a \emph{plaque}, which is an ideal triangle. Let $\widetilde{\Delta}_{\widetilde{\lambda}}$ denote the set of plaques. Let $P_\lambda$ denote the subset of $\partial^{(2)}\pi_1(S)$ consisting of the endpoints of geodesics in $\widetilde{\lambda}$, i.e., if $l$ is a (unoriented) geodesic in $\lambda$ with endpoints $x$ and $y$, then $(x,y),(y,x)\in P_\lambda$.

\begin{definition}[\cite{Bonahon} Section 7]\label{defpls}
    A pair $(\widetilde{f},\rho)$ is called a \emph{pleated surface with topological type $S$ and pleating locus $\lambda$}, if $\rho: \pi_1(S)\rightarrow \mathrm{PSL}(2,\mathbb{C})$ is a representation and $\widetilde{f}:\widetilde{S}\rightarrow \mathbb{H}^3$ is a $\rho$-equivariant, continuous map, such that the path metric $m$ induced by the pull back of the metric on $\mathbb{H}^3$ is a hyperbolic metric, $\widetilde{f}$ maps each geodesic in $\widetilde{\lambda}$ to a geodesic in $\mathbb{H}^3$ and $\widetilde{f}$ is totally geodesic (that is, $\widetilde{f}$ maps each geodesic segment to a geodesic segment)
    on $\widetilde{S}-\widetilde{\lambda}$.
\end{definition}

The pull back metric $m$ on $S$ may not be the same as the original hyperbolic metric. $\widetilde{f}$ may not be totally geodesic on $\widetilde{S}$ since it might be bent along the geodesics in $\widetilde{\lambda}$. Such bending can be described by \emph{transverse cocycle}.

\begin{definition}\label{transversecocycle}
    A transverse cocycle for $\lambda$ is a map $\alpha: \widetilde{\Delta}_{\widetilde{\lambda}} \times \widetilde{\Delta}_{\widetilde{\lambda}} \rightarrow \mathbb{R}/2\pi \mathbb{Z}$ with the following properties
\begin{itemize}
    \item[(1).] $\alpha(T_1,T_2)=\alpha(T_2,T_1)$ for any $T_1,T_2\in  \widetilde{\Delta}_{\widetilde{\lambda}}$;
    \item[(2).] $\alpha(T_1,T_2)=\alpha(T_1,T_3)+\alpha(T_3,T_2)$ if $T_1,T_2,T_3\in  \widetilde{\Delta}_{\widetilde{\lambda}}$ and $T_3$ separates $T_1$ and $T_2$, i.e.,
    $T_1$ and $T_2$ are in the different components of the closure of $\widetilde{S}-T_3$;
    \item[(3).] $\alpha$ is $\pi_1(S)$-invariant.
\end{itemize}
\end{definition}

Bonahon \cite{Bonahon} showed that any pleated surface $(\widetilde{f},\rho)$ with topological type $S$ and pleating locus $\lambda$ can be associated to a transverse cocycle $\alpha$, so that for two plaques $T_1$ and $T_2$, $\alpha(T_1,T_2)$ describes how `bent' $\widetilde{f}(T_2)$ is relative to $\widetilde{f}(T_1)$ in $\mathbb{H}^3$. If $T_1$ and $T_2$ share a common edge in $\widetilde{\lambda}$, $\alpha(T_1,T_2)$ is the external angle between $\widetilde{f}(T_1)$ and $\widetilde{f}(T_2)$. If $T_1$ and $T_2$ are separated by finite many plagues, $\alpha(T_1,T_2)$ is given by condition (2) in Definition \ref{transversecocycle}. See \cite{Bonahon} Section 7 for details. We refer to $\alpha$ as \emph{the bending cocycle for $(\widetilde{f},\rho)$}.\\

On the other hand, Bonahon \cite{Bonahon} shows that every transverse cocycle for $\lambda$ is the bending cocycle for some pleated surface $(\widetilde{f},\rho)$ with topological type $S$ and pleating locus $\lambda$.\\

In particular, let $\rho_0$ denote the Fuchsian representation $\rho_0:\pi_1(S)\rightarrow \mathrm{PSL}(2,\mathbb{R})\subset \mathrm{PSL}(2,\mathbb{C})$ and let $i:\mathbb{H}^2\rightarrow \mathbb{H}^3$ be the canonical embedding. Then $(i,\rho_0)$ is a pleated surface with topological type $S$ and pleating locus $\lambda$. The bending cocycle for $(i,\rho_0)$ is $0$.

\begin{theorem}\label{MMMZthm}
    Let $(\widetilde{f}_\alpha,\rho_\alpha)$ denote the pleated surface with topological type $S$ and pleating locus $\lambda$ associated to the transverse cocycle $\alpha$ for $\lambda$. Then $\rho_\alpha$ is $1$-Anosov over $S_{P_\lambda}$, which is uniform for any $\alpha$, i.e., the constants in Definition \ref{DefAnosovoverSp} is independent to the choice of $\alpha$.
\end{theorem}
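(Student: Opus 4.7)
The plan is to verify the equivalent condition $(3)$ in Theorem \ref{MT1}: we will prove each $\rho_\alpha$ is $1$-dominated on $\pi_1(S)^+_{P_\lambda}$, with constants that do not depend on the transverse cocycle $\alpha$. Since pleated surfaces are defined up to isometry of $\mathbb{H}^3$, we may normalize each representative $(\widetilde{f}_\alpha,\rho_\alpha)$ so that $\widetilde{f}_\alpha(\widetilde{p}_0)=x_0$, where $\widetilde{p}_0\in\widetilde{S}$ is a chosen base point and $x_0\in\mathbb{H}^3$ is the basepoint associated to the standard Hermitian form on $\mathbb{C}^2$. For this choice of $x_0$, the identity
\[
d_{\mathbb{H}^3}\bigl(x_0,\rho_\alpha(\gamma)x_0\bigr)=\log\frac{\sigma_1(\rho_\alpha(\gamma))}{\sigma_2(\rho_\alpha(\gamma))}
\]
holds for every $\gamma\in\pi_1(S)$, so it suffices to establish a uniform linear lower bound on $d_{\mathbb{H}^3}(x_0,\rho_\alpha(\gamma)x_0)$ in terms of $|\gamma|$ whenever $\gamma\in\pi_1(S)^+_{P_\lambda}$.

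Fix such a $\gamma$ and a geodesic $g:\mathbb{R}\to\Cay(\pi_1(S),S)$ with $g(0)=\id$, $\gamma=g(n)$ for some positive integer $n$, and $(g(+\infty),g(-\infty))\in P_\lambda$. By the definition of $P_\lambda$, the ideal endpoints $g(\pm\infty)$ coincide with the ideal endpoints of some leaf $\ell$ of $\widetilde{\lambda}\subset\widetilde{S}$. The orbit map $\eta\mapsto\eta\widetilde{p}_0$ is an $(L_0,\epsilon_0)$-quasi-isometry from $\Cay(\pi_1(S),S)$ to $(\widetilde{S},d_{\mathbb{H}^2})$ with $(L_0,\epsilon_0)$ depending only on $S$, so the orbit image of $g$ is a quasi-geodesic in $\widetilde{S}$ with the same ideal endpoints as $\ell$. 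By the fellow traveler property (Proposition \ref{FTP}) there is a universal constant $R>0$ such that $\widetilde{p}_0$ and $\gamma\widetilde{p}_0$ both lie within $d_{\mathbb{H}^2}$-distance $R$ of $\ell$; let $\widetilde{p}_0'$ and $(\gamma\widetilde{p}_0)'$ denote their nearest-point projections onto $\ell$.

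Definition \ref{defpls} ensures the pull-back path metric of $\mathbb{H}^3$ via $\widetilde{f}_\alpha$ is a hyperbolic metric on $\widetilde{S}$, so $\widetilde{f}_\alpha$ is $1$-Lipschitz, and it restricts to an isometric embedding of $\ell$ onto a geodesic of $\mathbb{H}^3$. Two applications of the triangle inequality then give
\begin{align*}
d_{\mathbb{H}^3}\bigl(x_0,\rho_\alpha(\gamma)x_0\bigr)
&= d_{\mathbb{H}^3}\bigl(\widetilde{f}_\alpha(\widetilde{p}_0),\widetilde{f}_\alpha(\gamma\widetilde{p}_0)\bigr)\\
&\geq d_{\mathbb{H}^3}\bigl(\widetilde{f}_\alpha(\widetilde{p}_0'),\widetilde{f}_\alpha((\gamma\widetilde{p}_0)')\bigr)-2R\\
&= d_{\mathbb{H}^2}\bigl(\widetilde{p}_0',(\gamma\widetilde{p}_0)'\bigr)-2R\\
&\geq d_{\mathbb{H}^2}\bigl(\widetilde{p}_0,\gamma\widetilde{p}_0\bigr)-4R\geq L_0^{-1}|\gamma|-\epsilon_0-4R.
\end{align*}
Every constant in the final bound depends only on $S$ and $\lambda$, never on $\alpha$, so each $\rho_\alpha$ is $1$-dominated on $\pi_1(S)^+_{P_\lambda}$ with uniform constants, and Theorem \ref{MT1} concludes the argument.

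The principal obstacle is securing uniformity in $\alpha$, which is dispatched by isolating the two $\alpha$-independent features of $\widetilde{f}_\alpha$ used above: being globally $1$-Lipschitz and being an isometric embedding on each leaf of $\widetilde{\lambda}$; both follow directly from Definition \ref{defpls}. Intuitively, bending along $\widetilde{\lambda}$ may shrink distances between points lying on different plaques, but never those measured along a single leaf, so the lower bound coming from the leaf survives every possible bending.
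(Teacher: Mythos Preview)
The paper does not prove this statement; it attributes the result to forthcoming work \cite{MMMZ}, so there is no in-paper argument to compare against. Your strategy is sound, but there is a gap concerning which metric on $\widetilde{S}$ you are using. Definition~\ref{defpls} only guarantees that the pull-back path metric $m_\alpha$ is \emph{some} hyperbolic metric, and the paper explicitly notes it may differ from the original one; the $1$-Lipschitz property you invoke is automatic only from $(\widetilde{S},m_\alpha)$ to $\mathbb{H}^3$, while the quasi-isometry constants $(L_0,\epsilon_0)$ and the fellow-travelling constant $R$ refer to the original metric. If these were computed in $m_\alpha$ instead they could vary with $\alpha$, and uniformity would be lost. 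What rescues the argument is that the $\mathbb{R}/2\pi\mathbb{Z}$-valued cocycles of Definition~\ref{transversecocycle} are \emph{bending} cocycles: Bonahon's construction maps each plaque isometrically onto an ideal triangle without altering how plaques are glued along $\widetilde{\lambda}$, so in fact $m_\alpha$ coincides with the original metric for every $\alpha$. This is standard (see \cite{Bonahon}) but it is exactly what makes all of your inequalities refer to one and the same metric, and it should be stated explicitly rather than derived from Definition~\ref{defpls}, which does not imply it.

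A second, smaller point: Theorem~\ref{MT1} asserts $(1)\Leftrightarrow(3)$ for a single representation and says nothing about uniformity across a family. Tracing the proof of Theorem~\ref{mainthm1}, the $\rho$-dependent inputs (the bi-Lipschitz constant between $\|\cdot\|_0$ and a $\Gamma$-invariant metric on $\widetilde{E}_{\rho_\alpha}$, and the constants of Lemma~\ref{conjextending}) are all controlled by $\max_{s\in S}\|\rho_\alpha(s)^{\pm1}\|$. Once the $1$-Lipschitz property for the fixed metric is in hand, equivariance gives $d_{\mathbb{H}^3}(x_0,\rho_\alpha(s)x_0)\le d_{\mathbb{H}^2}(\widetilde{p}_0,s\widetilde{p}_0)$, bounding these norms independently of $\alpha$; this step should be spelled out rather than absorbed into the citation of Theorem~\ref{MT1}.
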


This result is proved in an upcoming paper by Maloni--Martone--Mazzoli--Zhang \cite{MMMZ}, who use it to generalize the notion of pleated surfaces to higher rank Lie groups.\\

\bibliographystyle{amsplain}
\bibliography{reference.bib}

\end{document}